\newcounter{countercheck}[subsection]
\theoremstyle{plain}
\newtheorem{theorem}[countercheck]{Theorem}
\newtheorem{proposition}[countercheck]{Proposition}
\newtheorem{lemma}[countercheck]{Lemma}
\newtheorem{corollary}[countercheck]{Corollary}
\theoremstyle{definition}
\newtheorem{definition}[countercheck]{Definition}
\newtheorem{convention}[countercheck]{Convention}
\theoremstyle{remark}
\renewcommand{\phi}{\varphi}
\renewcommand{\epsilon}{\varepsilon}
\newcommand{\NNN}{\mathbb{N}_0}
\newcommand{\NN}{\mathbb{N}}
\newcommand{\Cp}{\mathcal{C}_{\epsilon}}
\newcommand{\Cm}{\mathcal{C}_{-\epsilon}}
\newcommand{\C}{\mathcal{C}}
\renewcommand{\P}{\mathcal{P}}
\newcommand{\Cbf}{\textbf{\textup{C}}}
\DeclareMathOperator{\proj}{proj}
\DeclareMathOperator{\co}{\textup{(co)}}
\numberwithin{equation}{section} 
\title{Isometries of wall-connected twin buildings}
\author{Sebastian \textit{Bischof}\footnote{email: sebastian.bischof@math.uni-giessen.de} \and Bernhard \textit{M\"uhlherr}\footnote{email: bernhard.m.muehlherr@math.uni-giessen.de} \\ \\
	Mathematisches Institut, Arndtstra\ss e 2, 35392 Gie\ss en, Germany}
\date{\today}
\begin{document}

\maketitle

\begin{abstract}
	We introduce the notion of a wall-connected twin building and show that the local-to-global principle holds for these twin buildings.
	As each twin building satisfying Condition $\co$ (introduced in \cite{MR95}) is wall-connected, we obtain a strengthening of the main result of \cite{MR95} that covers also the thick irreducible affine twin buildings of rank at least $3$.
	
	\medskip \noindent \textbf{Keywords} Twin buildings, Local-to-global principle, affine RGD-systems
		
	\medskip \noindent \textbf{Mathematics Subject Classification} 20E42, 51E24
\end{abstract}

\section{Introduction}

In \cite{Ti74} Tits gave a complete classification of all thick irreducible spherical buildings of rank at least $3$.
The decisive tool in this classification is the extension theorem for local isometries
between two spherical buildings (Theorem $4.1.2$ in loc.\,cit.). Inspired by the paper
\cite{Ti87} on Kac-Moody groups over fields, Ronan and Tits introduced twin buildings (see \cite[$88/89$ and $89/90$]{Ti73-00} and \cite{Ti92}). Twin buildings
are natural generalizations of spherical buildings because there is an opposition relation on the set of its chambers. It was conjectured in \cite{Ti92}
that the extension theorem for local isometries can be generalized to
$2$-spherical twin buildings. This conjecture was confirmed in \cite{MR95} for
twin buildings that satisfy a technical condition (called Condition $\co$ in \cite{MR95})
and it was shown that Condition $\co$ is "almost always" satisfied.
More precisely, if the twin building in question has no rank two residues
isomorphic to $B_2(2),G_2(2),G_2(3)$ or $^2F_4(2)$, then the extension theorem
holds (see Corollary $1.7$ in \cite{MR95}).

It seemed first that Condition $\co$ was just a convenient
assumption for making the ideas in \cite{MR95} work, but that
the extension theorem should hold without any additional hypothesis. After a while, however,
there were serious doubts about this (see Paragraph $2.3$ in \cite[$97/98$]{Ti73-00})
and the question about the validity of the local-to-global principle for all $2$-spherical
buildings remained an open problem. It is particularly unsatisfying that it is even not known whether the extension theorem holds for twin buildings of affine type. It was observed by Abramenko and Mühlherr that the arguments in \cite{MR95} can be modified to treat some cases in which Condition $\co$ does not hold. But these modifications were not good enough to prove the extension theorem for all affine twin buildings.

In this paper we introduce a condition for twin buildings that we call {\sl wall-connectedness}. It is inspired by the content of \cite{DMVM11} and its definition (given in Definition \ref{Definition: wall-connected}) is somewhat technical. It turns out that each twin building satisfying Condition $\co$ is wall-connected but that the converse is not true. The main goal of this paper is to provide the following improvement of the main result in \cite{MR95}.

\medskip
\noindent \textbf{Main result:} The extension theorem holds for wall-connected twin buildings.

\medskip For a precise statement of our main result we refer to Corollary \ref{Cor: Main result}. It turns out that all $3$-spherical twin buildings and all irreducible affine twin buildings of rank at least $3$ are wall-connected (see Section \ref{Section: Wall-connected twin buildings}). Thus, our main result yields the following:

\medskip
\noindent \textbf{Corollary:} The extension theorem holds for all $3$-spherical twin buildings and all irreducible affine twin buildings of rank at least $3$.

\subsection*{Content of the paper}

In Section $2$ we fix notation and state some results about parallel residues in a building. In Section $3$ we give the definition of a twin building and prove some elementary facts which we will need later. This section is also to be understood as a preliminary section. The first part of the next section is concerned with compatible path's as defined in \cite{DMVM11}. In the second part of this section we define $P$-compatible path's which generalizes compatible path's to the situation of twin buildings. Later on we prove some result about them. In particular, our proof of the extension theorem relies heavily on Proposition \ref{projectiontransitive}. At the end of this section we give the definition of a wall-connected twin building. Section $5$ is divided in three parts. In the first part we state the definition of an isometry and some basic results about them. A crucial lemma is Lemma \ref{Mu97Lemma4.3}. We will use this lemma in combination with Proposition \ref{projectiontransitive} to prove the extension theorem for wall-connected twin buildings. The main step is Proposition \ref{Propphiswelldefined}.

The rest of the paper is concerned essentially with the fact that affine twin buildings of rank at least $3$ are wall-connected.

\renewcommand{\abstractname}{Acknowledgement}
\begin{abstract}
	We thank Richard Weiss for communicating us the proof of Proposition \ref{Wurzelgruppen}.
\end{abstract}

\section{Preliminaries}

\subsection*{Coxeter system}

A \textit{Coxeter system} is a pair $(W, S)$ consisting of a group $W$ and a set $S \subseteq W$ of generators of $W$ of order $2$ such that the set $S$ and the relations $\left(st \right)^{m_{st}}$ for all $s,t\in S$ constitute a presentation of $W$, where $m_{st}$ denotes the order of $st$ in $W$ for all $s, t \in S$.

Let $(W, S)$ be a Coxeter system and let $\ell: W \to \NN, w \mapsto \min\{ k\in \NNN \mid \exists s_1, \ldots, s_k \in S: w = s_1 \cdots s_k \}$ denote the corresponding length function. The \textit{Coxeter diagram} corresponding to $(W, S)$ is the labeled graph $(S, E(S))$, where $E(S) = \{ \{s, t \} \mid m_{st}>2 \}$ and where each edge $\{s,t\}$ is labeled by $m_{st}$ for all $s, t \in S$. We call the Coxeter diagram \textit{irreducible}, if the underlying graph is connected, and we call it \textit{$2$-spherical}, if $m_{st} <\infty$ for all $s,t \in S$. The \textit{rank} of a Coxeter diagram is the cardinality of the set of its vertices. It is well-known that the pair $(\langle J \rangle, J)$ is a Coxeter system (cf. \cite[Ch. IV, §$1$ Th\'eor\`eme $2$]{Bo68}). We call $J \subseteq S$ \textit{spherical} if $\langle J \rangle$ is finite. Given a spherical subset $J$ of $S$, there exists a unique element of maximal length in $\langle J \rangle$, which we denote by $r_J$ (cf. \cite[Corollary $2.19$]{AB08}); moreover, $r_J$ is an involution.

\begin{convention}
	From now on we let $(W, S)$ be a Coxeter system of finite rank.
\end{convention}

\begin{lemma}\label{NearlyConditionF}
	Let $(W, S)$ be a Coxeter system and let $w\in W, s, t\in S$ be such that $\ell(sw) = \ell(w) -1 = \ell(wt)$. Then either $\ell(swt) = \ell(w) -2$ or $swt = w$.
\end{lemma}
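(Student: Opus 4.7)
The plan is to split on the value of $\ell(swt)$. Since left (or right) multiplication by a single generator changes length by exactly $\pm 1$, we have $\ell(swt)=\ell(s\cdot wt)\in\{\ell(wt)-1,\ell(wt)+1\}=\{\ell(w)-2,\ell(w)\}$. If $\ell(swt)=\ell(w)-2$ the conclusion is immediate, so the substantive case is $\ell(swt)=\ell(w)$, where I must show $swt=w$. My strategy for this case is to invoke the Strong Exchange Condition and track which letter is deleted.

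Write $n:=\ell(w)$. Since $\ell(sw)<\ell(w)$, standard exchange (applied on the left) produces a reduced expression $w=s\cdot a_{1}\cdots a_{n-1}$. Then $a_{1}\cdots a_{n-1}$ is reduced and represents $sw$; moreover, because we are assuming $\ell(swt)=\ell(sw)+1$, the word $a_{1}\cdots a_{n-1}t$ is a reduced expression for $swt$.

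Now apply the Strong Exchange Condition to left multiplication by $s$ on this reduced expression for $swt$: since $\ell(s\cdot swt)=\ell(wt)=n-1<n=\ell(swt)$, the element $wt=s\cdot swt$ must be obtained by deleting exactly one letter from $a_{1}\cdots a_{n-1}t$. There are two possibilities. If the deleted letter is the final $t$, then $wt=a_{1}\cdots a_{n-1}=sw$, which after right multiplication by $t$ gives $w=swt$, as desired. If instead the deleted letter is some $a_{i}$, then $wt=a_{1}\cdots\widehat{a_{i}}\cdots a_{n-1}t$, hence $w=a_{1}\cdots\widehat{a_{i}}\cdots a_{n-1}$; but this is a word in $S$ of length $n-2$, contradicting $\ell(w)=n$. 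So only the first alternative occurs and $swt=w$.

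The only real obstacle is the case $\ell(swt)=\ell(w)$, and here the key insight is that once we have a reduced expression for $swt$ starting from the chosen reduced expression of $w$, the Strong Exchange Condition leaves just two combinatorial options, one of which is ruled out on length grounds. The rest is bookkeeping.
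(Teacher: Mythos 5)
Your proof is correct, but it takes a genuinely different route from the paper. You build a reduced expression $w=sa_1\cdots a_{n-1}$, observe that $a_1\cdots a_{n-1}t$ is then reduced for $swt$ in the critical case $\ell(swt)=\ell(w)$, and apply the (Strong) Exchange Condition to left multiplication by $s$; ruling out deletion of an $a_i$ on length grounds forces deletion of $t$ and hence $swt=w$. (Since $s\in S$, the ordinary Exchange Condition already suffices here --- you do not actually need the strong form.) The paper instead sets $w':=sw$ and observes that $w'$ satisfies $\ell(sw')=\ell(w')+1=\ell(w't)$, so Condition (F) (the Folding Condition of \cite{AB08}, p.~79) applies directly to $w'$ and gives either $\ell(sw't)=\ell(w')+2$ or $sw't=w'$; since $\ell(sw't)=\ell(wt)=\ell(w')$, the second alternative holds, i.e.\ $wt=sw$. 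Your approach is more self-contained, descending to the Exchange Condition itself, whereas the paper's is shorter once Condition (F) is taken as a black box --- the lemma is essentially a one-line corollary of (F) after the substitution $w\mapsto sw$. Both are sound.
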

\begin{proof}
	We put $w' := sw$. Then $\ell(sw') = \ell(w) = \ell(w') +1$. We assume that $\ell(swt) \neq \ell(w) -2$. Then $\ell(swt) = \ell(w)$ and hence $\ell(w't) = \ell(swt) = \ell(w) = \ell(w') +1$. Using Condition $(\mathbf{F})$ of \cite{AB08} on page $79$ we obtain either $\ell(sw't) = \ell(w') +2$ or $sw't = w'$. Since $\ell(sw't) = \ell(wt) = \ell(w) -1 = \ell(w')$ we have $wt = sw't = w' =  sw$ and the claim follows.
\end{proof}

\subsection*{Buildings}

A \textit{building of type $(W, S)$} is a pair $\Delta = (\C, \delta)$ where $\C$ is a non-empty set and where $\delta: \C \times \C \to W$ is a \textit{distance function} satisfying the following axioms, where $x, y\in \C$ and $w = \delta(x, y)$:
\begin{enumerate}[label=(Bu\arabic*), leftmargin=*]
	\item $w = 1_W$ if and only if $x=y$;
	
	\item if $z\in \C$ satisfies $s := \delta(y, z) \in S$, then $\delta(x, z) \in \{w, ws\}$, and if, furthermore, $\ell(ws) = \ell(w) +1$, then $\delta(x, z) = ws$;
	
	\item if $s\in S$, there exists $z\in \C$ such that $\delta(y, z) = s$ and $\delta(x, z) = ws$.
\end{enumerate}
The \textit{rank} of $\Delta$ is the rank of the underlying Coxeter system. The elements of $\C$ are called \textit{chambers}. Given $s\in S$ and $x, y\in \C$, then $x$ is called \textit{$s$-adjacent} to $y$, if $\delta(x, y) = \langle s \rangle$. The chambers $x, y$ are called \textit{adjacent}, if they are $s$-adjacent for some $s\in S$. A \textit{gallery} joining $x$ and $y$ is a sequence $(x = x_0, \ldots, x_k = y)$ such that $x_{l-1}$ and $x_l$ are adjacent for any $1 \leq l \leq k$; the number $k$ is called the \textit{length} of the gallery. For any two chambers $x$ and $y$ we put $\ell_{\Delta}(x, y) := \ell(\delta(x, y))$.

Given a subset $J \subseteq S$ and $x\in \C$, the \textit{$J$-residue} of $x$ is the set $R_J(x) := \{y \in \C \mid \delta(x, y) \in \langle J \rangle \}$. Each $J$-residue is a building of type $(\langle J \rangle, J)$ with the distance function induced by $\delta$ (cf. \cite[Corollary $5.30$]{AB08}). A \textit{residue} is a subset $R$ of $\C$ such that there exist $J \subseteq S$ and $x\in \C$ with $R = R_J(x)$. It is a basic fact that the subset $J$ is uniquely determined by the set $R$; it is called the \textit{type} of $R$ and the \textit{rank} of $R$ is defined to be the cardinality of $J$. A residue is called \textit{spherical} if its type is a spherical subset of $S$. Let $R$ be a spherical residue of type $J$ and let $x, y \in R$. Then $x, y$ are called \textit{opposite in $R$} if $\delta(x, y) = r_J$. If $R=\C$, we say for short that $x, y$ are \textit{opposite}. If $(W, S)$ is spherical we call two residues $R_1$ of type $J_1$ and $R_2$ of type $J_2$ \textit{opposite} if $R_1$ contains a chamber opposite to a chamber of $R_2$ and if $J_1 = r_S J_2 r_S$.

A \textit{panel} is a residue of rank $1$. An \textit{$s$-panel} is a panel of type $\{s\}$ for some $s\in S$. The building $\Delta$ is called \textit{thick}, if each panel of $\Delta$ contains at least three chambers.

Given $x\in \C$ and $k \in \NNN$ then $E_k(x)$ denotes the union of all residues of rank at most $k$ containing $x$. It is a fact, that the set $E_k(x)$ determines the chamber $x$ uniquely, if $k < \vert S \vert$.

For $x\in \C$ and any $J$-residue $R \subseteq \C$ there exists a unique chamber $z\in R$ such that $\ell_{\Delta}(x, y) = \ell_{\Delta}(x, z) + \ell_{\Delta}(z, y)$ holds for any $y\in R$ (cf. \cite[Proposition $5.34$]{AB08}). The chamber $z$ is called the \textit{projection of $x$ onto $R$} and is denoted by $\proj_R x$. Moreover, if $z = \proj_R x$ we have $\delta(x, y) = \delta(x, z) \delta(z, y)$ for each $y\in R$.

Let $R, Q$ be two residues. Then we define the mapping $\proj^R_Q: R \to Q, x \mapsto \proj_Q x$ and we put $\proj_Q R := \{ \proj_Q r \mid r\in R \}$. The residues $R, Q$ are called \textit{parallel} if $\proj_R Q = R$ and $\proj_Q R = Q$.

\begin{lemma}\label{parallelresidues}
	Two residues $R, Q$ are parallel if and only if $\proj^R_Q$ and $\proj^Q_R$ are bijections inverse to each other.
\end{lemma}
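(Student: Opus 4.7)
The backward direction is immediate from the definitions: if $\proj_Q^R$ and $\proj_R^Q$ are mutually inverse bijections, then $\proj_Q R = \proj_Q^R(R) = Q$ and $\proj_R Q = \proj_R^Q(Q) = R$, so $R$ and $Q$ are parallel.

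For the forward direction, I would assume parallelism and show that $\proj_R(\proj_Q x) = x$ for every $x \in R$; the symmetric statement then gives the lemma, since the surjection $\proj_Q^R\colon R \to Q$ becomes injective, hence bijective, with inverse $\proj_R^Q$. Fix $x \in R$ and set $y := \proj_Q x$ and $z := \proj_R y$. By $\proj_R Q = R$ there exists $q \in Q$ with $\proj_R q = x$, giving $\ell_\Delta(q,r) = \ell_\Delta(q,x) + \ell_\Delta(x,r)$ for every $r \in R$. Applied to this $q \in Q$, the defining property of $y = \proj_Q x$ yields $\ell_\Delta(q,x) = \ell_\Delta(q,y) + \ell_\Delta(y,x)$. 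Substituting and combining with the triangle inequality $\ell_\Delta(q,r) \leq \ell_\Delta(q,y) + \ell_\Delta(y,r)$ produces $\ell_\Delta(y,x) + \ell_\Delta(x,r) \leq \ell_\Delta(y,r)$, and since the reverse inequality is automatic, this sandwiches to the key identity
\[
\ell_\Delta(y,r) = \ell_\Delta(y,x) + \ell_\Delta(x,r) \qquad \text{for all } r \in R.
\]

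Setting $r = z$ in this identity gives $\ell_\Delta(y,z) = \ell_\Delta(y,x) + \ell_\Delta(x,z)$, while the defining property of $z = \proj_R y$ with $r = x$ gives $\ell_\Delta(y,x) = \ell_\Delta(y,z) + \ell_\Delta(z,x)$. Adding these forces $\ell_\Delta(x,z) = 0$, hence $x = z$, as required. The only subtle point is recognising that both parallelism conditions must be used simultaneously: the auxiliary chamber $q$ supplied by $\proj_R Q = R$ is precisely what upgrades the generic triangle inequality into the rigid additive identity that pins down $z = x$; without it, one obtains only a one-sided inequality that is insufficient to identify the two projections as mutual inverses.
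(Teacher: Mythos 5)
Your proof is correct, and it takes a genuinely different route from the paper: the paper dispatches the nontrivial implication by citing \cite[Proposition 21.10]{MPW15}, whereas you give a self-contained argument directly from the gate property of projections and the triangle inequality on gallery distances. The core of your argument is to use one half of the parallelism hypothesis ($\proj_R Q = R$) to pull a chamber $q \in Q$ back over $x$, then chain the two gate identities with the triangle inequality to squeeze out the identity $\ell_\Delta(y,r) = \ell_\Delta(y,x) + \ell_\Delta(x,r)$ for all $r \in R$; since this is precisely the gate property characterizing $\proj_R y$, you could actually stop there and invoke uniqueness of the projection chamber to conclude $\proj_R y = x$ immediately — your final two equations and "adding" step are a small detour that reproduce the same conclusion. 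You are also right that the other half of parallelism, $\proj_Q R = Q$, is needed to know $\proj_Q^R$ is surjective (and symmetrically to run the mirror argument), so both hypotheses genuinely enter. The tradeoff: the paper's citation is shorter, but your proof makes the lemma self-contained, which is a real gain for a reader who does not have \cite{MPW15} at hand.
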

\begin{proof}
	One implication is obvious; the other is \cite[Proposition $21.10$]{MPW15}.
\end{proof}

\begin{lemma}\label{DMVMLemma14}
	Let $P_1$ and $P_2$ be two parallel panels of type $s_1$ and $s_2$, respectively. Then $s_2 = w^{-1}s_1 w$, where $w := \delta(x, \proj_{P_2} x)$ does not depend on the choice of $x$ in $P_1$.
	
	Conversely, if $x$ and $y$ are chambers with $\delta(x, y) = w$, where $w$ satisfies $s_2 = w^{-1} s_1 w$ and $\ell(s_1 w) = \ell(w) +1$, then the $s_1$-panel on $x$ is parallel to the $s_2$-panel on $y$.
\end{lemma}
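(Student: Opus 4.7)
The plan is to use the standard panel-projection fact: for a chamber $a$ and a panel $P$ of type $s$ with $a \notin P$, setting $u := \delta(a, \proj_P a)$ one has $\ell(us) = \ell(u) + 1$, and $\proj_P a$ is the unique chamber of $P$ at distance $u$ from $a$ while every other chamber of $P$ lies at distance $us$. Combined with (Bu2), (Bu3), and Condition~$(\mathbf{F})$ of \cite{AB08} --- which states that $\ell(sw) = \ell(ws') = \ell(w) + 1$ forces $\ell(sws') = \ell(w) + 2$ or $sws' = w$ --- this will handle both directions.

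For the first direction, pick $x \in P_1$ and set $y := \proj_{P_2} x$, $w := \delta(x, y)$. Parallelism and Lemma~\ref{parallelresidues} give $\proj_{P_1} y = x$. Choose $x' \in P_1 \setminus \{x\}$, so $\delta(x, x') = s_1$; set $y' := \proj_{P_2} x'$, and Lemma~\ref{parallelresidues} again gives $y' \neq y$ and $\proj_{P_1} y' = x'$. The panel-projection fact applied to $y$ relative to $P_1$ yields $\delta(x', y) = s_1 w$ with $\ell(s_1 w) = \ell(w) + 1$, and applied to $x$ relative to $P_2$ yields $\delta(x, y') = w s_2$ with $\ell(w s_2) = \ell(w) + 1$. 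Writing $w' := \delta(x', y')$ and using the same fact for $x'$ relative to $P_2$, one obtains $\delta(x', y) = w' s_2$ with $\ell(w' s_2) = \ell(w') + 1$; comparing with $\delta(x', y) = s_1 w$ forces $w' s_2 = s_1 w$, hence $\ell(w') = \ell(w)$ and $w' = s_1 w s_2$. Condition~$(\mathbf{F})$ then leaves only $s_1 w s_2 = w$, i.e.\ $s_2 = w^{-1} s_1 w$; since $w' = w$, the value of $w$ does not depend on the choice of $x$.

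For the converse, let $P_1$ be the $s_1$-panel on $x$ and $P_2$ the $s_2$-panel on $y$. From $\ell(w s_2) = \ell(s_1 w) = \ell(w) + 1$ and (Bu2), every $y'' \in P_2 \setminus \{y\}$ satisfies $\delta(x, y'') = w s_2$ and every $x' \in P_1 \setminus \{x\}$ satisfies $\delta(x', y) = s_1 w$, so $\proj_{P_2} x = y$ and $\proj_{P_1} y = x$. For such an $x'$, axiom (Bu3) produces some $y' \in P_2$ with $\delta(y, y') = s_2$ and $\delta(x', y') = (s_1 w) s_2 = w$ (using $s_1 w = w s_2$); uniqueness of the shorter-distance chamber in a panel identifies $y' = \proj_{P_2} x'$. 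Thus $\phi := \proj^{P_1}_{P_2}$ satisfies $\delta(x', \phi(x')) = w$ for every $x' \in P_1$. The analogous argument, using the dual identities $s_2 w^{-1} = w^{-1} s_1$ and $\ell(s_2 w^{-1}) = \ell(w^{-1}) + 1$, shows that $\psi := \proj^{P_2}_{P_1}$ satisfies $\delta(\psi(y''), y'') = w$ for every $y'' \in P_2$. Uniqueness of the shorter-distance chamber now gives $\psi \circ \phi = \mathrm{id}_{P_1}$ and $\phi \circ \psi = \mathrm{id}_{P_2}$, so Lemma~\ref{parallelresidues} delivers parallelism.

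The main obstacle is the length bookkeeping in the first direction: the equality $w' s_2 = s_1 w$ alone only bounds $\ell(w') \in \{\ell(w), \ell(w) + 2\}$, and obtaining $\ell(w') = \ell(w)$ --- which is what allows Condition~$(\mathbf{F})$ to collapse to $s_1 w s_2 = w$ --- requires invoking parallelism a second time, through $\proj_{P_2} x' = y'$.
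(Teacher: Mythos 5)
Your proof is correct. The paper's proof of this lemma is a bare citation to \cite[Lemma 14]{DMVM11}, but your argument --- two applications of the gate property to deduce $\ell(w') = \ell(w)$ and then Condition~$(\mathbf{F})$ to force $s_1ws_2 = w$ for the forward direction, and an explicit check that $\proj^{P_1}_{P_2}$ and $\proj^{P_2}_{P_1}$ are mutually inverse for the converse --- is the standard one and mirrors the paper's own proof of the twin analogue, Lemma \ref{GeneralizationDMVM11Lemma14}.
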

\begin{proof}
	This is \cite[Lemma $14$]{DMVM11}.
\end{proof}

Let $P_1$ and $P_2$ be two parallel panels. Then, by the previous lemma, $\delta(x, \proj_{P_2} x)$ does not depend on the choice of $x\in P_1$. Thus we define $\delta(P_1, P_2) := \delta(x, \proj_{P_2} x)$, where $x$ is a chamber in $P_1$.

\begin{lemma}\label{DMVMProposition4}
	Let $R$ be a spherical $J$-residue and let $R_1, R_2$ be two residues in $R$, which are opposite in $R$. Then $R_1$ and $R_2$ are parallel.
\end{lemma}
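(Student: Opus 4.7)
The plan is to apply Lemma \ref{parallelresidues}: it suffices to show that $\proj^{R_1}_{R_2}$ and $\proj^{R_2}_{R_1}$ are mutually inverse bijections. Working inside $R$, I may replace $(W,S)$ with $(\langle J\rangle, J)$ and assume $R = \C$. Write $w_0 := r_J$ for the longest element, and let $J_1, J_2$ denote the types of $R_1, R_2$, so that $J_2 = w_0 J_1 w_0$.

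The central computation is to show that for every $x \in R_1$ one has $\delta(x, \proj_{R_2} x) = r_{J_1} w_0$, and symmetrically $\delta(z, \proj_{R_1} z) = r_{J_2} w_0$ for every $z \in R_2$. At the anchor pair $x_0 \in R_1$, $y_0 \in R_2$ with $\delta(x_0, y_0) = w_0$, this is immediate from the gate property of the projection: $\delta(x_0, \proj_{R_2} x_0)$ is the shortest element of the left coset $w_0 \langle J_2 \rangle = \langle J_1 \rangle w_0$ (using $J_2 = w_0 J_1 w_0$ and $w_0^2 = 1_W$), and the standard length formula $\ell(u w_0) = \ell(w_0) - \ell(u)$ for $u \in W$ identifies this shortest element as $r_{J_1} w_0$.

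To propagate the formula from $x_0$ to an arbitrary $x \in R_1$, I would induct on $\ell(\delta(x_0, x))$. The inductive step moves $x$ along an $s$-panel of $R_1$ (for $s \in J_1$) and tracks how $\proj_{R_2} x$ moves along the corresponding $s'$-panel of $R_2$, where $s' := w_0 s w_0 \in J_2$; the computations use repeated application of axiom (Bu2) together with the commutation identity $w_0 s = s' w_0$. This is the technical heart of the argument. An alternative, cleaner route is to choose an apartment $\Sigma$ of $R$ containing $x_0$ and $x$, to carry out the computation inside the finite Coxeter complex $\Sigma \cap R$, and to invoke the fact that projections in $\Sigma$ agree with those in the ambient building.

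Granting the uniform formula, fix $x \in R_1$ and set $z := \proj_{R_2} x$. Then
\[
\delta(z, x) = (r_{J_1} w_0)^{-1} = w_0 r_{J_1} = r_{J_2} w_0,
\]
the last equality using $r_{J_2} = w_0 r_{J_1} w_0$. Let $x^* := \proj_{R_1} z \in R_1$. The gate property at $z$ gives $\delta(z, x) = \delta(z, x^*) \cdot \delta(x^*, x)$ with lengths additive; since the symmetric statement applied at $z$ yields $\delta(z, x^*) = r_{J_2} w_0 = \delta(z, x)$, we must have $\delta(x^*, x) = 1_W$ and hence $x^* = x$. Thus $\proj_{R_1} \circ \proj_{R_2} = \mathrm{id}_{R_1}$, and by symmetry $\proj_{R_2} \circ \proj_{R_1} = \mathrm{id}_{R_2}$, so Lemma \ref{parallelresidues} concludes that $R_1$ and $R_2$ are parallel.
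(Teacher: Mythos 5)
The paper proves this lemma by a one-line citation to Proposition $21.24$ of \cite{MPW15}; you instead give a self-contained argument. Your overall strategy is the standard one and is sound: establish $\delta(x,\proj_{R_2}x)=r_{J_1}w_0$ uniformly in $x\in R_1$, then use additivity of lengths from the gate property to force $\proj_{R_1}\circ\proj_{R_2}=\mathrm{id}_{R_1}$, and invoke Lemma~\ref{parallelresidues}. The coset computation at the anchor pair and your concluding paragraph are both correct as written.

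The step you flag as ``the technical heart'' does need more care than you give it. Your apartment alternative is misstated: an apartment of $R$ containing $x_0$ and $x$ need not meet $R_2$ at all, so it need not contain $\proj_{R_2}x$; one would instead need an apartment containing $x$ and a chamber of $R_2$, and would still have to verify that it meets $R_1$ and $R_2$ in opposite residues. The (Bu2)-tracking induction can be pushed through, but there is a shorter route using only tools you already invoke. By the gate property applied to $y_0$ and $R_1$ (with $p:=\proj_{R_1}y_0$), one has $\delta(x,y_0)\in\langle J_1\rangle\,\delta(p,y_0)=\langle J_1\rangle\,\delta(x_0,y_0)=\langle J_1\rangle w_0$ for every $x\in R_1$. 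By the gate property applied to $x$ and $R_2$, the set $\delta(x,R_2):=\{\delta(x,y):y\in R_2\}$ is the left coset $\delta(x,y_0)\langle J_2\rangle$, hence is contained in $\langle J_1\rangle w_0\langle J_2\rangle=\langle J_1\rangle w_0$, which is itself a single left coset of $\langle J_2\rangle$ because $\langle J_2\rangle=w_0\langle J_1\rangle w_0$. Thus $\delta(x,R_2)=\langle J_1\rangle w_0$ independently of $x$, and its minimal element $\delta(x,\proj_{R_2}x)$ equals $r_{J_1}w_0$ for every $x\in R_1$, which is exactly the uniformity you needed.
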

\begin{proof}
	This is a consequence of \cite[Proposition $21.24$]{MPW15}.
\end{proof}

\begin{lemma}\label{DMVMLemma18}
	Let $R$ be a rank $2$ residue and let $P, Q$ be two parallel panels contained in $R$. Then either $P = Q$ or $R$ is spherical and $P$ and $Q$ are opposite in $R$. In particular, if $P \neq Q$ and $J$ is the type of $R$, we have $\ell(\delta(P, Q)) = \ell(r_J) -1$.
\end{lemma}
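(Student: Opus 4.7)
The plan is to reduce everything to a computation inside the rank-2 residue $R$ of type $J = \{s, t\}$, translating parallelism into an identity in the dihedral Coxeter group $\langle J \rangle$. The central object is $w := \delta(P, Q) \in \langle J \rangle$, which by Lemma \ref{DMVMLemma14} satisfies $s_2 = w^{-1} s_1 w$, where $s_i$ is the type of the respective panel. I may assume $s_1 = s$, and that $P \neq Q$ (the case $P = Q$ being trivial).

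First I would verify two preparatory facts. (i) $w \neq 1$: otherwise $\proj_Q x = x$ forces $P \subseteq Q$, and since $s_2 = s = s_1$ both are the same $s$-panel. (ii) $\ell(sw) = \ell(w) + 1$: picking $x \in P$ and $y = \proj_Q x$, Lemma \ref{parallelresidues} gives $\proj_P y = x$, so for $x' \in P$ with $\delta(x, x') = s$ one has $\ell_\Delta(x', y) > \ell(w)$; then axiom (Bu2) forces $\delta(x', y) = sw$ with $\ell(sw) = \ell(w) + 1$. Hence $w$ has a reduced expression which is an alternating word $tst\cdots$ of length $k := \ell(w) \geq 1$.

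The heart of the argument is the identity $sw = ws_2$ viewed inside $\langle J \rangle$. Since $\ell(sw) = k + 1$ and $\ell(ws_2) \in \{k-1, k+1\}$, the equality of the two sides forces $\ell(ws_2) = k+1$, so $w$ does not end in $s_2$; a quick parity check on the alternating expression for $w$ then pins down $s_2$ so that $ws_2$ is itself an alternating word of length $k + 1$. Thus $sw$ and $ws_2$ are alternating reduced words of the same length $k + 1$ starting with different letters. In a dihedral group, this can occur only if $\langle J \rangle$ is finite, the common length equals $m_{st}$, and both expressions represent $r_J$. Hence $R$ is spherical, $k + 1 = m_{st}$, and in particular $\ell(w) = m_{st} - 1 = \ell(r_J) - 1$.

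For the opposition claim, with the same chambers, $\delta(sx, y) = sw = r_J$, so $sx \in P$ and $y \in Q$ are opposite chambers of $R$; the type condition $s_1 = r_J s_2 r_J$ then follows from $r_J = ws_2$ together with $r_J^2 = 1$. I expect the main obstacle to be the dihedral-rigidity step, namely the classical fact that two distinct alternating reduced expressions for the same element in a rank-2 Coxeter group can arise only as the two expressions for $r_J$ in the spherical case. Once this is in hand, the rest of the proof is essentially bookkeeping.
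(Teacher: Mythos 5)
Your proof is correct. The paper itself gives no argument for this lemma — it simply cites \cite[Lemma~$18$]{DMVM11} — so you have in effect supplied the missing proof. Your route is the natural one: translate parallelism into the algebraic identity $sw = ws_2$ via Lemma \ref{DMVMLemma14}, establish $\ell(sw) = \ell(w)+1$ from the gate property and Lemma \ref{parallelresidues}, reduce to the observation that $sw$ and $ws_2$ are two distinct alternating reduced words for the same element of the dihedral group $\langle J\rangle$, and invoke the classical uniqueness of reduced expressions in dihedral groups (unique except for the longest element when $\langle J\rangle$ is finite) to conclude $\langle J\rangle$ is finite and $sw = ws_2 = r_J$, whence $\ell(w) = \ell(r_J)-1$.

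Two small presentational points, neither a gap. First, in establishing $\ell(ws_2) = \ell(w)+1$ you wrote that equality of $sw$ and $ws_2$ forces this; it is cleaner to note that $sw = ws_2$ is simply the identity $s_2 = w^{-1}sw$ from Lemma \ref{DMVMLemma14} rearranged, so both words automatically represent the same element and have equal length, and then the parity argument shows $ws_2$ is alternating. Second, for the type condition $s_1 = r_J s_2 r_J$ you say it follows from $r_J = ws_2$ and $r_J^2 = 1$; strictly you also use $ws_2 = sw$ (equivalently $s_2 = w^{-1}sw$) — e.g. $r_J s_2 r_J = (ws_2)s_2(ws_2) = w\cdot ws_2 = w\cdot sw = (ws)w$ and $(sw)^2 = 1$ gives $wsw = s$. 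This is all immediate, but worth spelling out since the reader needs to see which identities are being combined.
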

\begin{proof}
	This is \cite[Lemma $18$]{DMVM11}.
\end{proof}

A subset $\Sigma \subseteq \C$ is called \textit{convex} if $\proj_P c \in \Sigma$ for every $c\in \Sigma$ and every panel $P \subseteq \C$ which meets $\Sigma$. A subset $\Sigma \subseteq \C$ is called \textit{thin} if $P \cap \Sigma$ contains exactly two chambers for every panel $P \subseteq \C$ which meets $\Sigma$. An \textit{apartment} is a non-empty subset $\Sigma \subseteq \C$, which is convex and thin.

\section{Twin buildings}

Let $\Delta_+ = (\C_+, \delta_+), \Delta_- = (\C_-, \delta_-)$ be two buildings of the same type $(W, S)$. A \textit{codistance} (or a \textit{twinning}) between $\Delta_+$ and $\Delta_-$ is a mapping $\delta_* : \left( \C_+ \times \C_- \right) \cup \left( \C_- \times \C_+ \right) \to W$ satisfying the following axioms, where $\epsilon \in \{+,-\}, x\in \Cp, y\in \Cm$ and $w=\delta_*(x, y)$:
\begin{enumerate}[label=(Tw\arabic*), leftmargin=*]
	\item $\delta_*(y, x) = w^{-1}$;
	
	\item if $z\in \Cm$ is such that $s := \delta_{-\epsilon}(y, z) \in S$ and $\ell(ws) = \ell(w) -1$, then $\delta_*(x, z) = ws$;
	
	\item if $s\in S$, there exists $z\in \Cm$ such that $\delta_{-\epsilon}(y, z) = s$ and $\delta_*(x, z) = ws$.
\end{enumerate}
A \textit{twin building of type $(W, S)$} is a triple $\Delta = (\Delta_+, \Delta_-, \delta_*)$ where $\Delta_+ = (\C_+, \delta_+), \Delta_- = (\C_-, \delta_-)$ are buildings of type $(W, S)$ and where $\delta_*$ is a twinning between $\Delta_+$ and $\Delta_-$.

\begin{lemma}\label{AB08Lemma5.139}
	Given $\epsilon \in \{+,-\}, x \in \Cp, y \in \Cm$ and let $w = \delta_*(x, y)$. Then for any $y' \in \Cm$ with $\delta_{-\epsilon}(y, y') =s \in S$ we have $\delta_*(x, y') \in \{w, ws\}$.
\end{lemma}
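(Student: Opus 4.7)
The plan is to perform a case analysis based on the comparison of $\ell(ws)$ with $\ell(w)$. In the easy case $\ell(ws) = \ell(w) - 1$, axiom (Tw2) applies directly to $x, y, y'$ (using $\delta_{-\epsilon}(y, y') = s$) and yields $\delta_*(x, y') = ws$, which is what is required.

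The interesting case is $\ell(ws) = \ell(w) + 1$. Writing $w' := \delta_*(x, y')$, the goal reduces to showing $w' \in \{w, ws\}$. Here I would first try to run (Tw2) in the reverse direction, using $\delta_{-\epsilon}(y', y) = s$: if $\ell(w's) = \ell(w') - 1$, then (Tw2) applied to $x, y', y$ gives $\delta_*(x, y) = w's$, hence $w = w's$, and therefore $w' = ws$.

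This leaves the symmetric subcase in which both $\ell(ws) = \ell(w) + 1$ and $\ell(w's) = \ell(w') + 1$, so that (Tw2) is inapplicable in either direction; this is the main obstacle. To get around it, I would invoke the existence axiom (Tw3) for the pair $(x, y')$ and generator $s$ to produce a chamber $z \in \Cm$ with $\delta_{-\epsilon}(y', z) = s$ and $\delta_*(x, z) = w's$. If $z = y$, then $w = \delta_*(x, z) = w's$, and so $w' = ws$. Otherwise $z \neq y$, and since both $z$ and $y$ lie in the $s$-panel of $y'$ we must have $\delta_{-\epsilon}(z, y) = s$; because $\ell((w's)s) = \ell(w') = \ell(w's) - 1$, a second application of (Tw2) now to the triple $(x, z, y)$ yields $\delta_*(x, y) = w'ss = w'$, i.e.\ $w = w'$. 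This exhausts all cases, and the only nontrivial step is the fabrication of the auxiliary chamber $z$ via (Tw3) when (Tw2) provides no direct information.
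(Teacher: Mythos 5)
Your proof is correct and complete. The paper itself only cites Lemma~5.139 of Abramenko--Brown without reproducing a proof, and your argument is essentially the standard one from that reference: the length-based case split, with (Tw2) disposing of the cases $\ell(ws)<\ell(w)$ and $\ell(w's)<\ell(w')$, and (Tw3) supplying an auxiliary chamber $z$ in the $s$-panel of $y'$ to settle the remaining case via a second application of (Tw2). One small remark: in the final subcase ($\ell(ws)>\ell(w)$ and $\ell(w's)>\ell(w')$) the alternative $z=y$ is in fact impossible, since $w=w's$ would force $\ell(w)=\ell(w')+1$ while $w'=ws$ would force $\ell(w')=\ell(w)+1$; your treatment of that branch is therefore vacuous but harmless, and the argument is logically sound as written.
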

\begin{proof}
	This follows similar to \cite[Lemma $5.139$]{AB08}.
\end{proof}

We put $\C := \C_+ \cup \C_-$ and define the distance function $\delta: \C \times \C \to W$ by setting $\delta(x, y) := \delta_+(x, y)$ (resp. $\delta_-(x, y), \delta_*(x, y)$) if $x,y\in \C_+$ (resp. $x, y \in \C_-, (x, y) \in \Cp \times \Cm$ for some $\epsilon \in \{+,-\}$).

Given $x, y \in \C$ then we put $\ell(x, y) := \ell(\delta(x, y))$. If $\epsilon \in \{+,-\}$ and $x, y \in \Cp$, then we put $\ell_{\epsilon}(x, y) := \ell(\delta_{\epsilon}(x, y))$ and for $(x, y) \in \Cp \times \Cm$ we put $\ell_*(x, y) := \ell(\delta_*(x, y))$.

Let $\epsilon \in \{+,-\}$. For $x\in \Cp$ we put $x^{\mathrm{op}} := \{ y\in \Cm \mid \delta_*(x, y) = 1_W \}$. It is a direct consequence of (Tw$1$) that $y\in x^{\mathrm{op}}$ if and only if $x\in y^{\mathrm{op}}$ for any pair $(x, y) \in \Cp \times \Cm$. If $y\in x^{\mathrm{op}}$ then we say that $y$ is \textit{opposite} to $x$ or that \textit{$(x, y)$ is a pair of opposite chambers}.

A \textit{residue} (resp. \textit{panel}) of $\Delta$ is a residue (resp. panel) of $\Delta_+$ or $\Delta_-$; given a residue $R\subseteq \C$ then we define its type and rank as before. Two residues $R,T \subseteq \C$ in different halves are called \textit{opposite} if they have the same type and if there exists a pair of opposite chambers $(x, y)$ such that $x\in R, y\in T$. The twin building $\Delta$ is called \textit{thick} if $\Delta_+$ and $\Delta_-$ are thick.

Let $\epsilon \in \{+,-\}$, let $J$ be a spherical subset of $S$ and let $R$ be a $J$-residue of $\Delta_{\epsilon}$. Given a chamber $x\in \Cm$ then there exists a unique chamber $z\in R$ such that $\ell_*(x, y) = \ell_*(x, z) - \ell_{\epsilon}(z, y)$ holds for any chamber $y\in R$ (cf. \cite[Lemma $5.149$]{AB08}). The chamber $z$ is called the \textit{projection of $x$ onto $R$} and is denoted by $\proj_R x$. Moreover, if $z = \proj_R x$ we have $\delta_*(x, y) = \delta_*(x, z)\delta_{\epsilon}(z, y)$ for each $y\in R$.

\begin{lemma}\label{residueprojectionabsorbtion}
	Let $R_1 \subseteq R_2$ be two spherical residues of $\Delta$ and let $x\in \C$. Then $\proj_{R_1} x = \proj_{R_1} \proj_{R_2} x$.
\end{lemma}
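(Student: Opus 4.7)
The plan is to exhibit $\proj_{R_1} x$ explicitly as $z_1 := \proj_{R_1}\!\bigl(\proj_{R_2} x\bigr)$ and then invoke uniqueness of the projection. Set $z_2 := \proj_{R_2} x$. Since $R_1 \subseteq R_2$, both residues lie in the same half of $\Delta$, say in $\Delta_\epsilon$, and in particular $z_1 \in R_1 \subseteq R_2$. I would split the argument according to whether $x$ lies in the same half as the residues or in the opposite half, since the ``gate property'' takes slightly different shapes in the two cases.

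Suppose first that $x \in \mathcal{C}_\epsilon$. For every $y \in R_1 \subseteq R_2$ the gate property of $z_2 = \proj_{R_2} x$ (in the building $\Delta_\epsilon$) yields $\ell_\epsilon(x,y) = \ell_\epsilon(x,z_2) + \ell_\epsilon(z_2, y)$; applying this also to $y = z_1$ gives $\ell_\epsilon(x,z_1) = \ell_\epsilon(x,z_2) + \ell_\epsilon(z_2, z_1)$. On the other hand, the gate property of $z_1 = \proj_{R_1} z_2$ within $R_1$ gives $\ell_\epsilon(z_2, y) = \ell_\epsilon(z_2, z_1) + \ell_\epsilon(z_1, y)$. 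Substituting the last two identities into the first, the term $\ell_\epsilon(z_2, z_1)$ cancels and we obtain $\ell_\epsilon(x,y) = \ell_\epsilon(x,z_1) + \ell_\epsilon(z_1,y)$ for every $y \in R_1$. By the uniqueness clause in the definition of the projection this forces $z_1 = \proj_{R_1} x$, as required.

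If instead $x \in \mathcal{C}_{-\epsilon}$, the same bookkeeping works with one modification: the decomposition along $R_2$ is the twin-building gate property with a minus sign, namely $\ell_*(x,y) = \ell_*(x,z_2) - \ell_\epsilon(z_2, y)$ for $y \in R_2$ (the ``moreover'' clause in the definition of $\proj_{R_2} x$). Combined with the ordinary (additive) gate property of $z_1$ inside $R_1$, and after specializing once to $y = z_1$, one reaches $\ell_*(x,y) = \ell_*(x, z_1) - \ell_\epsilon(z_1, y)$ for all $y \in R_1$, and uniqueness again gives $z_1 = \proj_{R_1} x$.

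I do not expect any real obstacle: the statement is simply transitivity of the closest-point projection onto nested spherical residues. The only point that requires care is keeping the signs straight in the codistance formula and noticing that the intermediate chamber $z_1$ itself lies in $R_2$, so that the decomposition through $z_2$ may be applied at $y = z_1$ to eliminate the $\ell_\epsilon(x, z_2)$ term.
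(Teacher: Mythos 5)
Your proof is correct and follows essentially the same route as the paper: apply the gate property at $R_2$, then the gate property of $z_1 = \proj_{R_1} z_2$ inside $R_1$, and finally reuse the gate property at $R_2$ at the chamber $y = z_1$ to collapse the first two terms into $\ell(x,z_1)$, then conclude by uniqueness. The only cosmetic difference is that the paper treats both halves at once via a $\pm$ sign in the codistance formula, whereas you split the two cases explicitly; the content is identical.
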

\begin{proof}
	Let $r\in R_1$. Following \cite[Proposition $2$]{DS87} we compute the following, where we have $'+'$ if $x, R_1, R_2$ are in the same half, and $'-'$ if $x$ and $R_1, R_2$ are in different halves: 
	\begin{align*}
	\ell(x, r) &= \ell(x, \proj_{R_2} x) \pm \ell(\proj_{R_2} x, r) \\
	&= \ell(x, \proj_{R_2} x) \pm \left( \ell(\proj_{R_2} x, \proj_{R_1} \proj_{R_2} x) + \ell(\proj_{R_1} \proj_{R_2} x, r) \right) \\
	&= \ell(x, \proj_{R_1} \proj_{R_2} x) \pm \ell(\proj_{R_1} \proj_{R_2} x, r)
	\end{align*}
	Since this holds for any $r\in R_1$, the uniqueness of $\proj_{R_1} x$ yields the claim.
\end{proof}

\begin{lemma}\label{Mu97Lemma3.4}
	Let $\epsilon \in \{+, -\}$ and let $R \subseteq \C_{\epsilon}$ and $T \subseteq \C_{-\epsilon}$ be two opposite residues of spherical type $J \subseteq S$. Then for any pair $(x, y) \in R \times T$ the following are equivalent:
	\begin{enumerate}[label=(\roman*), leftmargin=*]
		\item $\proj_T x = y$;
		\item $\delta_*(x, y) = r_J$;
		\item $\proj_R y = x$.
	\end{enumerate}
\end{lemma}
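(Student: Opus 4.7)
My plan is to prove the equivalence (ii) $\Leftrightarrow$ (i) directly and deduce (ii) $\Leftrightarrow$ (iii) by a symmetry argument based on (Tw$1$) and the fact that $r_J$ is an involution. The key intermediate fact, which drives the converse implication, is the existence, for every $x \in R$, of some $\tilde y \in T$ with $\delta_*(x, \tilde y) = r_J$; uniqueness of the projection $\proj_T x$ then closes the loop.

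For (ii) $\Rightarrow$ (i), assume $\delta_*(x, y) = r_J$, fix $y' \in T$, and pick a minimal gallery $y = y_0, \ldots, y_k = y'$ in $T$ with $\delta_{-\epsilon}(y_{i-1}, y_i) = t_i \in J$. Since $r_J$ is the longest element of $\langle J \rangle$ and every prefix of $t_1 \cdots t_k$ is reduced, one has $\ell(r_J t_1 \cdots t_i) = \ell(r_J) - i$, so the length-decreasing hypothesis of (Tw$2$) is met at each step and induction yields $\delta_*(x, y_i) = r_J t_1 \cdots t_i$. This is the defining projection identity $\delta_*(x, y') = \delta_*(x, y) \delta_{-\epsilon}(y, y')$ with length subtraction, so $y = \proj_T x$. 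The same argument after exchanging $(R, \epsilon, x)$ and $(T, -\epsilon, y)$ --- legitimate since $\delta_*(y, x) = r_J^{-1} = r_J$ by (Tw$1$) --- gives (ii) $\Rightarrow$ (iii).

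For (i) $\Rightarrow$ (ii), I pick the opposite pair $(x_0, y_0) \in R \times T$ supplied by the hypothesis and a minimal gallery $x_0 = c_0, \ldots, c_k = x$ in $R$ with $\delta_\epsilon(c_{i-1}, c_i) = s_i \in J$. Setting $d_0 := y_0$, axiom (Tw$3$) applied to $(c_{i-1}, d_{i-1})$ supplies $d_i \in T$ with $\delta_{-\epsilon}(d_{i-1}, d_i) = s_i$ and $\delta_*(c_{i-1}, d_i) = s_i$, and axiom (Tw$2$), applied with the two halves swapped, then forces $\delta_*(c_i, d_i) = s_i \cdot s_i = 1_W$ via $\ell(s_i s_i) = 0 = \ell(s_i) - 1$. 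Thus $y_x := d_k \in T$ is opposite $x$. Setting $\tilde y := \proj_T x$, applying the projection identity to $y_x$ shows $\delta_*(x, \tilde y) \in \langle J \rangle$, while applying it to the chamber of $T$ opposite to $\tilde y$ inside $T$ gives $\ell_*(x, \tilde y) \geq \ell(r_J)$ by non-negativity of $\ell_*$. Hence $\delta_*(x, \tilde y) = r_J$. By (ii) $\Rightarrow$ (i) this identifies $\tilde y$ with $\proj_T x$, so if (i) holds then $y = \tilde y$ and (ii) follows. The implication (iii) $\Rightarrow$ (ii) is obtained by the same symmetry argument.

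I expect the main obstacle to be the gallery construction of $(d_i)$: the hypothesis of opposition only provides one opposite pair inside $R \times T$, and promoting this to an opposite chamber for every $x \in R$ requires a coordinated use of (Tw$2$) and (Tw$3$). The sphericity of $J$ enters crucially at two points: it ensures each $s_i \in J$ keeps the $d_i$ inside $T$, and it makes $r_J$ available as the maximum-length element of $\langle J \rangle$ against which the codistance $\delta_*(x, \tilde y)$ is compared.
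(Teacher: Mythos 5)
Your proof is correct. The paper gives no argument of its own for this lemma --- its proof is simply the citation \cite[Lemma~$3.4$]{BCM21} --- so there is no internal proof to compare against. Your route is the natural one: (ii) $\Rightarrow$ (i) is a (Tw$2$)-induction along a minimal gallery in $T$ starting at $y$, using $\ell(r_J w) = \ell(r_J) - \ell(w)$ for $w\in\langle J\rangle$; (ii) $\Rightarrow$ (iii) follows by symmetry from (Tw$1$) and $r_J^{-1}=r_J$. For the converse you first manufacture, for every $x\in R$, a chamber $y_x\in T$ with $\delta_*(x,y_x)=1_W$ via a (Tw$3$)/(Tw$2$) zigzag along a gallery in $R$ starting from the opposite pair $(x_0,y_0)$; then the projection identity applied to $y_x$ puts $\delta_*(x,\proj_T x)$ in $\langle J\rangle$, and applying it to the chamber of $T$ opposite $\proj_T x$ inside $T$ gives $\ell_*(x,\proj_T x)\geq\ell(r_J)$, pinning the codistance to $r_J$. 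That squeeze is exactly right. One cosmetic point: the final appeal to ``(ii) $\Rightarrow$ (i)'' is unnecessary --- you defined $\tilde y:=\proj_T x$, so having shown $\delta_*(x,\proj_T x)=r_J$ you already have (i) $\Rightarrow$ (ii) directly.
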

\begin{proof}
	This is \cite[Lemma $3.4$]{BCM21}.
\end{proof}

Let $\epsilon \in \{+,-\}$ and let $R \subseteq \Cp, T \subseteq \Cm$ be spherical residues. Then we define the mapping $\proj^R_T: R \to T, x \mapsto \proj_T x$ and we put $\proj_T R := \{ \proj_T r \mid r\in R \}$. The residues $R$ and $T$ are called \textit{parallel} if $\proj_R T = R$ and $\proj_T R = T$.

\begin{lemma}\label{AB08Exercise5.168+projbijection+Mu97Lemma3.5}
	Let $\epsilon \in \{+,-\}, R \subseteq \Cp, T \subseteq \Cm$ be two spherical residues.
	\begin{enumerate}[label=(\alph*), leftmargin=*]
		\item $\proj_T R$ is a spherical residue in $T$.
		
		\item $R$ and $T$ are parallel if and only if $\proj^R_T$ and $\proj^T_R$ are bijections inverse to each other.
		
		\item  $\proj_R T$ and $\proj_T R$ are parallel.
		
		\item If $R$ and $T$ are opposite then they are parallel.
	\end{enumerate} 
\end{lemma}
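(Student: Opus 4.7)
The plan is to treat the four parts in order, using (a) and (b) to set up (c), and reducing (d) to Lemma \ref{Mu97Lemma3.4}.

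For (a), I would show that $\proj_T R$ is closed under suitable adjacencies and is therefore a residue of $T$. Given $r_1, r_2 \in R$ with $\delta_\epsilon(r_1, r_2) = s$, Lemma \ref{AB08Lemma5.139} applied to $y := \proj_T r_1$ gives $\delta_*(r_2, y) \in \{\delta_*(r_1, y), \delta_*(r_1, y)s\}$; analysing the projection formula then shows that $\proj_T r_2$ either equals $\proj_T r_1$ or is adjacent to it in a panel of $T$ of a controlled type. Chaining this along a gallery of $R$ identifies $\proj_T R$ as a union of panels of $T$, hence a residue, whose type is contained in the (spherical) type of $T$ and is therefore spherical.

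For (b), one direction is immediate. For the converse, set $f := \proj^R_T$ and $g := \proj^T_R$. Using the two projection formulas together with (Tw1), a short computation yields the monotonicity identity
\begin{equation*}
\ell_*(x', y') = \ell_*(x, y) + \ell_\epsilon(x', x) + \ell_{-\epsilon}(y', y),
\end{equation*}
where $y := f(x)$, $x' := g(y)$, and $y' := f(x')$. Iterating $x \mapsto g(f(x))$ produces a non-decreasing sequence of codistance lengths, bounded above since $R$ and $T$ are finite, so it stabilizes; once stable the identity forces $x_{n+1} = x_n$ and $y_{n+1} = y_n$. Parallelism makes $f$ and $g$ surjective between finite sets, hence bijective, and a bijection of a finite set whose every orbit is eventually constant is the identity; this gives $g \circ f = \mathrm{id}_R$ and symmetrically $f \circ g = \mathrm{id}_T$.

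For (c), put $R' := \proj_R T$ and $T' := \proj_T R$; by (a) these are spherical residues. For $x \in R'$ the chamber $\proj_T x$ lies in $T'$, and uniqueness of projection within a sub-residue gives $\proj_{T'} x = \proj_T x$ (both lie in $T'$ and satisfy the defining factorization for all $t \in T' \subseteq T$); symmetrically $\proj_{R'} y = \proj_R y$ for $y \in T'$. Thus parallelism of $R', T'$ reduces to the surjectivities $f(R') = T'$ and $g(T') = R'$, which follow from the stabilization argument of (b) applied to sequences inside $R' \times T'$ together with the fact that $R', T'$ are by construction the images of $g$ and $f$ from arbitrary starting chambers. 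For (d), given opposite $R, T$ of type $J$, take any $r \in R$ and set $y := \proj_T r$; Lemma \ref{Mu97Lemma3.4} then gives $\proj_R y = r$, so $r \in \proj_R T$; hence $\proj_R T = R$ and symmetrically $\proj_T R = T$. The main obstacle is part (c): the monotonicity argument produces stabilization but not the pointwise identity needed, and extracting the precise images $f(R') = T'$ and $g(T') = R'$ requires a careful combination of the stabilization with the uniqueness-of-projection identification $\proj_{T'} = \proj_T$ on $R'$.
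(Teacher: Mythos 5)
Your part (d) works and is a nice alternative to the paper's citation of \cite[Proposition $(4.3)$]{Ro00}: Lemma \ref{Mu97Lemma3.4} immediately gives $\proj_R\proj_T r = r$ for every $r\in R$, hence $\proj_R T = R$ and symmetrically $\proj_T R = T$. Your sketch of (a) is also a reasonable substitute for the paper's citation of \cite[Exercise $5.168$]{AB08}.

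The trouble is (b) and (c). Your monotonicity identity
\[
\ell_*(x', y') \;=\; \ell_*(x, y) + \ell_\epsilon(x', x) + \ell_{-\epsilon}(y', y)
\]
(for $y = \proj_T x$, $x' = \proj_R y$, $y' = \proj_T x'$) is correct, but it pushes in the wrong direction: it shows $\ell_*$ grows along the iteration, so it only tells you that the orbit stabilizes at \emph{some} fixed point, not that the starting chamber $x$ is itself fixed. To bridge that gap you invoke that ``$R$ and $T$ are finite'' — twice, once for the bound and once to turn surjectivity into bijectivity — but spherical residues are not finite sets in general: in a thick building over an infinite field every panel is already infinite. (The codistance lengths \emph{are} bounded, but because they all lie in a single double coset $W_J\,\delta_*(x_0,y_0)\,W_K$ with $J,K$ spherical, not because the residues are finite.) Without finiteness, surjectivity of $g\circ f$ does not give injectivity, and the eventual-stabilization of orbits does not give $g\circ f = \mathrm{id}$; you flag this honestly at the end of your (c), and the same issue already sinks (b).

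The paper avoids iteration altogether. Given $x\in\proj_R T$, pick $y\in T$ with $x = \proj_R y$ and close the loop in one pass:
\[
\ell_*(y,x) - \ell_\epsilon\bigl(x,\proj_R\proj_T x\bigr) = \ell_*\bigl(y,\proj_R\proj_T x\bigr) \ge \ell_*\bigl(\proj_T x,\proj_R\proj_T x\bigr) - \ell_{-\epsilon}(y,\proj_T x) \ge \ell_*(y,x),
\]
where the first step uses $x=\proj_R y$, the second is the triangle inequality, and the third uses $\proj_R\proj_T x = \proj_R(\proj_T x)$ together with $\ell_*(\proj_T x, x) - \ell_{-\epsilon}(y,\proj_T x) = \ell_*(y,x)$. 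This forces $\ell_\epsilon(x,\proj_R\proj_T x) = 0$, i.e.\ $\proj_R\proj_T x = x$, pointwise and with no finiteness or iteration. Both (b) and (c) then fall out: under parallelism $\proj_R T = R$ gives $\proj^T_R\circ\proj^R_T = \mathrm{id}_R$ (and symmetrically), and for (c) one combines the pointwise identity with Lemma \ref{residueprojectionabsorbtion} to replace $\proj_T,\proj_R$ by $\proj_{T'},\proj_{R'}$ on $R'=\proj_R T$, $T'=\proj_T R$. The key idea you are missing is to start the inequality chain from a chamber already known to be a projection from the other residue; your identity compares $\ell_*(x,\proj_T x)$ with $\ell_*(x',\proj_T x')$, which are maxima over $T$ from two different chambers and hence not comparable, whereas the paper compares two codistances from the \emph{same} chamber $y$, one of which is a maximum by the choice $x=\proj_R y$.
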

\begin{proof}
	Assertion $(a)$ is \cite[Exercise $5.168$]{AB08}. Let $x\in \proj_R T$. Then there exists $y\in T$ such that $x = \proj_R y$. Note that $\ell_*(y, x) = \ell_*(\proj_T x, x) - \ell_{-\epsilon}(y, \proj_T x)$. Since $\ell_*(c, d) \geq \ell_*(c, e) - \ell_{-\epsilon}(e, d)$ for any $c\in \C_{\epsilon}$ and $d, e\in \C_{-\epsilon}$ the following hold:
	\begin{align*}
		\ell_*(y, x) - \ell_{\epsilon}(x, \proj_R \proj_T x) &= \ell_*(y, \proj_R \proj_T x) \\
		&\geq \ell_*(\proj_T x, \proj_R \proj_T x) - \ell_{-\epsilon}(y, \proj_T x) \\
		&\geq \ell_*(\proj_T x, x) - \ell_{-\epsilon}(y, \proj_T x) \\
		&= \ell_*(y, x).
	\end{align*}
	This implies $\proj_R \proj_T x = x$ and the restriction of the projection mappings are bijections inverse to each other. 
	
	One implication in Assertion $(b)$ is obvious. For the other we note that $\proj_R T = R$ and $\proj_T R = T$ and we proved that the restriction of the projection mappings are bijections inverse to each other. Assertion $(c)$ is now a consequence of Assertion $(b)$ and Assertion $(d)$ is \cite[Proposition $(4.3)$]{Ro00}.
\end{proof}

\begin{lemma}\label{GeneralizationDMVMLemma13}
	Let $\epsilon \in \{+,-\}, P \subseteq \Cp, Q \subseteq \Cm$ be two panels. Then $P, Q$ are parallel if and only if $\vert \proj_P Q \vert \geq 2$.
\end{lemma}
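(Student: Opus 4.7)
My plan is to reduce both implications to Lemma \ref{AB08Exercise5.168+projbijection+Mu97Lemma3.5}, using the elementary observation that the only residues contained in a panel $P$ of type $\{s\}$ are the singletons and $P$ itself. This is because a residue inside $P$ has type a subset of $\{s\}$, hence type $\emptyset$ (a single chamber) or type $\{s\}$ (all of $P$).

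For the forward direction, assume $P$ and $Q$ are parallel. By Lemma \ref{AB08Exercise5.168+projbijection+Mu97Lemma3.5}(b), the maps $\proj^P_Q$ and $\proj^Q_P$ are mutually inverse bijections. In particular $\proj_P Q = P$, and since axiom (Bu3) applied to any chamber of $P$ produces a second chamber of $P$, we have $\vert \proj_P Q \vert = \vert P \vert \geq 2$.

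For the converse, suppose $\vert \proj_P Q \vert \geq 2$. By Lemma \ref{AB08Exercise5.168+projbijection+Mu97Lemma3.5}(a), $\proj_P Q$ is a spherical residue contained in $P$, so by the observation above, $\proj_P Q = P$. By Lemma \ref{AB08Exercise5.168+projbijection+Mu97Lemma3.5}(c), the residues $\proj_P Q$ and $\proj_Q P$ are parallel; applying part (b) of the same lemma to this pair yields a bijection between them, so $\vert \proj_Q P \vert = \vert \proj_P Q \vert \geq 2$. The same dichotomy argument applied inside $Q$ forces $\proj_Q P = Q$. Hence $P$ and $Q$ are parallel.

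There is really no obstacle here; the entire content of the statement has been packed into parts (a), (b), (c) of Lemma \ref{AB08Exercise5.168+projbijection+Mu97Lemma3.5}, and the only new input is the triviality that a residue inside a rank one residue is either a point or the whole panel. The lemma is essentially a twin-building restatement of \cite[Lemma $13$]{DMVM11}, with the analogous roles played by the projection structure theorem for spherical residues across the twinning.
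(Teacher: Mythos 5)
Your proof is correct and follows essentially the same route as the paper: both directions reduce to Lemma \ref{AB08Exercise5.168+projbijection+Mu97Lemma3.5}, using part (a) to see that $\proj_P Q$ is a residue inside the panel (hence a singleton or all of $P$), then part (c) to transport the cardinality bound to $\proj_Q P$ and repeat. The only difference is that you make explicit the trivial observation about residues contained in a panel and the easy forward direction, which the paper leaves implicit.
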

\begin{proof}
	We follow the ideas of \cite[Lemma $13$]{DMVM11}. If $P, Q$ are parallel, the claim follows. Therefore let $\vert \proj_P Q \vert \geq 2$. Since $\proj_P Q$ is a residue contained in $P$ by Lemma \ref{AB08Exercise5.168+projbijection+Mu97Lemma3.5}$(a)$, we have $\proj_P Q = P$. By Lemma \ref{AB08Exercise5.168+projbijection+Mu97Lemma3.5}$(c)$ the residues $\proj_Q P$ and $\proj_P Q = P$ are parallel. Thus we have $\vert \proj_Q P \vert = \vert P \vert \geq 2$. Using the same argument we obtain $\proj_Q P = Q$ and the panels $P$ and $Q$ are parallel.
\end{proof}

\begin{lemma}\label{GeneralizationDMVM11Lemma17}
	Let $\epsilon \in \{+,-\}$, let $P \subseteq \Cp, Q \subseteq \Cm$ be two parallel panels and let $R$ be a spherical residue containing $Q$. Then $\proj_R P$ is a panel parallel to both $P$ and $Q$.
\end{lemma}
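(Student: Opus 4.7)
The plan has three stages: first show that $P' := \proj_R P$ is parallel to $P$ and satisfies $\proj_Q P' = Q$; then show that $P'$ is a panel (the technical heart); finally deduce that $P'$ is parallel to $Q$.

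\textbf{Stage 1.} Since $Q \subseteq R$, we have $\proj_P Q \subseteq \proj_P R$, and the parallelism of $P$ and $Q$ gives $\proj_P Q = P$, so $P \subseteq \proj_P R$. By Lemma \ref{AB08Exercise5.168+projbijection+Mu97Lemma3.5}(a), $\proj_P R$ is a residue in the panel $P$, hence equals $P$. Lemma \ref{AB08Exercise5.168+projbijection+Mu97Lemma3.5}(c) then yields that $P = \proj_P R$ and $P' = \proj_R P$ are parallel; by (b), $\proj_R$ restricts to a bijection $P \to P'$. Applying Lemma \ref{residueprojectionabsorbtion} to $Q \subseteq R$ and any $p \in P$, we obtain $\proj_Q P = \proj_Q \proj_R P = \proj_Q P'$, so $\proj_Q P' = Q$ and $|P'| \geq |Q| \geq 2$.

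\textbf{Stage 2.} To show $P'$ has rank one, pick any two $s$-adjacent chambers $p_1, p_2 \in P$, set $p_i' := \proj_R p_i$ (distinct by the bijection), $w_i := \delta_*(p_i, p_i')$, and $v := \delta_{-\epsilon}(p_1', p_2')$. The projection formula applied at $p_i' = \proj_R p_i$ with $p_{3-i}' \in R$ yields $\delta_*(p_1, p_2') = w_1 v$ and $\delta_*(p_2, p_1') = w_2 v^{-1}$, each with the product being length-subtractive. On the other hand, since $p_1, p_2$ are $s$-adjacent in $\C_\epsilon$, Lemma \ref{AB08Lemma5.139} (applied in the twin formulation) forces $\delta_*(p_2, p_1') \in \{w_1, s w_1\}$ and $w_2 = \delta_*(p_2, p_2') \in \{w_1 v, s w_1 v\}$. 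The case $w_2 = w_1 v$ is excluded by symmetry: it would also impose $w_1 = w_2 v^{-1}$, so $\ell(w_1) = \ell(w_2) - \ell(v) = \ell(w_1) - 2 \ell(v)$ and hence $\ell(v) = 0$, contradicting $p_1' \neq p_2'$. So $w_2 = s w_1 v$. Matching $\ell(s w_1) = \ell(w_2 v^{-1}) = \ell(w_2) - \ell(v)$ with $\ell(s w_1) \in \{\ell(w_1) \pm 1\}$ and $\ell(w_2) = \ell(s w_1 v) \in \{\ell(w_1 v) \pm 1\} = \{\ell(w_1) - \ell(v) \pm 1\}$ leaves only the solution $\ell(v) = 1$. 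Hence $p_1', p_2'$ are adjacent in $\C_{-\epsilon}$.

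\textbf{Stage 3.} Since any two chambers of $P'$ are adjacent in $\C_{-\epsilon}$, $P'$ must lie in a single panel: if $p_1', p_2' \in P'$ are $s'$-adjacent and some $p_3' \in P'$ satisfies $\delta_{-\epsilon}(p_1', p_3') = s'' \neq s'$, then axiom (Bu2) applied to $p_2' \to p_1' \to p_3'$ forces $\delta_{-\epsilon}(p_2', p_3') \in \{s', s' s''\}$; the first would place $p_3'$ in the $s'$-panel through $p_1'$ (contradicting $s'' \neq s'$), while $s' s''$ has length $2$ (contradicting the adjacency of $p_2', p_3'$ established in Stage 2). Being a residue meeting this common panel in at least two chambers, $P'$ equals it. Finally, since $P'$ is a panel and $\proj_Q P' = Q$ has at least two elements, Lemma \ref{GeneralizationDMVMLemma13} yields $P' \parallel Q$; together with $P \parallel P'$ from Stage 1, this completes the proof. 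The main obstacle is the length bookkeeping in Stage 2, which juggles the codistance axioms in both halves of the twin building simultaneously.
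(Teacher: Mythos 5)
Your proof is correct in its essentials and, unlike the paper, which simply defers to \cite[Lemma $17$]{DMVM11} with the remark that the required building-theoretic ingredients have twin analogues in this paper, it is fully worked out. The real content is Stage 2: by comparing the length-subtractive projection formulas $\delta_*(p_1,p_2')=w_1v$ and $\delta_*(p_2,p_1')=w_2v^{-1}$ at $p_i'=\proj_R p_i$ against the one-step moves $\delta_*(p_2,\cdot)\in\{w_1\cdot,\ sw_1\cdot\}$ that follow from Lemma~\ref{AB08Lemma5.139} and (Tw1), you force $\ell(v)=1$, so $\proj_R P$ has rank one; Stages 1 and 3 then reduce everything to projection and absorption bookkeeping. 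This is a self-contained, elementary route and buys a proof independent of the argument in \cite{DMVM11}.

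One correction is needed at the very end. Lemma~\ref{GeneralizationDMVMLemma13} is stated only for panels in \emph{opposite} halves, whereas $P'=\proj_R P$ and $Q$ both lie in $\Cm$. You should instead invoke the same-half, single-building analogue, which is exactly \cite[Lemma~$13$]{DMVM11}; its proof is identical to that of Lemma~\ref{GeneralizationDMVMLemma13} after substituting the classical building versions of Lemma~\ref{AB08Exercise5.168+projbijection+Mu97Lemma3.5}(a),(c). A smaller point worth spelling out in Stage 1: the claim that ``$\proj_R$ restricts to a bijection $P\to P'$'' rests on the identification $\proj_R|_P=\proj^P_{P'}$, which follows from absorption, namely $\proj_{P'}x=\proj_{P'}\proj_R x=\proj_R x$ for $x\in P$ since $\proj_R x\in P'\subseteq R$.
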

\begin{proof}
	For a proof see \cite[Lemma $17$]{DMVM11}. We note that the facts which are used in \cite{DMVM11} for buildings are proved in this paper for twin buildings.
\end{proof}

Let $\Sigma_+ \subseteq \C_+$ and $\Sigma_- \subseteq \C_-$ be apartments of $\Delta_+$ and $\Delta_-$, respectively. Then the set $\Sigma := \Sigma_+ \cup \Sigma_-$ is called \textit{twin apartment} if $\vert x^{\mathrm{op}} \cap \Sigma \vert = 1$ for each $x\in \Sigma$. If $(x, y)$ is a pair of opposite chambers, then there exists a unique twin apartment $A(x, y) = \{ z\in \C \mid \delta(x, z) = \delta(y, z) \} = \{ z\in \C \mid \ell(z, x) = \ell(z, y) \}$ containing $x$ and $y$ (cf. \cite[Exercise $5.187$, Proposition $5.179(1)$]{AB08}). For $\epsilon \in \{+,-\}$ we put $A_{\epsilon}(x, y) := A(x, y) \cap \Cp$. Furthermore, for any two chambers there exists a twin apartment containing them (cf. \cite[Proposition $5.179(3)$]{AB08}).

\begin{lemma}\label{Twinapartment}
	Let $(x, y)$ be a pair of opposite chambers and let $P \subseteq \C$ be a panel which meets $A(x, y)$. Then $A(x, y) \cap P = \{ \proj_P x \neq \proj_P y \}$.
\end{lemma}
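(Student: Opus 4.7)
The plan is to reduce by the symmetry that interchanges $x$ and $y$ (together with the two halves) to the case $P \subseteq \C_+$, then use the thinness of $A_+(x,y)$ to see that $A(x,y) \cap P$ has exactly two chambers, and finally identify these two chambers as $\proj_P x$ and $\proj_P y$ via the codistance formulas.

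Assume $P \subseteq \C_+$, so that $A(x,y) \cap P = A_+(x,y) \cap P$. Since $A_+(x,y)$ is an apartment of $\Delta_+$ meeting $P$, thinness gives $|A_+(x,y) \cap P| = 2$; call the two chambers $c_1, c_2$. Convexity of $A_+(x,y)$ together with $x \in A_+(x,y)$ forces $\proj_P x \in A_+(x,y) \cap P$, so we may arrange $c_1 = \proj_P x$; letting $s$ be the type of $P$ and $w := \delta_+(x,c_1)$, the defining property of the projection yields $\delta_+(x,c_2) = ws$ with $\ell(ws) = \ell(w)+1$.

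To identify $c_2$ with $\proj_P y$, I use that $c_1, c_2 \in A(x,y) = \{z \mid \delta(x,z) = \delta(y,z)\}$, which gives $\delta_*(y, c_1) = w$ and $\delta_*(y, c_2) = ws$; in particular $\ell_*(y,c_2) = \ell(w)+1 > \ell(w) = \ell_*(y,c_1)$. On the other hand, the identity $\delta_*(y,r) = \delta_*(y, \proj_P y)\, \delta_+(\proj_P y, r)$, valid for every $r \in P$ by the defining property of $\proj_P y$ recalled just before Lemma \ref{Mu97Lemma3.4}, shows that $\proj_P y$ is the unique chamber of $P$ at maximal codistance from $y$ and that every other chamber of $P$ has codistance exactly one less. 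The strict inequality above therefore forces $c_2 = \proj_P y$, and we conclude $A(x,y) \cap P = \{\proj_P x, \proj_P y\}$ with $\proj_P x \neq \proj_P y$. The case $P \subseteq \C_-$ is symmetric.

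I do not expect any genuine obstacle: the argument just combines thinness and convexity of $A_+(x,y)$ in $\Delta_+$ with the elementary codistance characterisation of the projection of an opposite chamber onto the spherical panel $P$.
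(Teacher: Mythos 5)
Your argument is correct, and it rests on the same circle of ideas as the paper's proof (thinness of $A_{+}(x,y)$, the gate property of $\proj_P x$ in the same half, and the codistance-gate property of $\proj_P y$ from the opposite half, combined with the defining identity $\delta(x,z)=\delta(y,z)$ on $A(x,y)$). The difference is purely organisational: the paper first invokes \cite[Lemma $5.173(6)$]{AB08} to place both $\proj_P x$ and $\proj_P y$ inside $A(x,y)\cap P$ and then rules out equality by a contradiction via a length computation, whereas you place only $\proj_P x$ in $A(x,y)\cap P$ (via convexity, which is the same-half special case of that cited lemma) and then identify the \emph{other} intersection chamber directly as $\proj_P y$ by comparing codistances. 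Your version is a direct rather than indirect argument and replaces the citation for the opposite-half projection by the elementary observation that $\proj_P y$ is the unique codistance-maximising chamber of $P$; this makes the proof marginally more self-contained, at the cost of a WLOG reduction at the start. Both proofs are equally short and valid.
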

\begin{proof}
	We have $\proj_P x, \proj_P y \in A(x, y) \cap P$ (cf. \cite[Lemma $5.173$ ($6$)]{AB08}). Since $A_{\epsilon}(x, y)$ is thin for each $\epsilon \in \{+,-\}$, we obtain $\vert A(x, y) \cap P \vert = 2$. We assume that $\proj_P x = \proj_P y$. Then there exists a chamber $\proj_P x \neq z \in A(x, y) \cap P$. We can assume that $y$ is in the same half of the twin building as the panel $P$. This implies
	\begin{align*}
	\ell(y, z) = \ell(x, z) = \ell(x, \proj_P x) - \ell(\proj_P x, z) = \ell(y, \proj_P y) - 1 = \ell(y, z) -2.
	\end{align*}
	This yields a contradiction and the claim follows.
\end{proof}

\begin{lemma}\label{Lemadjacent}
	Let $\epsilon \in \{+,-\}, s\in S$ and let $\Delta$ be a thick twin building. Then for all $x, y \in \Cm$ the following properties are equivalent:
	\begin{enumerate}[label=(\roman*), leftmargin=*]
		\item $\delta_{-\epsilon}(x, y) \in \langle s \rangle$;
		\item $\forall z\in x^{\mathrm{op}} \exists z' \in y^{\mathrm{op}}: \delta_{\epsilon}(z, z') = s$.
	\end{enumerate}
\end{lemma}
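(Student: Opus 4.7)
The plan is to prove (i)$\Rightarrow$(ii) and (ii)$\Rightarrow$(i) separately, using the projection formula for spherical residues across halves (which, crucially, \emph{maximizes} $\ell_*$), the axioms (Tw2) and (Tw3), Lemma~\ref{AB08Lemma5.139}, and the defining property of twin apartments $A(x,z)=\{c\mid\delta(c,x)=\delta(c,z)\}$.

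For (i)$\Rightarrow$(ii), I would fix $z\in x^{\mathrm{op}}$ and split according to whether $x=y$ or $\delta_{-\epsilon}(x,y)=s$. In the first case, let $P$ be the $s$-panel through $z$ in $\C_\epsilon$; since $\ell_*(x,z)=0$ while $\proj_P x$ maximizes $\ell_*(x,\cdot)$ on $P$, one gets $\proj_P x\neq z$, and the projection formula forces $\delta_*(x,\proj_P x)=s$ and $\delta_*(x,z'')=1_W$ for every other $z''\in P$. Thickness ($|P|\ge 3$) then supplies a chamber $z'\in P\cap x^{\mathrm{op}}=y^{\mathrm{op}}$ with $z'\neq z$, so $\delta_\epsilon(z,z')=s$ as required. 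In the second case, let $P'$ be the $s$-panel in $\C_{-\epsilon}$ containing both $x$ and $y$; the projection $\proj_{P'}z$ cannot equal $x$ (otherwise the projection formula would give $\ell_*(z,y)=-1$), hence $\proj_{P'}z=y$, and the formula then yields $\delta_*(z,y)=s$. Applying (Tw3) to the pair $(y,z)$ thereupon produces $z'\in\C_\epsilon$ with $\delta_\epsilon(z,z')=s$ and $\delta_*(y,z')=s\cdot s=1_W$, i.e., $z'\in y^{\mathrm{op}}$.

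For (ii)$\Rightarrow$(i), I would choose a twin apartment $A$ containing both $x$ and $y$ and let $z_0\in A_\epsilon$ be the unique chamber of $A_\epsilon$ opposite $x$. Writing $v:=\delta_{-\epsilon}(x,y)$, the identity $A=A(x,z_0)$ evaluated at $c=y$ gives $\delta_*(y,z_0)=\delta_{-\epsilon}(y,x)=v^{-1}$. Applying hypothesis (ii) to $z_0\in x^{\mathrm{op}}$ produces $z'\in y^{\mathrm{op}}$ with $\delta_\epsilon(z_0,z')=s$, and Lemma~\ref{AB08Lemma5.139} (read with $\epsilon$ replaced by $-\epsilon$, so that the moving chamber lies in $\C_\epsilon$) then forces $\delta_*(y,z')\in\{v^{-1},v^{-1}s\}$. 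Since $\delta_*(y,z')=1_W$, we conclude $v\in\{1_W,s\}=\langle s\rangle$.

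The delicate step is the case $\delta_{-\epsilon}(x,y)=s$ of the first implication, where one must pin down $\delta_*(z,y)$ from the sole information $z\in x^{\mathrm{op}}$. The essential observation is that projections into spherical residues across halves maximize $\ell_*$ rather than minimize it; this immediately identifies $y$ as $\proj_{P'}z$ and reduces the whole case to a single application of (Tw3). The converse implication is cleaner because the twin-apartment relation supplies the missing codistance $\delta_*(y,z_0)=\delta_{-\epsilon}(y,x)$ essentially for free.
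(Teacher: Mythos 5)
Your (ii)$\Rightarrow$(i) argument matches the paper's, and your treatment of the case $x=y$ in (i)$\Rightarrow$(ii) is correct. However, there is a genuine gap in the case $\delta_{-\epsilon}(x,y)=s$: you correctly observe that $\proj_{P'}z\neq x$, but you then conclude ``hence $\proj_{P'}z=y$''. This does not follow. The twin building is thick, so the $s$-panel $P'$ contains at least three chambers, and $\proj_{P'}z$ may well be a third chamber $p$ with $p\neq x$ and $p\neq y$. In that case the projection formula gives $\delta_*(z,p)\cdot s=\delta_*(z,x)=1_W$, so $\delta_*(z,p)=s$, and then $\delta_*(z,y)=\delta_*(z,p)\cdot\delta_{-\epsilon}(p,y)=s\cdot s=1_W$ --- not $s$ as your argument requires.

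The paper sidesteps this by invoking Lemma~\ref{AB08Lemma5.139} directly to get $\delta_*(z,y)\in\{1_W,s\}$ and then splitting into two subcases. When $\delta_*(z,y)=1_W$, we already have $z\in y^{\mathrm{op}}$, and exactly your $x=y$ argument (applied with $y$ in place of $x$: take the $s$-panel $P=\P_s(z)$, use thickness to find $z'\in P\setminus\{z,\proj_Py\}$, which lies in $y^{\mathrm{op}}$ and is $s$-adjacent to $z$) produces the required $z'$. When $\delta_*(z,y)=s$, your (Tw3) step goes through. So your proof is salvageable by inserting this missing subcase, but as written the step ``$\proj_{P'}z\neq x$, hence $\proj_{P'}z=y$'' is unsound precisely because of the thickness hypothesis that the rest of your argument relies on.
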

\begin{proof}
	If $x=y$ the claim follows because of the thickness of the twin building. Let $\delta_{-\epsilon}(x, y) = s$ and $z\in x^{\mathrm{op}}$. Then $\delta_*(z, y) \in \{1_W, s\}$ by Lemma \ref{AB08Lemma5.139}. If $\delta_*(z, y) = 1_W$, the claim follows because of the thickness again. Let $\delta_*(z, y) = s$. Then $\delta_*(y, z) = s$ and we obtain $z' \in \C_{-\epsilon}$ such that $\delta_{-\epsilon}(z, z') = s$ and $\delta_*(y, z') = 1_W$ by (Tw$3$). Therefore we have $(ii)$.
	
	Now we assume $(ii)$. There exists a twin apartment $\Sigma$ containing $x$ and $y$. Let $w = \delta_{-\epsilon}(y, x)$ and let $z \in \Sigma$ be the unique chamber which is opposite to $x$. This implies $\Sigma = A(x, z)$ and we obtain $\delta_*(y, z) = \delta_{-\epsilon}(y, x) = w$. By condition there exists a chamber $z' \in y^{\mathrm{op}}$ such that $\delta_{-\epsilon}(z, z') = s$. Then $1_W = \delta_*(y, z') \in \{ w, ws \}$ by Lemma \ref{AB08Lemma5.139}. The claim follows.
\end{proof}

\section{Wall-connected twin buildings}

\subsection*{Compatible path's}

Let $\Delta = (\C, \delta)$ be a thick building of type $(W, S)$ and let $\Gamma$ be the graph whose vertices are the panels of $\Delta$ and in which two panels form an edge if and only if there exists a rank $2$ residue in which the two panels are opposite. For two adjacent panels $P, Q$, there exists a unique rank $2$ residue containing $P$ and $Q$, that will be denoted by $R(P, Q)$. A path $\gamma = (P_0, \ldots, P_k)$ in $\Gamma$ is called \textit{compatible} if $\proj_{R(P_{i-1}, P_i)} P_0 = P_{i-1}$ for all $1 \leq i \leq k$. The number $k$ is the \textit{length} of that path $\gamma$. The sequence $(J_1, \ldots, J_k)$ where $J_i$ is the type of $R(P_{i-1}, P_i)$ will be called the \textit{type} of $\gamma$.

We obtain $\proj_{R(P_{i-1}, P_i)} x = \proj_{P_{i-1}} x$ for all $x\in P_0$ and $1 \leq i \leq k$, since $\proj_{R(P_{i-1}, P_i)} x \in P_{i-1}$. Furthermore, we obtain $\proj_{P_i} x = \proj_{P_i} \proj_{P_{i-1}} x$ for any $1 \leq i \leq k$ by Lemma \ref{residueprojectionabsorbtion}.

\begin{lemma}\label{DMVMLemma19}
	Two panels are parallel if and only if there exists a compatible path from one to the other.
\end{lemma}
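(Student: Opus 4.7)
The plan is to prove the two implications separately.

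For the direction compatible path $\Rightarrow$ parallel panels, I would induct on the length $k$ of the compatible path $(P_0, \ldots, P_k)$. The case $k = 0$ is trivial. For the inductive step I would use the observation recorded immediately before the lemma, namely $\proj_{P_i} x = \proj_{P_i} \proj_{P_{i-1}} x$ for every $x \in P_0$, together with the inductive hypothesis that $\proj_{P_{k-1}}|_{P_0}\colon P_0 \to P_{k-1}$ is a bijection (by parallelism of $P_0$ and $P_{k-1}$ and Lemma \ref{parallelresidues}) and the fact that $P_{k-1}$ and $P_k$ are opposite in the spherical rank $2$ residue $R(P_{k-1}, P_k)$, hence parallel by Lemma \ref{DMVMProposition4}, so that $\proj_{P_k}|_{P_{k-1}}\colon P_{k-1} \to P_k$ is also a bijection. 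Composing the two, $\proj_{P_k}|_{P_0}$ is a bijection $P_0 \to P_k$; then thickness of $\Delta$ gives $\vert \proj_{P_k} P_0 \vert = \vert P_k \vert \ge 2$, and the single-building analogue of Lemma \ref{GeneralizationDMVMLemma13} (i.e.\ \cite[Lemma~13]{DMVM11}) yields that $P_0$ and $P_k$ are parallel.

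For the converse I would induct on $d := \ell(\delta(P, Q))$, writing $s_1, s_2$ for the types of $P, Q$ as in Lemma \ref{DMVMLemma14}. The base $d = 0$ forces $P = Q$. For $d \ge 1$ I would pick $x \in P$, set $y := \proj_Q x$ so that $\delta(x, y) = w := \delta(P, Q)$, and use the relation $s_1 w = w s_2$ from Lemma \ref{DMVMLemma14} together with a Matsumoto-style braid-move analysis to exhibit some $t \in S$ with $\{s_1, t\}$ spherical and a reduced expression for $w$ beginning with $t$. Letting $R$ denote the $\{s_1, t\}$-residue on $x$ and $P'$ the panel of $R$ opposite $P$, the pair $\{P, P'\}$ is an edge of $\Gamma$ with $R(P, P') = R$ and $\proj_R P = P$. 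The single-building analogue of Lemma \ref{GeneralizationDMVM11Lemma17} applied to $(P, Q, R)$ shows that $\proj_R Q$ is a panel of $R$ parallel to both $P$ and $Q$; by Lemma \ref{DMVMLemma18}, $\proj_R Q \in \{P, P'\}$, and a length comparison using the first step of the chosen minimal gallery from $x$ to $y$ rules out $\proj_R Q = P$. Hence $\proj_R Q = P'$ is parallel to $Q$, and a gate computation gives $\ell(\delta(P', Q)) = d - 1$. Applying the inductive hypothesis yields a compatible path $(P' = P'_0, P'_1, \ldots, P'_{k'} = Q)$, which I would prepend with $P$ to obtain the candidate $(P, P', P'_1, \ldots, P'_{k'})$.

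The hard part is twofold. First, the selection of $t$ above: one has to argue via Matsumoto's theorem and induction on $\ell(w)$ that, starting from $s_1 w = w s_2$ (both reduced of length $d + 1$), there is a reduced word for $w$ which begins with a letter $t$ such that $\{s_1, t\}$ is spherical, even when no such $t$ appears in an arbitrarily chosen reduced expression of $w$. Second, and more delicate, is verifying that the prepended path $(P, P', P'_1, \ldots, P'_{k'})$ is itself compatible, i.e.\ that $\proj_{R(P'_{j-1}, P'_j)} P = P'_{j-1}$ for every $j \ge 1$, whereas the induction only supplies this with $P'$ in place of $P$. To establish it I would exploit that $P$ and $P'$ are parallel (they are opposite in the spherical residue $R$), so $\proj_{P'}$ realises a bijection $P \to P'$; combined with Lemma \ref{residueprojectionabsorbtion} this lets me rewrite $\proj_{R(P', P'_1)} P$ by first projecting through $P' \subseteq R(P', P'_1)$, settling the case $j = 1$, and then propagate along the path for $j \ge 2$ using the inductive compatibility of $(P', P'_1, \ldots, P'_{k'})$.
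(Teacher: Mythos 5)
The paper itself does not prove this lemma (it is quoted directly from \cite[Lemma~19]{DMVM11}), but it proves the twin-building analogue in full as Theorem~\ref{parallelPcomppath}, and comparing your sketch against that proof exposes a genuine gap in the converse direction. Your forward direction (compatible path $\Rightarrow$ parallel) is sound: the composition identity $\proj_{P_k}\vert_{P_0}=\proj_{P_k}\vert_{P_{k-1}}\circ\proj_{P_{k-1}}\vert_{P_0}$ together with the inductive parallelism of $P_0,P_{k-1}$, parallelism of opposite panels in $R(P_{k-1},P_k)$, thickness, and the single-building version of Lemma~\ref{GeneralizationDMVMLemma13} do the job.

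For the converse, however, the repair you propose for the prepending problem does not work. You want to justify $\proj_{R(P',P'_1)}x=\proj_{R(P',P'_1)}\proj_{P'}x$ for $x\in P$ by ``first projecting through $P'\subseteq R(P',P'_1)$'' via Lemma~\ref{residueprojectionabsorbtion}, but that lemma only gives the reverse identity $\proj_{P'}x=\proj_{P'}\proj_{R(P',P'_1)}x$ (project onto the smaller residue \emph{after} the larger one), and there is no residue $R_2\supseteq R(P',P'_1)$ that would turn your desired rewriting into an instance of it; for $j\ge 2$ the situation is worse still, since $P'$ is not even contained in $R(P'_{j-1},P'_j)$. The clean way to make prepending legitimate is Proposition~\ref{DMVMProposition9}$(d)$: reverse the inductively obtained compatible path to get $(Q,\dots,P'_1,P')$, \emph{append} $P$ — the one new condition to check is precisely $\proj_{R(P',P)}Q=\proj_R Q=P'$, which you have arranged — and reverse again. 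Equivalently, and this is what Theorem~\ref{parallelPcomppath} actually does, build the auxiliary rank-$2$ residue at the $Q$-end rather than the $P$-end and append $Q$, so that no reversal is needed at all. Two further (lesser) points: the Matsumoto-style selection of $t$ with $\{s_1,t\}$ spherical is asserted but not supplied and is not obviously available outside the $2$-spherical case, whereas the $Q$-end construction sidesteps it by choosing a chamber $e$ adjacent to $Q$ with $\ell(x,e)<\ell(x,\proj_Q x)$; and the asserted distance drop $\ell(\delta(P',Q))=d-1$ should read $\ell(\delta(P',Q))=d-(m_{s_1t}-1)$, which is still strictly less than $d$ but not in general $d-1$.
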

\begin{proof}
	This is \cite[Lemma $19$]{DMVM11}.
\end{proof}

\begin{proposition}\label{DMVMProposition9}
	Let $(P_0, \ldots, P_k)$ be a compatible path. Then the following hold:
	\begin{enumerate}[label=(\alph*)]
		\item $\proj_{P_k}^{P_0} = \proj_{P_k}^{P_i} \circ \proj_{P_i}^{P_0}$ for any $0 \leq i \leq k$.
		
		\item $\delta(P_0, P_k) = \delta(P_0, P_i)\delta(P_i, P_k)$ for any $0 \leq i \leq k$;
		
		\item $\ell(\delta(P_0, P_k)) = \ell(\delta(P_0, P_i)) + \ell(\delta(P_i, P_k))$ for any $0 \leq i \leq k$.
		
		\item $(P_k, \ldots, P_0)$ is a compatible path.
	\end{enumerate}
\end{proposition}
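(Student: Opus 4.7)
My plan is to establish (a)--(d) simultaneously by induction on the length $k$ of the compatible path. The base cases are straightforward: for $k = 0$ all four statements are trivial, and for $k = 1$, the panels $P_0$ and $P_1$ are opposite in the spherical rank-$2$ residue $R(P_0, P_1)$, whence Lemma \ref{DMVMProposition4} combined with Lemma \ref{parallelresidues} gives (a), the symmetry of opposition in a rank-$2$ spherical residue yields (d), and (b)--(c) reduce to tautologies. For $k \ge 2$, the initial segment $(P_0, \ldots, P_{k-1})$ is itself a compatible path of length $k-1$, so the inductive hypothesis already provides (a)--(d) for it; in particular, every $P_j$ with $j \le k-1$ is parallel to $P_0$ by Lemma \ref{DMVMLemma19}.

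The central ingredient for (a) is the one-step identity recorded in the paragraph preceding Lemma \ref{DMVMLemma19}: for $x \in P_0$,
\[
  \proj_{P_j}(x) = \proj_{P_j}\bigl(\proj_{P_{j-1}}(x)\bigr),
\]
obtained by combining the compatibility condition $\proj_{R(P_{j-1}, P_j)}(x) = \proj_{P_{j-1}}(x)$ with Lemma \ref{residueprojectionabsorbtion} applied to the inclusion $P_j \subseteq R(P_{j-1}, P_j)$. Iterating, $\proj_{P_k}^{P_0} = \proj_{P_k}^{P_{k-1}} \circ \cdots \circ \proj_{P_1}^{P_0}$. To prove (a) at a general index $i$, I need the analogous iterative formula starting from $y := \proj_{P_i}(x) \in P_i$ rather than from $x \in P_0$, which amounts to showing that the suffix $(P_i, P_{i+1}, \ldots, P_k)$ is itself a compatible path. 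This suffix-compatibility assertion is the main technical obstacle; my plan is to establish it by writing $y = \proj_{P_i}(x)$ for a suitable $x \in P_0$ (using the parallelism of $P_0$ and $P_i$ from Lemma \ref{DMVMLemma19}) and transporting the compatibility condition from $P_0$ to $P_i$ using the inductive hypothesis (a) for the initial segment $(P_0, \ldots, P_{j-1})$ together with the absorption property of projections.

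Once (a) and the suffix compatibility are in place, (b) and (c) follow cleanly. For $x \in P_0$ set $x_j := \proj_{P_j}(x)$, so that $x_j = \proj_{P_j}(x_{j-1})$ by the one-step identity. The projection identity $\delta(a, y) = \delta(a, \proj_R a) \delta(\proj_R a, y)$ (with additive lengths) applied to $R = R(P_{j-1}, P_j)$, $a = x_0$, and $y = x_j \in R$, combined with compatibility $\proj_R(x_0) = x_{j-1}$, yields
\[
  \delta(x_0, x_j) = \delta(x_0, x_{j-1}) \delta(x_{j-1}, x_j) \quad \text{with} \quad \ell(x_0, x_j) = \ell(x_0, x_{j-1}) + \ell(x_{j-1}, x_j).
\]
Iterating on $j$ exhibits $\delta(P_0, P_k) = \prod_{j=1}^k \delta(P_{j-1}, P_j)$ as a reduced product; grouping the factors at index $i$ and applying the same argument to the compatible subpaths $(P_0, \ldots, P_i)$ and $(P_i, \ldots, P_k)$ gives (b) and (c). Finally, (d) follows by duality: taking inverses in the bijection identity of (a) and invoking Lemma \ref{parallelresidues} translates it into $\proj_{P_0}^{P_k} = \proj_{P_0}^{P_1} \circ \cdots \circ \proj_{P_{k-1}}^{P_k}$, which combined with the suffix compatibility applied symmetrically to the reversed path gives the defining compatibility condition for $(P_k, \ldots, P_0)$. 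As noted above, the bulk of the technical work sits in verifying the suffix-compatibility assertion; once that is secured the remaining deductions are essentially formal manipulations of the projection identity.
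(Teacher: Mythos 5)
Your plan correctly identifies the one-step identity $\proj_{P_j} x = \proj_{P_j}\proj_{P_{j-1}} x$ (for $x\in P_0$) and its iteration, and the induction-on-$k$ framework is the same as the paper's. But there are two gaps. First, for (a) at a general index $i$ you propose to establish \emph{suffix compatibility} of $(P_i,\dots,P_k)$, and your sketch of that step only invokes (a) for shorter paths together with Lemma \ref{residueprojectionabsorbtion}. That is not enough: those tools give $\proj_{P_{j-1}}\proj_{R(P_{j-1},P_j)} z = \proj_{P_{j-1}}z$ for $z\in P_i$, but they do not force $\proj_{R(P_{j-1},P_j)} z$ to actually lie in $P_{j-1}$; one needs a length argument (via the gate property together with (c) for the shorter prefixes) to rule out $\proj_{R(P_{j-1},P_j)} z$ landing deeper in the rank-$2$ residue. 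The paper sidesteps this entirely by proving (c) first by a direct length computation, and then (a) drops out by uniqueness of the gate in $P_k$ — no suffix compatibility is used.

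The more serious gap is in (d). Taking inverses in (a) via Lemma \ref{parallelresidues} does give the composition identity $\proj_{P_i}^{P_k}=\proj_{P_i}^{P_j}\circ\proj_{P_j}^{P_k}$ for the reversed direction, but this is a statement about \emph{panel} projections and does \emph{not} imply the defining condition for compatibility of $(P_k,\dots,P_0)$, namely $\proj_{R(P_{i-1},P_i)}P_k=P_i$, which concerns the projection onto the ambient rank-$2$ residue. The two statements are genuinely different: the panel-composition identity is a \emph{consequence} of compatibility (via absorption), not equivalent to it. Moreover, "suffix compatibility applied symmetrically to the reversed path" is circular here: the suffixes of the reversed path are reversed prefixes of the original, and their compatibility is exactly (d) for shorter paths, which says nothing about $\proj_{R(P_{i-1},P_i)}P_k$ for the full $P_k$. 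The paper's proof of (d) therefore carries out a second, separate length computation — using (c), the gate property of $R(P_{i-1},P_i)$, and the bound $\ell_\Delta(r_i,p_{i-1})\le\ell(r_{J_i})-1$ — to pin down $\proj_{R(P_{i-1},P_i)}x=\proj_{P_i}x$ for $x\in P_k$. Your proposal omits this calculation, which is the essential content of (d).
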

\begin{proof}
	%Use that $\proj_{P_i} x$ is on a minimal gallery from $x$ to $\proj_{P_k} x$ via $\ell(x, \proj_{P_k} x) = \ell(x, \proj_{P_{k-1}} x) + \ell(\proj_{P_k}x, \proj_{P_{k-1}} \proj_{P_k} x) =(IV) \ell(x, \proj_{P_i}) + \ell(\proj_{P_i}x, \proj_{P_{k-1}} x) + \ell(\proj_{P_k}x, \proj_{P_{k-1}} \proj_{P_k} x) \geq \ell(x, \proj_{P_i}x) + \ell(\proj_{P_i}x, \proj_{P_k}x) \geq \ell(x, \proj_{P_k}x)$.
	At first we prove Assertion $(c)$ by induction on $k$. For $k=0$ there is nothing to show. Thus we let $k>0$ and $x\in P_0$. Then $\ell_{\Delta}(x, \proj_{P_k} x) = \ell_{\Delta}(x, \proj_{R(P_{k-1}, P_k)} x) + \ell_{\Delta}(\proj_{R(P_{k-1}, P_k)} x, \proj_{P_k} x)$. Moreover, we have $\proj_{R(P_{k-1}, P_k)} x = \proj_{P_{k-1}} x$ and $\proj_{P_k} x = \proj_{P_k} \proj_{R(P_{k-1}, P_k)} x = \proj_{P_k} \proj_{P_{k-1}} x$. This implies 
	\[ \ell_{\Delta}(x, \proj_{P_k} x) = \ell_{\Delta}(x, \proj_{P_{k-1}} x) + \ell(\delta(P_{k-1}, P_k)) = \ell(\delta(P_0, P_{k-1})) + \ell(\delta(P_{k-1}, P_k)) \]
	Using induction, we have $\ell(\delta(P_0, P_{k-1})) = \ell(\delta(P_0, P_i)) + \ell(\delta(P_i, P_{k-1}))$ for any $1 \leq i \leq k-1$. We deduce that $\ell(\delta(P_0, P_k)) \geq \ell(\delta(P_0, P_i)) + \ell(\delta(P_i, P_k))$. In particular, we obtain 
	\begin{align*}
		\ell_{\Delta}(x, \proj_{P_k}x) &\geq \ell_{\Delta}(x, \proj_{P_i} x) + \ell_{\Delta}(\proj_{P_i}x, \proj_{P_k} \proj_{P_i} x) \geq \ell_{\Delta}(x, \proj_{P_k} x)
	\end{align*}
	This finishes the proof of Assertion $(c)$. Now Assertion $(b)$ is a direct consequence of Assertion $(c)$ and Assertion $(a)$ follows from Assertion $(c)$ and the uniqueness of the projection chamber.
	%For $x\in P_0$ we have $\proj_{P_k} \proj_{P_i} x = \proj_{P_k} \proj_R \proj_{P_i} x = \proj_{P_k} \proj_{P_{i-1}} \proj_{P_i} x = (IV) \proj_{P_k} \proj_{P_{i-1}} x = \proj_{P_k} x.
	Note that $(a)$ also implies that $P_i$ and $P_k$ are parallel. For Assertion $(d)$ we use Lemma \ref{parallelresidues} and the equation of the projection mappings of Assertion $(a)$ and compute the following for each $0 \leq i \leq j \leq k$:
	\begin{align*}
	\proj_{P_i}^{P_k} = \proj_{P_i}^{P_0} \circ \proj_{P_0}^{P_k} = \left( \proj_{P_i}^{P_j} \circ \proj_{P_j}^{P_0} \right) \circ \proj_{P_0}^{P_k} = \proj_{P_i}^{P_j} \circ \proj_{P_j}^{P_k}.
	\end{align*}
	We have to show that $\proj_{R(P_{i-1}, P_i)} P_k = P_i$. For that we show $\proj_{R(P_{i-1}, P_i)} x = \proj_{P_i} x$ for each $x\in P_k$. Let $x\in P_k$ and let $r_i := \proj_{R(P_{i-1}, P_i)} x, p_i := \proj_{P_i} x$ and $p_{i-1} := \proj_{P_{i-1}} x$. Using Assertion $(c)$ and the fact that $\proj_{P_{i-1}} p_i = p_{i-1}$ we have 
	\begin{align*}
		\ell_{\Delta}(x, p_{i-1}) &= \ell(\delta(P_{i-1}, P_k)) \\
		&= \ell(\delta(P_0, P_k)) - \ell(\delta(P_0, P_{i-1})) \\
		&= \ell(\delta(P_0, P_i)) + \ell(\delta(P_i, P_k)) - \ell(\delta(P_0, P_{i-1})) \\
		&= \ell(\delta(P_k, P_i)) + \ell(\delta(P_i, P_{i-1})) \\
		&= \ell_{\Delta}(x, p_i) + \ell_{\Delta}(p_i, \proj_{P_{i-1}} p_i) \\
		&= \ell_{\Delta}(x, r_i) + \ell_{\Delta}(r_i, p_i) + \ell_{\Delta}(p_i, p_{i-1}).
	\end{align*}
	Since $\ell_{\Delta}(r_i, p_{i-1}) \leq \ell(r_{J_i}) -1 = \ell(\delta(P_{i-1}, P_i)) = \ell(\delta(P_i, P_{i-1})) = \ell_{\Delta}(p_i, p_{i-1})$, where $J_i$ is the type of the residue $R(P_{i-1}, P_i)$, we obtain
	\begin{align*}
		\ell_{\Delta}(x, r_i) + \ell_{\Delta}(r_i, p_i) + \ell_{\Delta}(p_i, p_{i-1}) &= \ell_{\Delta}(x, p_{i-1}) \\
		&= \ell_{\Delta}(x, r_i) + \ell_{\Delta}(r_i, p_{i-1}) \\
		&\leq \ell_{\Delta}(x, r_i) + \ell_{\Delta}(p_i, p_{i-1}).
	\end{align*}
	This yields $r_i = p_i$. Since $P_i, P_k$ are parallel, we obtain $\proj_{R(P_{i-1}, P_i)} P_k = \{ \proj_{R(P_{i-1}, P_i)} x \mid x \in P_k \} = \{ \proj_{P_i} x \mid x \in P_k \} = P_i$ and the claim follows.
\end{proof}

\begin{lemma}\label{DMVM11Lemma26}
	Let $s\in S$ and let $w\in W$ be such that $w^{-1}sw \in S$ and $\ell(sw) = \ell(w) +1$. Let $P, P'$ be $s$-panels and $Q, Q'$ be $w^{-1}sw$-panels such that $\delta(P, Q) = w = \delta(P', Q')$. If $(J_1, \ldots, J_k)$ is the type of a compatible path from $P$ to $Q$, then there exists a compatible path from $P'$ to $Q'$ of the same length and type.
\end{lemma}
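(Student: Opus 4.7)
The plan is to use induction on $k$. For the base case $k = 0$, the condition $\delta(P, Q) = 1_W$ forces $P = Q$, hence $w = 1_W$, and therefore $P' = Q'$; the one-vertex path $(P')$ works.

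For the inductive step, set $w_1 := \delta(P_0, P_1)$, $v := \delta(P_1, P_k)$, and let $s_1$ be the type of $P_1$. Proposition \ref{DMVMProposition9}(b)--(c) gives $w = w_1 v$ with $\ell(w) = \ell(w_1) + \ell(v)$, and Lemma \ref{DMVMLemma14} gives $s_1 = w_1^{-1} s w_1$. Let $R'_1$ denote the rank 2 residue of type $J_1$ containing $P'$, and let $P'_1$ be the unique panel of $R'_1$ opposite to $P'$; a rank 2 analysis of the dihedral group $\langle J_1 \rangle$ shows that $P'_1$ has type $s_1$ and that $\delta(P', P'_1) = w_1$ (these being functions of $J_1$ and of the type $s$ of $P'$ only). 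The strategy is to apply the inductive hypothesis to the sub-path $(P_1, \ldots, P_k)$ and the pair $(P'_1, Q')$, and then prepend $P'_0 := P'$ to the resulting compatible path.

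The required type conditions for the pair $(P'_1, Q')$ are immediate since $w^{-1} s w = v^{-1} s_1 v$, and the length condition $\ell(s_1 v) = \ell(v) + 1$ follows from $sw = w_1 s_1 v$ together with $\ell(sw) = \ell(w) + 1$ and additivity of lengths. The crux is to verify $\delta(P'_1, Q') = v$. To show this, pick $x \in P'$ and set $z := \proj_{Q'} x$ (so $\delta(x, z) = w$); take an apartment $\Sigma$ of $\Delta$ containing both $x$ and $z$. Using the reduced factorization $w = w_1 v$, a minimal gallery in $\Sigma$ from $x$ to $z$ passes through a chamber $y_0$ with $\delta(x, y_0) = w_1$ and $\delta(y_0, z) = v$. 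Since $w_1 \in \langle J_1 \rangle$, the chamber $y_0$ lies in $R'_1$. By the converse of Lemma \ref{DMVMLemma14} applied to $x$ and $y_0$, the $s_1$-panel on $y_0$ is parallel to $P'$; since $y_0 \notin P'$ (because $w_1 \notin \langle s \rangle$), Lemma \ref{DMVMLemma18} forces this $s_1$-panel to coincide with $P'_1$, whence $y_0 \in P'_1$. A further application of the converse of Lemma \ref{DMVMLemma14} to $y_0 \in P'_1$ and $z \in Q'$ with $\delta(y_0, z) = v$ then shows that $P'_1$ and $Q'$ are parallel and that $\delta(P'_1, Q') = v$.

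With $\delta(P'_1, Q') = v$ in hand, the inductive hypothesis produces a compatible path $(P'_1, \ldots, P'_k = Q')$ of type $(J_2, \ldots, J_k)$; prepending $P'_0 = P'$ closes the induction. The main obstacle is precisely the verification $\delta(P'_1, Q') = v$, which is handled by the apartment/minimal-gallery argument above: the critical ingredients are that $w_1$ is a reduced prefix of $w$ (so an intermediate chamber $y_0$ at distance $w_1$ from $x$ exists inside $\Sigma$), that $y_0$ automatically lands in $R'_1$ (since $w_1 \in \langle J_1 \rangle$), and that Lemma \ref{DMVMLemma18} pins the $s_1$-panel on $y_0$ to be $P'_1$.
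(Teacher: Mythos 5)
The paper does not supply its own argument for this lemma — it simply cites \cite[Lemma~26]{DMVM11} — so I can only assess your argument on its own terms. Your overall strategy (induction on $k$, peeling off the first rank-$2$ residue, using a minimal gallery inside an apartment to locate an intermediate chamber $y_0$, and identifying $P'_1$ as the $s_1$-panel on $y_0$) is a sensible and essentially workable route. The verification that the inductive hypothesis applies to the pair $(P'_1,Q')$ (types, the length condition $\ell(s_1v)=\ell(v)+1$, and $\delta(P'_1,Q')=v$ via the converse of Lemma~\ref{DMVMLemma14}) is correct in substance.

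There are, however, two problems. The first is minor but should be fixed in the exposition: the phrase ``the unique panel of $R'_1$ opposite to $P'$'' is not meaningful — in a thick rank-$2$ residue a given panel has many opposite panels. What saves your argument is that $P'_1$ is actually pinned down as the $s_1$-panel on the specific chamber $y_0$; you should simply take that as the definition of $P'_1$ and then deduce (by Lemma~\ref{DMVMLemma18}) that it is opposite to $P'$ in $R'_1$, with $\delta(P',P'_1)=w_1$ as a consequence of the converse of Lemma~\ref{DMVMLemma14}.

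The second problem is a genuine gap. You close the induction by saying that ``prepending $P'_0=P'$'' to the compatible path $(P'_1,\ldots,P'_k)$ produced by the inductive hypothesis yields a compatible path $(P',P'_1,\ldots,P'_k)$. This is exactly the assertion that $\proj_{R(P'_{i-1},P'_i)}P' = P'_{i-1}$ for every $1\le i\le k$, and for $i\ge 2$ this does \emph{not} follow just from $(P',P'_1)$ being opposite in $R'_1$ together with the compatibility of $(P'_1,\ldots,P'_k)$; it is an additional condition to verify. The inductive hypothesis gives you \emph{some} compatible path from $P'_1$ to $Q'$, with no a priori relation to $P'$, so you cannot simply concatenate. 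One way to fill the gap: use the length additivity $\ell(\delta(P',Q'))=\ell(\delta(P',P'_1))+\ell(\delta(P'_1,Q'))$ together with Proposition~\ref{DMVMProposition9}(a),(c) for the sub-path and the gate property of $\proj_{R(P'_{i-1},P'_i)}$ to show that for $x_0\in P'$ one has $\proj_{R(P'_{i-1},P'_i)}x_0=\proj_{P'_{i-1}}\bigl(\proj_{P'_1}x_0\bigr)\in P'_{i-1}$; but this deduction is not immediate and must be spelled out. (Alternatively, removing the \emph{last} panel of the path in the inductive step reduces the verification to the single condition $\proj_{R(P'_{k-1},Q')}P' = P'_{k-1}$, which is somewhat easier to control.) As written, the proof is incomplete at this step.
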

\begin{proof}
	This is \cite[Lemma $26$]{DMVM11}.
\end{proof}

\subsection*{$P$-compatible path's}

Let $\Delta = (\Delta_+, \Delta_-, \delta_*)$ be a thick twin building of type $(W, S)$, let $\epsilon \in \{+,-\}$ and let $P \subseteq \Cm, P_0, \ldots, P_k \subseteq \Cp$ be panels such that $(P_0, \ldots, P_k)$ is a compatible path. Then we call this path \textit{$P$-compatible} if $P_0$ is opposite to $P$, and if $\proj_{R(P_{i-1}, P_i)} P = P_i$ for all $1 \leq i \leq k$. We obtain $\proj_{R(P_{i-1}, P_i)} x = \proj_{P_i} x$ for all $x\in P$ and $1 \leq i \leq k$, since $\proj_{R(P_{i-1}, P_i)} x \in P_i$. Furthermore, we obtain $\proj_{P_{i-1}} x = \proj_{P_{i-1}} \proj_{P_i} x$ for any $1 \leq i \leq k$ by Lemma \ref{residueprojectionabsorbtion}.

%\begin{lemma}\label{distancepcompatiblepanels}
%	We have $\ell_*(x, \proj_{P_{i-1}} x) = \ell_*(x, \proj_{P_i} x) - \ell(\delta(P_{i-1}, P_i))$ for all $1 \leq i \leq k$ and any $x\in P$.
%\end{lemma}
%\begin{proof}
%		Let $x\in P$ and $1 \leq i \leq k$. Then we have $\proj_{P_{i-1}} x = \proj_{P_{i-1}} \proj_{P_i} x$. This implies
%	\begin{align*}
%		\ell_*(x, \proj_{P_{i-1}} x) &= \ell_*(x, \proj_{R(P_{i-1}, P_i)} x) - \ell_{\epsilon}(\proj_{R(P_{i-1}, P_i)} x, \proj_{P_{i-1}} \proj_{P_i} x) \\
%		&= \ell_*(x, \proj_{P_i} x) - \ell_{\epsilon}(\proj_{P_i} x, \proj_{P_{i-1}} \proj_{P_i} x) \\
%		&= \ell_*(x, \proj_{P_i} x) - \ell(\delta(P_{i-1}, P_i)).
%	\end{align*}
%	This finishes the claim.
%\end{proof}

\begin{proposition}\label{projectiontransitive}
	Let $\epsilon \in \{+,-\}$ and $P \subseteq \Cm, P_0, \ldots, P_k \subseteq \Cp$ be panels such that $(P_0, \ldots, P_k)$ is a $P$-compatible path. Then the following hold for all $0 \leq i \leq k$:
	\begin{enumerate}[label=(\alph*), leftmargin=*]
		\item $\proj_{P_0}^P = \proj_{P_0}^{P_i} \circ \proj_{P_i}^P$.
		
		\item $\proj_P^{P_0} = \proj_P^{P_i} \circ \proj_{P_i}^{P_0}$.
		
		\item $\ell_*(x, \proj_{P_i} x) = \ell(\delta(P_0, P_i)) +1$ for each $x\in P$.
	\end{enumerate}
\end{proposition}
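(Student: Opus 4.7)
I would proceed by induction on $i$, with the base case $i = 0$ being essentially tautological: (a) and (b) reduce to the identity $\proj_{P_0}^P = \proj_{P_0}^P$, while (c) reads $\ell_*(x, \proj_{P_0} x) = 1$, which follows from Lemma \ref{Mu97Lemma3.4} since $P_0$ and $P$ are opposite panels of some type $\{s\}$, so $\delta_*(x, \proj_{P_0} x) = s$.

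For the inductive step I fix $x\in P$ and set $R_i := R(P_{i-1}, P_i)$ and $p_j := \proj_{P_j} x$. The key observation driving everything is that $\proj_{R_i} x = p_i$: the $P$-compatibility hypothesis forces $\proj_{R_i} P = P_i$, so $\proj_{R_i} x \in P_i$, and Lemma \ref{residueprojectionabsorbtion} applied to $P_i \subseteq R_i$ then gives $\proj_{P_i} x = \proj_{P_i}\proj_{R_i} x = \proj_{R_i} x$; the same lemma applied to $P_{i-1} \subseteq R_i$ yields $p_{i-1} = \proj_{P_{i-1}} p_i$. Part (c) is now immediate from the gate property of $\proj_{R_i}$ at $p_{i-1} \in P_{i-1} \subseteq R_i$,
\[
\ell_*(x, p_i) = \ell_*(x, p_{i-1}) + \ell_\epsilon(p_i, p_{i-1}) = \bigl(\ell(\delta(P_0, P_{i-1})) + 1\bigr) + \ell(\delta(P_{i-1}, P_i)),
\]
which equals $\ell(\delta(P_0, P_i)) + 1$ by Proposition \ref{DMVMProposition9}(c). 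For part (a) I combine $p_{i-1} = \proj_{P_{i-1}} p_i$ with Proposition \ref{DMVMProposition9}(a) applied to the reversed compatible path $(P_i, \ldots, P_0)$ provided by part (d); this yields $\proj_{P_0} y = \proj_{P_0}\proj_{P_{i-1}} y$ for every $y \in P_i$, and specialising to $y = p_i$ together with the inductive hypothesis gives $\proj_{P_0}p_i = \proj_{P_0}p_{i-1} = \proj_{P_0} x$.

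Part (b) is then obtained from (a) purely by inversion. Each of the three pairs $(P_0, P), (P_0, P_i), (P, P_i)$ consists of parallel panels: $(P_0,P)$ by Lemma \ref{AB08Exercise5.168+projbijection+Mu97Lemma3.5}(d); $(P_0,P_i)$ by Lemma \ref{DMVMLemma19} applied to the compatible path; and $(P, P_i)$ because the key observation above implies $\proj_{P_i} P = \proj_{R_i} P = P_i$, so $\vert \proj_{P_i} P \vert \ge 2$ and Lemma \ref{GeneralizationDMVMLemma13} applies. Lemma \ref{AB08Exercise5.168+projbijection+Mu97Lemma3.5}(b) then identifies each relevant projection as a bijection with the opposite projection as its inverse, and inverting both sides of the identity from (a) produces exactly (b).

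The main obstacle will be bookkeeping rather than insight: one must juggle two instances of Lemma \ref{residueprojectionabsorbtion} and deploy the $P$-compatibility hypothesis $\proj_{R_i} P = P_i$ at precisely the right moment to identify $\proj_{R_i} x$ with $p_i$. Once that identification is in place, all three parts follow cleanly from results already established in the paper.
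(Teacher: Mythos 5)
Your proof is correct and takes essentially the same inductive approach as the paper: the same key identities ($\proj_{R_i}x = \proj_{P_i}x$, $\proj_{P_{i-1}}\proj_{P_i}x = \proj_{P_{i-1}}x$, Proposition \ref{DMVMProposition9}(a),(c),(d)) drive the argument, with (b) obtained by inverting (a) using the parallelism of the three pairs of panels. Your presentation is a bit more explicit than the paper's — you handle the base case of (c) via Lemma \ref{Mu97Lemma3.4} and establish the parallelism of $P$ and $P_i$ directly from the $P$-compatibility hypothesis via Lemma \ref{GeneralizationDMVMLemma13}, where the paper leaves this somewhat terse — but the underlying reasoning is the same.
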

\begin{proof}
	It suffices to show the claim only for $i=k$, since $(P_0, \ldots, P_i)$ is a $P$-compatible path. For $k=0$ there is nothing to show. For $x\in P$ we have $\proj_{P_{k-1}} \proj_{P_k} x = \proj_{P_{k-1}} x$. Using induction and Proposition \ref{DMVMProposition9}$(a)$ and $(d)$ we obtain
	\begin{align*}
	\proj_{P_0} x = \proj_{P_0} \proj_{P_{k-1}} x = \proj_{P_0} \proj_{P_{k-1}} \proj_{P_k} x = \proj_{P_0} \proj_{P_k} x. 
	\end{align*}
	This proves Assertion $(a)$. Using Lemma \ref{parallelresidues} and Lemma \ref{AB08Exercise5.168+projbijection+Mu97Lemma3.5}$(b)$ and $(d)$, the panels $P$ and $P_i$ are parallel. Using again Lemma \ref{parallelresidues} and Lemma \ref{AB08Exercise5.168+projbijection+Mu97Lemma3.5}$(b)$ and $(d)$, Assertion $(b)$ is a consequence of Assertion $(a)$. For Assertion $(c)$ it also suffices to show the claim for $i=k$. Let $x\in P$. Since $\proj_{P_{k-1}} \proj_{P_k} x = \proj_{P_{k-1}} x$, we infer $\ell_*(x, \proj_{P_{k-1}} x) = \ell_*(x, \proj_{P_k} x) - \ell_{-\epsilon}(\proj_{P_k} x, \proj_{P_{k-1}} x) = \ell_*(x, \proj_{P_k} x) - \ell(\delta(P_{k-1}, P_k))$. Using Proposition \ref{DMVMProposition9}$(c)$ and induction we obtain $\ell_*(x, \proj_{P_k} x) = \ell_*(x, \proj_{P_{k-1}} x) + \ell(\delta(P_{k-1}, P_k)) = \ell(\delta(P_0, P_k)) +1$ and the claim follows.
\end{proof}

\begin{lemma}\label{GeneralizationDMVM11Lemma14}
	Let $\epsilon \in \{+,-\}, P_1 \subseteq \Cp, P_2 \subseteq \Cm$ be two parallel panels. Let $s_i$ be the type of $P_i$. Then $s_2 = w^{-1} s_1 w$, where $w := \delta_*(x, \proj_{P_2} x)$ does not depend on the choice of $x$ in $P_1$.
	
	Conversely, if $x$ and $y$ are chambers with $\delta_*(x, y) = w$, where $w$ satisfies $s_2 = w^{-1} s_1 w$ and $\ell(s_1w) = \ell(w) -1$, then the $s_1$-panel of $x$ is parallel to the $s_2$-panel of $y$.
\end{lemma}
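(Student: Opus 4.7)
The plan is to adapt the proof of Lemma \ref{DMVMLemma14} to the twin setting, with Lemma \ref{NearlyConditionF} playing the role of the combinatorial core and the twin projection formula together with Lemma \ref{AB08Exercise5.168+projbijection+Mu97Lemma3.5} supplying the bookkeeping.

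For the direct direction, I would fix $x \in P_1$, set $y := \proj_{P_2} x$ so that $\delta_*(x, y) = w$ by definition, pick any $x' \in P_1 \setminus \{x\}$ (so $\delta_{\epsilon}(x, x') = s_1$) and put $y' := \proj_{P_2} x'$. Since $P_1$ and $P_2$ are parallel, Lemma \ref{AB08Exercise5.168+projbijection+Mu97Lemma3.5}(b) makes $\proj^{P_1}_{P_2}$ and $\proj^{P_2}_{P_1}$ mutually inverse bijections, so $y' \neq y$, $\delta_{-\epsilon}(y, y') = s_2$ and $\proj_{P_1} y = x$, $\proj_{P_1} y' = x'$. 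Four applications of the twin projection formula then pin down $\delta_*(x, y') = w s_2$, $\delta_*(x', y) = s_1 w$, $\delta_*(x', y') = s_1 w s_2$, with lengths $\ell(w)-1$, $\ell(w)-1$, $\ell(w)$ respectively. Applying Lemma \ref{NearlyConditionF} to $w$ with $s = s_1$ and $t = s_2$ excludes the branch $\ell(s_1 w s_2) = \ell(w) - 2$, leaving $s_1 w s_2 = w$; this yields $s_2 = w^{-1} s_1 w$ and simultaneously $\delta_*(x', y') = w$, so $w$ is independent of the choice of $x \in P_1$.

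For the converse, assume $\delta_*(x, y) = w$ with $s_2 = w^{-1} s_1 w$ and $\ell(s_1 w) = \ell(w) - 1$; note that $w s_2 = s_1 w$ forces $\ell(w s_2) = \ell(w) - 1$ as well. Let $P_1, P_2$ be the $s_1$- and $s_2$-panels on $x$ and $y$. For any $y'' \in P_2 \setminus \{y\}$, Lemma \ref{AB08Lemma5.139} gives $\delta_*(x, y'') \in \{w, w s_2\}$, and the maximum-length characterisation of $\proj_{P_2} x$, together with $\ell(w s_2) < \ell(w)$, forces $\proj_{P_2} x = y$. The symmetric argument in $P_1$ yields $\proj_{P_1} y = x$, whence $\delta_*(x', y) = s_1 w$ for every $x' \in P_1 \setminus \{x\}$. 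Axiom (Tw$3$) applied at $(x', y, s_2)$ then produces $z \in P_2$ with $\delta_{-\epsilon}(y, z) = s_2$ and $\delta_*(x', z) = s_1 w s_2 = w$; the maximum-length argument again identifies $\proj_{P_2} x' = z \neq y$. Thus $|\proj_{P_2} P_1| \geq 2$, and Lemma \ref{GeneralizationDMVMLemma13} concludes that $P_1$ and $P_2$ are parallel.

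The expected obstacle is organisational rather than conceptual: one must work with two chambers of $P_1$ and both their images in $P_2$ simultaneously, so that both length hypotheses of Lemma \ref{NearlyConditionF} appear from the projection formula at once. Once that four-chamber diagram is in place the conjugation identity falls out immediately, and the converse reduces to the mild technicality of using a single application of (Tw$3$) to produce a second element of $\proj_{P_2} P_1$.
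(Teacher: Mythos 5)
Your proposal is correct and follows essentially the same strategy as the paper's proof. For the direct direction you derive the same key relation $w' = s_1 w s_2$ with $\ell(w') = \ell(w)$, $\ell(s_1w) = \ell(w)-1 = \ell(ws_2)$, and apply Lemma \ref{NearlyConditionF}; for the converse you likewise establish $\proj_{P_2}x = y$, use (Tw3) to produce a second chamber of $\proj_{P_2}P_1$, and finish with Lemma \ref{GeneralizationDMVMLemma13} — the only (immaterial) difference being that the paper applies (Tw3) from a second chamber of $P_2$ to land in $P_1$, while you apply it from a second chamber of $P_1$ to land in $P_2$.
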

\begin{proof}
	We follow the ideas of \cite[Lemma $14$]{DMVM11}. Let $x_1 \neq y_1 \in P_1$, let $x_2 := \proj_{P_2} x_1, y_2 := \proj_{P_2} y_1$ and let $w = \delta_*(x_1, x_2), w' = \delta_*(y_1, y_2)$. Then we have $\delta_*(x_2, y_1) = w^{-1}s_1$ and $\delta_*(y_1, x_2) = w's_2$. In particular, $s_1w = w's_2$ and hence $w' = s_1 w s_2$. Since $\ell(w's_2) = \ell(w') -1$, we have $\ell(s_1ws_2) = \ell(w') = \ell(w's_2) +1 = \ell(s_1w) +1 = \ell(w)$. Since $\ell(ws_2) \ell(w)-1$, the claim follows from Lemma \ref{NearlyConditionF}. For the converse let $P_1$ the $s_1$-panel of $x$ and let $P_2$ the $s_2$-panel of $y$. Let $x\in P_1$. Then $\proj_{P_2} x = y$, since $\ell(ws_2) = \ell(s_1w) = \ell(w) -1$. Choose $\proj_{P_2} x \neq p\in P_2$. Then $\delta_*(x, p) = ws_2$. By (Tw$3$) there exists $z\in P_1$ such that $\delta_*(z, p) = s_1ws_2 = w$. Since $\ell(s_1ws_2) = \ell(w) = \ell(ws_2) +1$ we have $z = \proj_{P_1} p$. It follows from $\ell(s_1w) = \ell(w) -1$ that $x = \proj_{P_1} \proj_{P_2} x$. Since $\delta_*(x, p) \neq \delta_*(z, p)$, we deduce that $x$ and $\proj_{P_1} p$ are different. The claim follows now from Lemma \ref{GeneralizationDMVMLemma13}.
	%We have $\ell(w) -1 = \ell(s_1w) = \ell(w's_2) = \ell(w') -1 = \ell(s_1ws_2) -1$. For the converse: Let $x\in P_1$. Then $\delta_*(x, \proj_{P_2} x) = w$. Take $\proj_{P_2} x \neq y\in P_2$. Then $\delta_*(x, y) = ws_2$, since $\ell(ws_2) = \ell(s_1w) = \ell(w) -1$. By (Tw$3$) there exists $z\in P_1$ such that $\delta_*(z, y) = s_1ws_2 = w$. Since $\ell(s_1ws_2) = \ell(ws_2) +1$ we have $z = \proj_{P_1} y$. Then $x = \proj_{P_1} \proj_{P_2} x$ and $\proj_{P_1} y$ are different. The claim follows.
\end{proof}

Let $P_1$ and $P_2$ be two parallel panels in different halves. By the previous lemma $\delta_*(x, \proj_{P_2} x)$ does not depend on the choice of $x\in P_1$. Therefore we define $\delta(P_1, P_2) := \delta_*(x, \proj_{P_2} x)$, where $x$ is a chamber in $P_1$.

\begin{theorem}\label{parallelPcomppath}
	Let $\epsilon \in \{+,-\}, P \subseteq \Cp, Q \subseteq \Cm$ be two panels. Then $P, Q$ are parallel if and only if there exists a $P$-compatible path $(Q_0, \ldots, Q_k = Q)$.
\end{theorem}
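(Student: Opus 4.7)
The plan is to handle the two directions separately: the ``if'' direction follows directly from Proposition \ref{projectiontransitive}, and the ``only if'' direction will be proved by induction on $\ell(w)$ where $w := \delta_*(P, Q)$.

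For the ``if'' direction, suppose $(Q_0, \ldots, Q_k = Q)$ is $P$-compatible. Since $Q_0$ is opposite to $P$, Lemma \ref{AB08Exercise5.168+projbijection+Mu97Lemma3.5}(d),(b) gives that $\proj_{Q_0}^P$ and $\proj_P^{Q_0}$ are mutually inverse bijections; since $(Q_0, \ldots, Q_k)$ is a compatible path in $\Delta_{-\epsilon}$, Lemma \ref{DMVMLemma19} yields that $Q_0$ and $Q$ are parallel, and Lemma \ref{parallelresidues} then gives that $\proj_{Q_0}^Q$ and $\proj_Q^{Q_0}$ are mutually inverse bijections as well. Proposition \ref{projectiontransitive}(a),(b) with $i = k$ supplies the factorisations $\proj_{Q_0}^P = \proj_{Q_0}^Q \circ \proj_Q^P$ and $\proj_P^{Q_0} = \proj_P^Q \circ \proj_Q^{Q_0}$; combining these with the bijectivity just recorded forces $\proj_Q^P$ and $\proj_P^Q$ to be mutually inverse bijections, and Lemma \ref{AB08Exercise5.168+projbijection+Mu97Lemma3.5}(b) concludes that $P$ and $Q$ are parallel.

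For the ``only if'' direction, assume $P$ and $Q$ parallel and set $w := \delta_*(P, Q)$. Lemma \ref{GeneralizationDMVM11Lemma14} records $\ell(s_1 w) = \ell(w) - 1$, which forces $\ell(w) \geq 1$, and I induct on $\ell(w)$. If $\ell(w) = 1$ then $w = s_1$, hence $P$ and $Q$ are opposite and the trivial path $(Q_0 := Q)$ is $P$-compatible. For $\ell(w) \geq 2$ I aim to produce a spherical rank 2 residue $R \subseteq \Cm$ containing $Q$ with $\proj_R P = Q$ such that the panel $Q' \subseteq R$ opposite to $Q$ is parallel to $P$ and satisfies $\ell(\delta_*(P, Q')) < \ell(w)$. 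Granted $R$ and $Q'$, the inductive hypothesis applied to $(P, Q')$ supplies a $P$-compatible path $(Q_0, \ldots, Q_{k-1} := Q')$, and appending $Q$ yields the required path; the remaining compatibility clause $\proj_R Q_0 = Q'$ follows from Lemma \ref{GeneralizationDMVM11Lemma17} (so that $\proj_R Q_0$ is a panel in $R$ parallel to $Q_0$, hence to $Q'$ by the ``if'' direction already proved) combined with Lemma \ref{DMVMLemma18} (which pins it down to $\{Q, Q'\}$), and the hypothesis $\proj_R P = Q$ rules out $Q$.

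The principal obstacle is the construction of $R$ and $Q'$. My approach is to work inside a twin apartment $\Sigma = A(x, y_0)$ containing a chamber $x \in P$, an opposite chamber $y_0 \in x^{\mathrm{op}} \cap \Cm$, and the chamber $y := \proj_Q x$; its existence is guaranteed by \cite[Proposition $5.179(3)$]{AB08}. Inside $\Sigma$ the twin structure yields $\delta_*(x, z) = \delta_-(y_0, z)$ for every $z \in \Sigma \cap \Cm$, converting the codistance question into a distance problem in the thin apartment. A careful analysis of the descent structure of $w$ (invoking Lemma \ref{NearlyConditionF}) together with Lemma \ref{Twinapartment} then locates the desired spherical rank 2 residue $R$ of type $\{s_2, t\}$ through $y$ with $\proj_R x = y$; the parallel-ness of $P$ and $Q$ propagates this to $\proj_R P = Q$, and a direct computation using the projection formula inside $R$ delivers $\ell(\delta_*(P, Q')) < \ell(w)$, closing the induction.
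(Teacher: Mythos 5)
Your "if" direction is correct and is essentially the paper's argument (the paper passes through Lemma \ref{GeneralizationDMVMLemma13} after observing $\lvert \proj_Q P\rvert \geq 2$, while you unwind the bijections directly; this is a cosmetic difference). The "only if" direction follows the same high-level plan as the paper: induct on $\ell(w)$, produce a spherical rank-$2$ residue $R$ through $Q$ with $\proj_R P = Q$, find a panel $Q'$ opposite to $Q$ in $R$ with $\ell(\delta(P,Q')) < \ell(w)$, apply induction to $(P,Q')$, and append $Q$. The paper realises this via a concrete chamber $e$ with $\ell_*(x,e) = \ell_*(x,\proj_Q x) - 2$ and a specific chamber $q$ opposite to $x$, taking $Q' := \proj_R \P_s(q)$; your twin-apartment sketch aims at the same object but leaves the actual construction and the inequalities $\proj_R P = Q$ and $\ell(\delta(P,Q')) < \ell(w)$ unverified.

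The genuine gap is in your justification that the appended path is again $P$-compatible, i.e.\ that $\proj_R Q_0 = Q'$ for the initial panel $Q_0$ of the inducted path. You invoke Lemma \ref{GeneralizationDMVM11Lemma17} to get that $\proj_R Q_0$ is a panel of $R$ parallel to $Q'$, and then Lemma \ref{DMVMLemma18}, claiming this "pins it down to $\{Q, Q'\}$". That is not what Lemma \ref{DMVMLemma18} gives: it says $\proj_R Q_0$ is either equal to $Q'$ or \emph{opposite} to $Q'$ in $R$, and a panel in a rank-$2$ spherical residue has many opposites (one through each chamber of $R$ opposite to a chamber of $Q'$), of which $Q$ is only one. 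So the dichotomy you use is false, and the final clause "$\proj_R P = Q$ rules out $Q$" does not close the argument because the undesirable case is not $\proj_R Q_0 = Q$ but rather $\proj_R Q_0$ being \emph{some} opposite of $Q'$. Note that the paper avoids this by defining $Q'$ itself as $\proj_R Q_0$ for an explicitly constructed opposite panel $Q_0 = \P_s(q)$, so the last compatibility condition becomes essentially tautological (modulo the point that the inductive hypothesis must produce a path whose initial panel is that same $Q_0$). Your sketch also leaves "the panel $Q'$ opposite to $Q$" ill-defined for the same reason: there is no \emph{the} opposite panel, and the whole point of the paper's construction of $q$ is to single out the right one.
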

\begin{proof}
	Let $(Q_0, \ldots, Q_k = Q)$ be a $P$-compatible path. Using Proposition \ref{projectiontransitive}$(a)$ we have $\proj_{Q_0}^P = \proj_{Q_0}^Q \circ \proj_Q^P$. By Lemma \ref{AB08Exercise5.168+projbijection+Mu97Lemma3.5}$(d)$ we have $\proj_{Q_0} P = Q_0$. Thus $\vert \proj_Q P \vert \geq 2$ and Lemma \ref{GeneralizationDMVMLemma13} finishes the claim.
	
	Now we assume that $P$ and $Q$ are parallel. We show the claim via induction on the distance $\ell := \ell(\delta(P, Q))$. If $\ell=1$ then $(Q)$ is a $P$-compatible path and we are done. Now we assume that $\ell > 1$. Let $x \in P$. Then there exists a chamber $e \in \Cm$ which is adjacent to a chamber in $Q$ and satisfies $\ell_*(x, \proj_Q x) -2 = \ell_*(x, e)$. Let $R$ be the unique rank $2$ residue containing $Q$ and $e$. By Lemma \ref{GeneralizationDMVM11Lemma17} the panels $Q$ and $\proj_R P$ are parallel. Since $Q$ and $\proj_R P$ are not opposite in $R$ we have $\proj_R P = Q$ by Lemma \ref{DMVMLemma18}. Note also that $\proj_R x = \proj_Q x$. Let $w = \delta_*(x, \proj_Q x)$. Let $q \in \Cm$ such that $\delta_{-\epsilon}(\proj_Q x, q) = w^{-1}$. Then $\delta_*(x, q) = 1_W$ by \cite[Lemma $5.140(2)$]{AB08}. Let $s$ be the type of $P$ and let $t$ be the type of $Q$. Then $\ell(wt) = \ell(w) -1$. In particular, $\delta_{-\epsilon}(q, \proj_Q q) = wt$. Note that for $v := tw^{-1}$ we have $\ell(tv) = \ell(w^{-1}) = \ell(w) = \ell(wt) +1 = \ell(v) +1$. Since $P, Q$ are parallel, we have $s = wtw^{-1}$ by Lemma \ref{GeneralizationDMVM11Lemma14}. This implies $v^{-1}tv = wtttw^{-1} = s$. Using Lemma \ref{DMVMLemma14} we obtain that the $s$-panel of $q$ and the $t$-panel of $\proj_Q q$ (i.e. $Q$) are parallel. Let $Q_0$ be the $s$-panel of $q$. By \cite[Lemma $17$]{DMVM11} $\proj_R Q_0$ is parallel to both, $Q_0$ and $Q$. Since $\ell(wr_J) = \ell(w) - \ell(r_J)$, where $J$ is the type of $R$, we have $\proj_R Q_0 \neq Q$. Using induction, there exists a $P$-compatible path $(Q_0, \ldots, Q_k = \proj_R Q_0)$. Clearly, $(Q_0, \ldots, Q_k, Q)$ is a compatible path. The fact that $\proj_{R(Q_k, Q)} P = \proj_R P = Q$ finishes the claim.
\end{proof}

\begin{theorem}\label{P-compatiblepath}
	Let $\epsilon \in \{+,-\}, s\in S$ and let $P, Q \subseteq \Cp$ be two parallel panels and let $c_- \in \Cm$ such that $\P_s(c_-)$ and $P$ are opposite and such that $\ell(\delta(P, Q)) +1 = \ell_*(c_-, \proj_Q c_-)$. Then every compatible path $(P_0 = P, \ldots, P_k = Q)$ is a $\P_s(c_-)$-compatible path.
\end{theorem}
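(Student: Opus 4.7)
The plan is to induct on the length $k$ of the compatible path. The base case $k = 0$ is immediate since then $P = Q$ and only the opposition hypothesis in the definition of $\P_s(c_-)$-compatibility has to be verified. For the inductive step, the strategy is to show that the length condition in the hypothesis forces a chain of one-step inequalities to become equalities, which pins down the projection of the single chamber $c_-$ at every step, and then to lift this to the statement about the whole panel $\P_s(c_-)$.

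For the boundary value, I would observe that $P_0$ and $\P_s(c_-)$ are opposite residues of type $\{s\}$ (with $r_{\{s\}} = s$), so Lemma \ref{Mu97Lemma3.4} gives $\delta_*(c_-, \proj_{P_0} c_-) = s$ and hence $\ell_*(c_-, \proj_{P_0} c_-) = 1$. Setting $R_j = R(P_{j-1}, P_j)$, $r_j = \proj_{R_j} c_-$ and $p_j = \proj_{P_j} c_-$, residue absorption (Lemma \ref{residueprojectionabsorbtion}) gives $p_j = \proj_{P_j} r_j$ and $p_{j-1} = \proj_{P_{j-1}} r_j$; expanding the twin projection formula then yields
\[
\ell_*(c_-, p_j) - \ell_*(c_-, p_{j-1}) = \ell_\epsilon(r_j, p_{j-1}) - \ell_\epsilon(r_j, p_j).
\]
Each term on the right is at most $\ell(r_{J_j}) - 1$, and by Lemma \ref{DMVMLemma18} this bound equals $\ell(\delta(P_{j-1}, P_j))$. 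Hence the left side is bounded above by $\ell(\delta(P_{j-1}, P_j))$, with saturation of this bound equivalent to $r_j = p_j$.

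Telescoping over $j$ and invoking Proposition \ref{DMVMProposition9}(c), one obtains $\ell_*(c_-, p_k) - \ell_*(c_-, p_0) \leq \ell(\delta(P_0, P_k))$, while the boundary value together with the hypothesis $\ell_*(c_-, p_k) = \ell(\delta(P, Q)) + 1$ makes the left-hand side equal to $\ell(\delta(P_0, P_k))$. Therefore every one-step inequality must be an equality, giving $r_j = p_j$ for all $j$ and, in particular, $\ell_*(c_-, p_{k-1}) = \ell(\delta(P_0, P_{k-1})) + 1$. The prefix $(P_0, \ldots, P_{k-1})$ is then a compatible path satisfying all the hypotheses of the theorem with $P_{k-1}$ in the role of $Q$, so by the inductive hypothesis it is $\P_s(c_-)$-compatible; by Theorem \ref{parallelPcomppath} this entails that $\P_s(c_-)$ and $P_{k-1}$ are parallel.

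The final and main obstacle is to upgrade $r_k = p_k \in P_k$ to $\proj_{R_k} \P_s(c_-) = P_k$. I would apply Lemma \ref{GeneralizationDMVM11Lemma17} with the roles of $\epsilon$ and $-\epsilon$ exchanged, using the parallel panels $\P_s(c_-) \subseteq \C_{-\epsilon}$ and $P_{k-1} \subseteq \Cp$ together with the spherical residue $R_k$ containing $P_{k-1}$. This forces $\proj_{R_k} \P_s(c_-)$ to be a panel parallel to $P_{k-1}$. Since both panels lie in the rank $2$ residue $R_k$, Lemma \ref{DMVMLemma18} leaves only two options: they are equal, or opposite in $R_k$. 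Equality would put $p_k = \proj_{R_k} c_-$ in $P_{k-1}$, contradicting the disjointness of the opposite panels $P_{k-1}$ and $P_k$ inside $R_k$. Hence $\proj_{R_k} \P_s(c_-)$ is opposite to $P_{k-1}$ in $R_k$, and being the unique such panel it equals $P_k$. Combined with the inductive conclusion for the prefix, this shows that $(P_0, \ldots, P_k)$ is $\P_s(c_-)$-compatible.
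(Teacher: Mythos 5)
Your proof is correct, and it is a genuinely different argument from the one in the paper; only the wording of the very last step deserves tightening. You prove the statement by induction on the path length $k$, whereas the paper argues directly for all indices $i$ at once. For the length relations $\ell_*(c_-, p_i) = \ell(\delta(P_0,P_i))+1$, the paper works inside a carefully chosen twin apartment $A(c_+,c_-)$ (with $c_+ \in P \setminus \{\proj_P c_-\}$) and uses convexity to place $\proj_{P_i}\proj_Q c_-$ in it; your telescoping and saturation argument, built purely on (Tw$2$) and Lemma \ref{residueprojectionabsorbtion}, reaches the same conclusion while avoiding twin apartments entirely and only ever projecting the single chamber $c_-$. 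The paper then proves parallelism of $\P_s(c_-)$ with each $P_i$ by computing $\delta_*$ explicitly and invoking Lemma \ref{GeneralizationDMVM11Lemma14}, and verifies $\proj_{R(P_{i-1},P_i)}x = \proj_{P_i}x$ for every $x \in \P_s(c_-)$; you instead get parallelism of $\P_s(c_-)$ with $P_{k-1}$ for free from the inductive hypothesis together with Theorem \ref{parallelPcomppath}, do the chamber-level check only for $c_-$ and only at the last step, and then upgrade to the panel-level statement $\proj_{R_k}\P_s(c_-) = P_k$ via the rank-$2$ dichotomy of Lemma \ref{DMVMLemma18}. That structural upgrade is a nice replacement for the paper's pointwise computation over all of $\P_s(c_-)$.

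One phrase should be made precise: you write that $\proj_{R_k}\P_s(c_-)$ is opposite to $P_{k-1}$ in $R_k$ and ``being the unique such panel it equals $P_k$.'' There are of course many panels opposite to $P_{k-1}$ in $R_k$; what makes the conclusion valid is that opposition fixes the type (namely $r_{J_k}t_1r_{J_k}$ where $t_1$ is the type of $P_{k-1}$), so that a panel opposite to $P_{k-1}$ and containing the chamber $p_k$ is unique. You do know $p_k = \proj_{R_k}c_- \in \proj_{R_k}\P_s(c_-)$ and $p_k \in P_k$, so the argument stands once this is said, but the uniqueness claim as written reads as if it referred to the opposition relation alone.
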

\begin{proof}
	At first we show $\ell_*(c_-, \proj_{P_i} c_-) = \ell(P, P_i) +1$. We have $\ell_*(c_-, \proj_P \proj_Q c_-) \geq \ell_*(c_-, \proj_Q c_-) - \ell_{\epsilon}(\proj_Q c_-, \proj_P \proj_Q c_-) = 1$ by hypothesis. Since the panels $\P_s(c_-), P$ are opposite, we obtain $\proj_P \proj_Q c_- = \proj_P c_-$ by Lemma \ref{Mu97Lemma3.4}. Let $c_+ \in P\backslash \{ \proj_P c_- \}$. Then $c_+ \in c_-^{\mathrm{op}}$. Since $c_+ \neq \proj_P c_- = \proj_P \proj_Q c_-$, we have $\ell_{\epsilon}(\proj_Q c_-, c_+) = \ell(\delta(P, Q)) +1$. This yields $\proj_Q c_- \in A(c_+, c_-)$, since $\ell_{\epsilon}(c_+, \proj_Q c_-) = \ell_*(c_-, \proj_Q c_-)$. Now we will show that $A(c_+, c_-) \cap P_i \neq \emptyset$. Let $z_i := \proj_{P_i} \proj_Q c_-$. Then by Proposition \ref{DMVMProposition9}$(a), (c), (d)$ the following hold:
	\begin{align*}
	\ell_{\epsilon}(\proj_Q c_-, c_+) &= \ell(\delta(P, Q)) + 1 \\
	&= \ell(\delta(P, P_i)) + \ell(\delta(P_i, Q)) + 1 \\
	&= \ell_{\epsilon}(\proj_Q c_-, z_i) + \ell_{\epsilon}(z_i, \proj_P z_i) + \ell_{\epsilon}(\proj_P \proj_Q c_-, c_+) \\
	&= \ell_{\epsilon}(\proj_Q c_-, z_i) + \ell_{\epsilon}(z_i, c_+).
	\end{align*}
	This yields that $z_i$ lies on a minimal gallery between $\proj_Q c_-$ and $c_+$. The definition of convexity implies that any element on a minimal gallery between two chambers of a convex set is contained in the convex set. Since $A_{\epsilon}(c_+, c_-)$ is convex, we infer $z_i \in A(c_+, c_-) \cap P_i \neq \emptyset$. By Lemma \ref{Twinapartment} we obtain $A(c_+, c_-) \cap P_i = \{ \proj_{P_i} c_+, \proj_{P_i} c_- \}$. We put $c_i := \proj_{P_i} c_-$. Then the following hold for every $0 \leq i \leq k$:
	\begin{align*}
	\ell_*(c_-, c_i) = \ell_{\epsilon}(c_+, c_i) = \ell_{\epsilon}(c_+, \proj_{P_i} c_+) + \ell_{\epsilon}(\proj_{P_i} c_+, c_i) = \ell(\delta(P, P_i)) + 1.
	\end{align*}
	Now we want to show that the panels $P_i$ and $\P_s(c_-)$ are parallel. Let $P_i$ be a $t$-panel and let $w' = \delta(P, P_i)$. Then we have $t = w'^{-1} sw'$ by Lemma \ref{DMVMLemma14}. Let $w = \delta_*(c_-, \proj_{P_i} c_-)$ and let $\proj_P c_- \neq c_+\in P$. Note that $\proj_P c_- = \proj_P \proj_{P_i} c_-$ for any $0 \leq i \leq k$ as above. Then $w = \delta_*(c_-, \proj_{P_i} c_-) = \delta_{\epsilon}(c_+, \proj_{P_i} c_-) = w't = sw'$ by the definition of $A(c_+, c_-)$. We have $w^{-1} s w = (sw')^{-1} s (sw') = t$, $\ell(sw) = \ell(w') = \ell(sw') -1 = \ell(w) -1$ and hence Lemma \ref{GeneralizationDMVM11Lemma14} implies that the panels $\P_s(c_-)$ and $P_i$ are parallel. In particular, we have $\ell(x, \proj_{P_i} x) = \ell(c_-, \proj_{P_i} c_-)$ by Lemma \ref{GeneralizationDMVM11Lemma14}.
	
	We now show the claim. Since $\P_s(c_-)$ and $P_i$ are parallel, it suffices to show that $\proj_{R(P_{i-1}, P_i)} x = \proj_{P_i} x$ for all $1 \leq i \leq k$ and $x\in \P_s(c_-)$. Let $1 \leq i \leq k$, let $\proj_{P_{i-1}} x \neq y \in P_{i-1}$ and let $J_i \subseteq S$ be the type of $R(P_{i-1}, P_i)$. Let $r_i := \proj_{R(P_{i-1}, P_i)} x, p_{i-1} := \proj_{P_{i-1}} x$ and $p_i := \proj_{P_i} x$. Then we obtain:
	\begin{align*}
	\ell_*(x, r_i) - \ell(r_{J_i}) &\leq \ell_*(x, y) = \ell_*(x, p_{i-1}) - \ell_{\epsilon}(p_{i-1}, y) = \ell_*(x, r_i) - \ell_{\epsilon}(r_i, p_{i-1}) - 1
	\end{align*}
	%	\begin{align*}
	%	\ell_*(x, r_i) - \ell(r_{J_i}) &\leq \ell_*(x, r_i) - \ell_-(x, y) \\
	%	&= \ell_*(x, y) \\
	%	&= \ell_*(x, \proj_{P_{i-1}} x) - \ell_-(\proj_{P_{i-1}} x, y) \\
	%	&= \ell_*(x, r_i) - \ell_-(r_i, \proj_{P_{i-1}} x) - 1
	%	\end{align*}
	Therefore, we have $\ell_{\epsilon}(r_i, p_{i-1}) \leq \ell(r_{J_i}) -1 = \ell(\delta(P_{i-1}, P_i))$. Using Proposition \ref{DMVMProposition9}$(c)$, we deduce:
	\begin{align*}
	\ell_*(x, r_i) - \ell_{\epsilon}(r_i, p_i) &= \ell_*(x, p_i) \\
	&= \ell(\delta(P, P_i)) +1 \\
	&= \ell(\delta(P, P_{i-1})) + \ell(\delta(P_{i-1}, P_i)) +1 \\
	&= \ell_*(x, p_{i-1}) + \ell(\delta(P_{i-1}, P_i)) \\
	&= \ell_*(x, r_i) - \ell_{\epsilon}(r_i, p_{i-1}) + \ell(\delta(P_{i-1}, P_i)) \\
	&\geq \ell_*(x, r_i).
	\end{align*}
	This implies $\ell_{\epsilon}(r_i, p_i) = 0$ and the claim follows.
\end{proof}

\begin{definition}\label{Definition: wall-connected}
	Let $\epsilon \in \{+,-\}, s \in S, c \in \Cm$. Two panels $Q_1, Q_2 \subseteq \Cp$ are called \textit{wall-adjacent of type $(c, s)$} if both panels $Q_1, Q_2$ are opposite to $\P_s(c)$ and if there exist a panel $T\subseteq \Cp$ and $\P_s(c)$-compatible paths $(P_0 = Q_1, \ldots, P_k = T)$ and $(P_0' = Q_2, \ldots, P_{k'}' = T)$ of the same length and type.
	
	For any $\epsilon \in \{+, -\}$ and any pair $(c, s) \in \C_{\epsilon} \times S$ we define a graph $\Gamma_s(c)$ with vertex set $\{ Q \subseteq \C_{-\epsilon} \mid \P_s(c), Q \text{ are opposite panels} \}$ and two vertices $Q_1, Q_2$ are joined by an edge if $Q_1, Q_2$ are wall-adjacent of type $(c, s)$. The twin building $\Delta$ is called \textit{wall-connected} if the graph $\Gamma_s(c)$ is connected for each pair $(c, s) \in \C \times S$. We refer to Lemma \ref{parallelpanelssamewall} for the motivation of the terminology \textit{wall-connected}.
\end{definition}

\section{Isometries}

\subsection*{Definition and basic facts}

\begin{convention}
	From now on all buildings (and hence all twin buildings) are assumed to be thick.
\end{convention}

Let $\Delta = (\Delta_+, \Delta_-, \delta_*), \Delta' = (\Delta_+', \Delta_-', \delta_*')$ be two twin buildings of type $(W, S)$. We define $\C', \Delta_+', \Delta_-', \delta', \ell'$ as in the case of $\Delta$. Let $\mathcal{X} \subseteq \C, \mathcal{X}' \subseteq \C'$. A mapping $\phi: \mathcal{X} \to \mathcal{X}'$ is called \textit{isometry} if the following conditions are satisfied:
\begin{enumerate}[label=(Iso\arabic*), leftmargin=*]
	\item The mapping $\phi$ is bijective.
	
	\item For $\epsilon \in \{+,-\}$ we have $\phi(\mathcal{X} \cap \Cp) \subseteq \Cp'$.
	
	\item If $x, y \in \mathcal{X}$ then $\delta'(\phi(x), \phi(y)) = \delta(x, y)$.
\end{enumerate}

It is easy to see that $\phi^{-1}$ is also an isometry. Given $\mathcal{X} \subseteq \C, \mathcal{X}' \subseteq \C'$, an isometry $\phi: \mathcal{X} \to \mathcal{X}'$ and $(y, y') \in \C \times \C'$, then the pair $(y, y')$ will be called \textit{$\phi$-admissible} if the mapping $y \mapsto y'$ extends $\phi$ to an isometry from $\mathcal{X} \cup \{y\}$ onto $\mathcal{X}' \cup \{y'\}$. Let $(x, x')$ be a $\phi$-admissible pair. Then $(x', x)$ is $\phi^{-1}$-admissible. In particular, $(x, \phi(x))$ is $\phi$-admissible for any $x\in \mathcal{X}$. Let $\mathcal{Y} \subseteq \C, \mathcal{Y}' \subseteq \C'$ and $\psi: \mathcal{Y} \to \mathcal{Y}'$ be another isometry. Then the pair $(\phi, \psi)$ will be called \textit{admissible}, if there exists an isometry from $\mathcal{X} \cup \mathcal{Y}$ onto $\mathcal{X}' \cup \mathcal{Y}'$ such that $\phi$ and $\psi$ are restrictions of that isometry.

\begin{lemma}\label{adjazenzerhaltendeBijektionIsometrie}
	Let $\epsilon \in \{+,-\}$ and $\phi: \Cp \to \Cp'$ be a bijection. If $\delta_{\epsilon}(x, y) = \delta_{\epsilon}'(\phi(x), \phi(y))$ for any $x, y \in \Cp$ with $\delta_{\epsilon}(x, y) \in S$, then $\phi$ is an isometry.
\end{lemma}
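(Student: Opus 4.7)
The plan is to prove this by induction on $n := \ell_{\epsilon}(x, y)$. Note that the only nontrivial axiom to verify is (Iso3), since (Iso1) is given and (Iso2) is automatic (with $\mathcal{X} = \C_\epsilon$). Thus the whole statement reduces to showing that $\delta_\epsilon(x,y) = \delta_\epsilon'(\phi(x), \phi(y))$ for all $x,y \in \C_\epsilon$.

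For the base cases, $n = 0$ is trivial ($x = y$ implies $\phi(x) = \phi(y)$ and $\delta_\epsilon'(\phi(x), \phi(y)) = 1_W$) and $n = 1$ is exactly the hypothesis. For the inductive step, assume the statement for all pairs of $\ell_\epsilon$-distance less than $n \geq 2$, and let $x, y \in \C_\epsilon$ with $\ell_\epsilon(x,y) = n$. Choose a minimal gallery $(x = x_0, x_1, \ldots, x_n = y)$ in $\Delta_\epsilon$ and set $z := x_{n-1}$. Then $\delta_\epsilon(z, y) = s$ for some $s \in S$, and $w := \delta_\epsilon(x, z)$ satisfies $\ell(ws) = \ell(w) + 1 = n$ and $ws = \delta_\epsilon(x, y)$.

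Now apply the induction hypothesis to the pair $(x, z)$ to get $\delta_\epsilon'(\phi(x), \phi(z)) = w$, and apply the assumption to the $s$-adjacent pair $(z, y)$ to get $\delta_\epsilon'(\phi(z), \phi(y)) = s$; in particular $\phi(z) \neq \phi(y)$ by bijectivity. Axiom (Bu2) applied in $\Delta_\epsilon'$ then yields $\delta_\epsilon'(\phi(x), \phi(y)) \in \{w, ws\}$, and since $\ell(ws) = \ell(w) + 1$, the second clause of (Bu2) forces $\delta_\epsilon'(\phi(x), \phi(y)) = ws = \delta_\epsilon(x, y)$, completing the induction.

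There is no real obstacle here: once one picks $z$ as the penultimate chamber of a minimal gallery, everything follows from (Bu2) and induction. The only subtle point is ensuring that the inductive step uses the first (length-increasing) branch of (Bu2), which is guaranteed by the minimality of the chosen gallery.
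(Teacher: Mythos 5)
Your proof is correct. The paper itself does not give an argument here---it simply cites \cite[Lemma $5.61$]{AB08}---so there is nothing in the paper to compare against, but your induction on $\ell_\epsilon(x,y)$, peeling off the penultimate chamber of a minimal gallery and closing the step via the length-increasing branch of (Bu2), is exactly the standard proof of that cited lemma. All the details are in order: (Iso1) and (Iso2) are immediate, the base cases are handled, the existence of $z$ with $\delta_\epsilon(z,y)=s\in S$ and $\ell(ws)=\ell(w)+1$ follows from minimality of the gallery, and (Bu2) applied to $\phi(x),\phi(z),\phi(y)$ forces $\delta_\epsilon'(\phi(x),\phi(y))=ws$.
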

\begin{proof}
	This is \cite[Lemma $5.61$]{AB08}.
\end{proof}

\begin{lemma}\label{ZusammengesetzteIsometrie}
	Let $\mathcal{X}, \mathcal{Y} \subseteq \C, \mathcal{X}', \mathcal{Y}' \subseteq \C'$ be such that $\mathcal{X} \cap \mathcal{Y} = \emptyset$ and $\mathcal{X}' \cap \mathcal{Y}' = \emptyset$. Let $\phi: \mathcal{X} \to \mathcal{X}'$ and $\psi: \mathcal{Y} \to \mathcal{Y}'$ be two isometries such that $(z, \psi(z))$ is $\phi$-admissible for any $z\in \mathcal{Y}$. Then $(\phi, \psi)$ is admissible.
\end{lemma}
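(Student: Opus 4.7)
The plan is to define the combined map $\chi: \mathcal{X} \cup \mathcal{Y} \to \mathcal{X}' \cup \mathcal{Y}'$ by $\chi|_{\mathcal{X}} = \phi$ and $\chi|_{\mathcal{Y}} = \psi$; the disjointness hypothesis $\mathcal{X} \cap \mathcal{Y} = \emptyset$ makes this well-defined, and I then verify the three isometry axioms (Iso1)--(Iso3) for $\chi$.

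The distance axiom (Iso3) is the substantive part. For pairs lying entirely in $\mathcal{X}$ or entirely in $\mathcal{Y}$, the identity $\delta'(\chi(u), \chi(v)) = \delta(u, v)$ is immediate because $\phi$ and $\psi$ are already isometries. For a mixed pair $(x, z) \in \mathcal{X} \times \mathcal{Y}$, I would invoke the hypothesis directly: since $(z, \psi(z))$ is $\phi$-admissible, there exists, by the definition of admissibility, an isometry from $\mathcal{X} \cup \{z\}$ onto $\mathcal{X}' \cup \{\psi(z)\}$ that extends $\phi$ and sends $z$ to $\psi(z)$. Applying (Iso3) to this extension yields $\delta'(\phi(x), \psi(z)) = \delta(x, z)$, which is exactly the required identity. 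Axiom (Iso2) is immediate because $\phi$ and $\psi$ individually send $\mathcal{X} \cap \C_\epsilon$ and $\mathcal{Y} \cap \C_\epsilon$ into $\C_\epsilon'$ for $\epsilon \in \{+,-\}$.

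Bijectivity (Iso1) is the only place where a short extra argument is needed. Both $\phi$ and $\psi$ are bijections onto their ranges, so it suffices to show that $\phi(\mathcal{X}) \cap \psi(\mathcal{Y}) = \emptyset$ (injectivity) and that nothing else is required for surjectivity beyond $\chi(\mathcal{X} \cup \mathcal{Y}) = \mathcal{X}' \cup \mathcal{Y}'$. Suppose towards a contradiction that $\phi(x) = \psi(z)$ for some $x \in \mathcal{X}$, $z \in \mathcal{Y}$. Then $\delta'(\phi(x), \psi(z)) = 1_W$, and by the distance identity just established this forces $\delta(x, z) = 1_W$, hence $x = z$, contradicting $\mathcal{X} \cap \mathcal{Y} = \emptyset$.

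I do not expect a real obstacle here: the lemma is essentially an unpacking of the definition of $\phi$-admissibility, the only mildly subtle point being that the disjointness of the images on different parts of the domain has to be deduced from the distance identity rather than assumed. Once (Iso3) on mixed pairs is established, bijectivity and half-preservation follow directly, and $\chi$ provides the required common extension, so $(\phi, \psi)$ is admissible.
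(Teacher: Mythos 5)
Your proof is correct and gives a self-contained direct verification; the paper itself merely cites an external result (Lemma~4.1 of~\cite{BCM21}), so your write-up essentially fills in what that citation packages. The core idea — that for a mixed pair $(x,z)\in\mathcal{X}\times\mathcal{Y}$ the $\phi$-admissibility of $(z,\psi(z))$ supplies an isometry on $\mathcal{X}\cup\{z\}$ from which one reads off $\delta'(\phi(x),\psi(z))=\delta(x,z)$ — is exactly the right (and the only nontrivial) step.

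One small remark: your contradiction argument for disjointness of the images is unnecessary, since $\mathcal{X}'\cap\mathcal{Y}'=\emptyset$ is already a hypothesis of the lemma, and $\phi(\mathcal{X})=\mathcal{X}'$, $\psi(\mathcal{Y})=\mathcal{Y}'$ because $\phi,\psi$ are surjective onto their codomains. Moreover, as stated the argument has a hidden assumption: $\delta(x,z)=1_W$ forces $x=z$ only when $x$ and $z$ lie in the same half of the twin building (in different halves $\delta_*=1_W$ means opposite, not equal). That can be repaired via (Iso2) — $\phi(x)=\psi(z)$ forces $x$ and $z$ into the same half — but since the whole step is superfluous it is cleaner simply to appeal to the hypothesis.
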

\begin{proof}
	This is a consequence of \cite[Lemma $4.1$]{BCM21}.
\end{proof}

\begin{lemma}\label{Mu97Lemma4.2}
	Let $J$ be a spherical subset of $S$, let $R \subseteq \C, R' \subseteq \C'$ be $J$-residues, let $\phi: R \to R'$ be an isometry, and let $(x, x')$ be a $\phi$-admissible pair. Then $\phi(\proj_R x) = \proj_{R'} x'$.
\end{lemma}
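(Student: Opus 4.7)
The plan is to characterize $\proj_R x$ by its defining length identity and then transport this characterization along $\phi$ using (Iso3) together with the $\phi$-admissibility of $(x, x')$.

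First, observe that by (Iso2) the residues $R$ and $R'$ lie in corresponding halves, and by $\phi$-admissibility so do $x$ and $x'$. Hence either both $x$ and $R$ lie in the same half of their respective twin buildings, or they lie in opposite halves, with the same configuration on the primed side. Set $z := \proj_R x$. In the same-half case (using the projection statement recalled in Section 2 applied inside the appropriate half) the chamber $z$ is the unique element of $R$ satisfying
\begin{align*}
\ell(x, y) = \ell(x, z) + \ell(z, y) \quad \text{for every } y \in R,
\end{align*}
while in the opposite-half case (using the projection statement for twin buildings recalled in Section 3) $z$ is the unique element of $R$ satisfying
\begin{align*}
\ell(x, y) = \ell(x, z) - \ell(z, y) \quad \text{for every } y \in R.
\end{align*}

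Next I would translate every length on the right-hand side to the primed building. Since $\phi$ is an isometry on $R$, we have $\ell'(\phi(z), \phi(y)) = \ell(z, y)$ for every $y \in R$. Since $(x, x')$ is $\phi$-admissible, the extension of $\phi$ by $x \mapsto x'$ is itself an isometry from $R \cup \{x\}$ onto $R' \cup \{x'\}$, which yields $\ell'(x', \phi(y)) = \ell(x, y)$ for every $y \in R$ and, in particular, $\ell'(x', \phi(z)) = \ell(x, z)$. Substituting these into the defining identity for $z$ produces exactly the defining length identity for the projection of $x'$ onto $R'$, with $\phi(z)$ in the role of the projection chamber and $\phi(y)$ ranging over all of $R'$ (by bijectivity of $\phi : R \to R'$).

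The uniqueness clause of the projection characterization in $\Delta'$ then forces $\phi(z) = \proj_{R'} x'$, which is the claim. I do not foresee any real obstacle: the argument is a direct transport of structure along an isometry, and the only bookkeeping is to keep track of the sign according to whether $x$ lies in the same half as $R$ or not, both cases being handled uniformly by the same substitution.
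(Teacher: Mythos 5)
Your argument is correct, and it is the natural direct proof: characterize $\proj_R x$ by its length identity (with the sign depending on whether $x$ and $R$ lie in the same half or in opposite halves), transport every length on both sides along the isometry $\phi$ extended by $x \mapsto x'$, and invoke the uniqueness clause of the projection in $\Delta'$. The paper, by contrast, does not give an argument at all — it simply cites Ronan, \emph{Local isometries of twin buildings}, Lemma $(4.4)$. So you have supplied a self-contained proof where the paper outsources, which is a reasonable trade: your version makes the dependence on the gate property and on $\phi$-admissibility explicit, at the cost of repeating a standard fact. One small point worth flagging for completeness: the identity $\ell'(x', \phi(y)) = \ell(x, y)$ you use relies not merely on $(x,x')$ being $\phi$-admissible for the full isometry $\phi$, but on the restriction of that extension to $R \cup \{x\}$ still being an isometry, which is immediate but should be said if one is writing this out in full.
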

\begin{proof}
	This is \cite[Lemma $(4.4)$]{Ro00}.
\end{proof}

\begin{lemma}\label{Mu97Lemma4.3}
	Let $R_+, R_- \subseteq \C$ be spherical and parallel residues in $\Delta$ and $R_+', R_-' \subseteq \C'$ be spherical and parallel residues in $\Delta'$, let $\phi: R_+ \cup R_- \to R_+' \cup R_-'$ be an isometry such that $\phi(R_+) = R_+'$ and $\phi(R_-) = R_-'$. Then $\phi(x) = \proj_{R_{\epsilon}'} \phi(\proj_{R_{-\epsilon}} x)$ for each $x\in R_{\epsilon}$ for each $\epsilon \in \{+,-\}$.
\end{lemma}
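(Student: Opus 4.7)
The plan is to apply Lemma \ref{Mu97Lemma4.2} to the restriction $\phi|_{R_{\epsilon}} : R_{\epsilon} \to R_{\epsilon}'$ together with the $\phi|_{R_{\epsilon}}$-admissible pair $(\proj_{R_{-\epsilon}} x, \phi(\proj_{R_{-\epsilon}} x))$. The key observation is that the parallelism hypothesis forces the projection maps $\proj_{R_{\epsilon}}^{R_{-\epsilon}}$ and $\proj_{R_{-\epsilon}}^{R_{\epsilon}}$ to be bijections inverse to each other: this is Lemma \ref{parallelresidues} if $R_+, R_-$ lie in the same half of $\Delta$, and Lemma \ref{AB08Exercise5.168+projbijection+Mu97Lemma3.5}(b) if they lie in opposite halves. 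In particular, every $x \in R_{\epsilon}$ satisfies $x = \proj_{R_{\epsilon}}(\proj_{R_{-\epsilon}} x)$.

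Concretely, I would fix $\epsilon \in \{+,-\}$ and $x \in R_{\epsilon}$ and set $y := \proj_{R_{-\epsilon}} x \in R_{-\epsilon}$. Because $\phi$ is already an isometry on the whole of $R_+ \cup R_-$ with $\phi(R_{-\epsilon}) = R_{-\epsilon}'$, its restriction to $R_{\epsilon} \cup \{y\}$ is an isometry onto $R_{\epsilon}' \cup \{\phi(y)\}$, which is precisely the statement that the pair $(y, \phi(y))$ is $\phi|_{R_{\epsilon}}$-admissible. Since $\phi$ preserves distances, $\phi|_{R_{\epsilon}} : R_{\epsilon} \to R_{\epsilon}'$ is an isometry between two residues of the same spherical type $J$, so Lemma \ref{Mu97Lemma4.2} yields $\phi(\proj_{R_{\epsilon}} y) = \proj_{R_{\epsilon}'} \phi(y)$. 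Combining this with the parallelism identity $\proj_{R_{\epsilon}} y = x$ from the previous paragraph gives $\phi(x) = \proj_{R_{\epsilon}'} \phi(\proj_{R_{-\epsilon}} x)$, as required.

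No serious obstacle is anticipated. The only point meriting a sanity check is that Lemma \ref{Mu97Lemma4.2} is formulated so that the chamber in the $\phi$-admissible pair can lie anywhere in $\C$, and that the projection $\proj_R$ is defined in Section $2$ (for chambers in the same half as $R$) and in Section $3$ (for chambers in the opposite half of $R$) by the same universal characterization. This means a single invocation of Lemma \ref{Mu97Lemma4.2} handles both geometric configurations uniformly, and no case distinction between same-half and opposite-half parallel residues is required in the argument itself.
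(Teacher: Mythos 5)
Your proof is correct and follows exactly the route the paper indicates: apply Lemma \ref{Mu97Lemma4.2} to $\phi|_{R_{\epsilon}}$ with the $\phi|_{R_{\epsilon}}$-admissible pair $(\proj_{R_{-\epsilon}}x,\, \phi(\proj_{R_{-\epsilon}}x))$, and use the parallelism bijection from Lemma \ref{parallelresidues} (same half) or Lemma \ref{AB08Exercise5.168+projbijection+Mu97Lemma3.5}(b) (different halves) to identify $\proj_{R_{\epsilon}}\proj_{R_{-\epsilon}}x$ with $x$. The paper's proof is merely a citation of these same three ingredients, and your write-up supplies the details faithfully.
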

\begin{proof}
	This is a consequence of the previous lemma, Lemma \ref{parallelresidues} and Lemma \ref{AB08Exercise5.168+projbijection+Mu97Lemma3.5}$(b)$.
\end{proof}

\begin{lemma}\label{Mu97Lemma4.6}
	Let $\phi_+: \C_+ \to \C_+'$ be an isometry, let $(x, x') \in \C_- \times \C_-'$ such that $\phi_+(x^{\mathrm{op}}) \subseteq (x')^{\mathrm{op}}$. Then $(x, x')$ is a $\phi_+$-admissible pair.
\end{lemma}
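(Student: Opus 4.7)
The plan is to verify axiom (Iso$3$) for the extension $\phi_+ \cup \{(x,x')\}$; since (Iso$1$) and (Iso$2$) are immediate and (Iso$3$) already holds on pairs in $\C_+$, it suffices to show that $\delta_*'(\phi_+(y), x') = \delta_*(y, x)$ for every $y \in \C_+$. I would prove this by induction on $n := \ell_*(y, x)$. The base case $n = 0$ is direct: $y \in x^{\mathrm{op}}$ together with the hypothesis $\phi_+(x^{\mathrm{op}}) \subseteq (x')^{\mathrm{op}}$ gives $\delta_*'(\phi_+(y), x') = 1_W = \delta_*(y, x)$.

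For the inductive step suppose $n \geq 1$, set $w := \delta_*(y, x)$, pick $s \in S$ with $\ell(sw) = n - 1$, and invoke (Tw$3$) to produce $y_0 \in \P_s(y)$ with $\delta_*(y_0, x) = sw$. Put $P := \P_s(y)$. The projection formula of \cite[Lemma $5.149$]{AB08}, applied to the spherical panel $P$, identifies $\proj_P x$ as the unique chamber of $P$ of maximal value of $\ell_*(\cdot, x)$ and shows that every other chamber of $P$ has $\ell_*(\cdot, x)$ exactly one less. Since $\ell_*(y, x) = n$ and $\ell_*(y_0, x) = n - 1$ with both chambers in $P$, the only consistent option is $\proj_P x = y$; hence every $c \in P \setminus \{y\}$ has $\ell_*(c, x) = n - 1$. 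Lemma \ref{AB08Lemma5.139}, anchored at $y_0$, forces $\delta_*(c, x) \in \{w, sw\}$ for each such $c$, so $\delta_*(c, x) = sw$. The induction hypothesis therefore gives $\delta_*'(\phi_+(c), x') = sw$ for every $c \in P \setminus \{y\}$.

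Now pass to the $s$-panel $P' := \phi_+(P)$ in $\C_+'$. Lemma \ref{AB08Lemma5.139} anchored at $\phi_+(y_0)$ restricts every chamber of $P'$ to codistance in $\{w, sw\}$ with $x'$. Applying \cite[Lemma $5.149$]{AB08} to $P'$ on the primed side, together with the length relation $\ell(w) = \ell(sw) + 1$, shows that exactly one chamber of $P'$ carries codistance $w$ with $x'$ while all others carry $sw$; the alternative (every chamber carrying $sw$) would force $P'$ to have a single chamber, contradicting thickness. Because the previous paragraph already assigns codistance $sw$ to every chamber of $P' \setminus \{\phi_+(y)\}$, the distinguished chamber of $P'$ with codistance $w$ must be $\phi_+(y)$, giving $\delta_*'(\phi_+(y), x') = w$ and completing the induction. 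The main conceptual hurdle is this projection-dichotomy inside the two paired panels $P$ and $P'$: the twin-projection formula has to be used to move the ``extremal chamber'' information from $P$ to $P'$ via $\phi_+$, leveraging the induction hypothesis on the remaining chambers of $P$.
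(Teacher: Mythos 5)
Your proof is correct, and it supplies something the paper itself does not: the paper's proof of this lemma is a bare citation of \cite[Lemma $(7.4)$]{Ro00}, so your argument serves as a self-contained replacement. The route you take — induction on $\ell_*(y,x)$, with the opposition hypothesis giving the base case — is the natural one. The decisive step is the projection dichotomy inside the $s$-panels $P$ and $P'=\phi_+(P)$: by \cite[Lemma $5.149$]{AB08} exactly one chamber of a panel carries the longer codistance while every other carries the shorter one, and combining this with Lemma \ref{AB08Lemma5.139} and the induction hypothesis applied to $P\setminus\{y\}$ forces $\phi_+(y)$ to be the distinguished chamber $\proj_{P'}x'$, hence $\delta_*'(\phi_+(y),x')=w$. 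Your exclusion of the degenerate case (all chambers of $P'$ at codistance $sw$) via the projection formula is also sound. One cosmetic streamlining: because you chose $s$ with $\ell(sw)=\ell(w)-1$, axiom (Tw$2$) already forces \emph{every} chamber $s$-adjacent to $y$ to have codistance $sw$ from $x$, which both produces $y_0$ without (Tw$3$) and immediately gives $\proj_Px=y$; but this shortcut only trims the argument and does not change its substance.
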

\begin{proof}
	This is \cite[Lemma $(7.4)$]{Ro00}.
\end{proof}

\begin{lemma}\label{MRLemma4.10}
	Let $x_- \in \C_-$, $x_-', y_-' \in \C_-'$ and let $\phi: \C_+ \cup \{x_-\} \to \C_+' \cup \{x_-'\}, \psi: \C_+ \cup \{x_-\} \to \C_+' \cup \{y_-'\}$ be two isometries such that $\phi(z) = \psi(z)$ for any $z\in x_-^{\mathrm{op}}$. Then $x_-' = y_-'$ and $\phi = \psi$.
\end{lemma}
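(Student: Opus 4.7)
The plan is to prove the claim in two stages: first deducing $x_-' = y_-'$ from the fact that the two isometries agree on $x_-^{\mathrm{op}}$, and then proving $\phi = \psi$ on all of $\C_+$ by induction on the codistance to $x_-$.

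For the first stage, set $\phi_+ := \phi|_{\C_+}$ and $\psi_+ := \psi|_{\C_+}$; both are bijective isometries $\C_+ \to \C_+'$. Since $\phi$ preserves the codistance between $x_-$ and chambers of $\C_+$, we have $z \in x_-^{\mathrm{op}}$ if and only if $\phi_+(z) \in (x_-')^{\mathrm{op}}$, so $\phi_+(x_-^{\mathrm{op}}) = (x_-')^{\mathrm{op}}$, and likewise $\psi_+(x_-^{\mathrm{op}}) = (y_-')^{\mathrm{op}}$. The assumption $\phi(z) = \psi(z)$ for $z \in x_-^{\mathrm{op}}$ then yields $(x_-')^{\mathrm{op}} = (y_-')^{\mathrm{op}}$. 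To convert this equality of opposite sets into an equality of chambers I would invoke the existence of a twin apartment $A$ containing both $x_-'$ and $y_-'$ (recalled from \cite[Proposition~$5.179(3)$]{AB08}): picking the unique chamber $u \in A \cap \C_+'$ that is opposite $x_-'$ inside $A$, we obtain $u \in (x_-')^{\mathrm{op}} = (y_-')^{\mathrm{op}}$, so $u$ is opposite $y_-'$ in the whole twin building, and, since $y_-' \in A$, also inside $A$. But the defining property of a twin apartment grants $u$ a unique opposite in $A \cap \C_-'$, forcing $x_-' = y_-'$.

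For the second stage, I would prove $\phi_+(y) = \psi_+(y)$ for every $y \in \C_+$ by induction on $\ell := \ell_*(x_-, y)$. The base case $\ell = 0$ is the hypothesis. For $\ell \geq 1$, let $w = \delta_*(x_-, y)$ and choose $s \in S$ with $\ell(ws) = \ell - 1$; axiom (Tw$2$) then forces every $y' \in \P_s(y) \setminus \{y\}$ to satisfy $\delta_*(x_-, y') = ws$, so $\ell_*(x_-, y') = \ell - 1 < \ell$ and by the inductive hypothesis $\phi_+(y') = \psi_+(y')$. Fix any $y_0' \in \P_s(y) \setminus \{y\}$ (available by thickness) and set $c := \phi_+(y_0') = \psi_+(y_0')$; both $\phi_+$ and $\psi_+$ then restrict to bijections from $\P_s(y)$ onto the same $s$-panel $\P_s(c) \subseteq \C_+'$, and because they already agree on the $|\P_s(y)| - 1$ chambers of $\P_s(y) \setminus \{y\}$, they must send the remaining chamber $y$ to the same chamber of $\P_s(c)$, closing the induction.

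Combined, the two stages give $\phi = \psi$. I expect the only delicate point to be the implication $(x_-')^{\mathrm{op}} = (y_-')^{\mathrm{op}} \Rightarrow x_-' = y_-'$: it is the single place where one genuinely uses the twin-building structure, namely the existence of a twin apartment through any two chambers together with the uniqueness of opposites inside twin apartments. The induction in the second stage is then just the observation that a bijection between two finite sets of equal cardinality is determined on the complement of any one element by its restriction to that complement, and thickness is used only to guarantee that a chamber $y_0' \neq y$ in $\P_s(y)$ actually exists.
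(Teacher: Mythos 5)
Your proof is correct. The paper itself does not prove this lemma but simply cites \cite[Lemma $4.10$]{MR95}, so there is no in-paper argument to compare against; your two-stage argument is a sound, self-contained derivation. Stage one (deducing $(x_-')^{\mathrm{op}} = (y_-')^{\mathrm{op}}$ from the isometry property, then using the existence of a twin apartment through $x_-'$ and $y_-'$ together with the uniqueness of opposites inside a twin apartment to conclude $x_-' = y_-'$) is exactly the right way to convert equality of opposite sets into equality of chambers. Stage two is a clean induction on $\ell_*(x_-, y)$: the base case is the hypothesis, and the inductive step uses (Tw$2$) to identify $\P_s(y)\setminus\{y\}$ as the set of chambers at codistance $\ell-1$, then exploits that $\phi_+$ and $\psi_+$ restrict to bijections of $\P_s(y)$ onto a common $s$-panel (since they agree on $y_0'$) to pin down the image of $y$ as the unique chamber missing from the common image of $\P_s(y)\setminus\{y\}$. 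One small remark: your closing comment frames the last step as a fact about bijections of finite sets, but no finiteness is needed or assumed here --- the argument works verbatim for panels of arbitrary cardinality since two bijections onto the same set that agree off one point must agree at that point as well. Thickness is used, as you note, only to guarantee $\P_s(y)\setminus\{y\}\neq\emptyset$; this is covered by the blanket convention at the start of Section~5 that all (twin) buildings are thick.
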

\begin{proof}
	This is \cite[Lemma $4.10$]{MR95}.
\end{proof}

\begin{theorem}\label{Ro00Theorem1}
	Let $(c_+, c_-)$ be a pair of opposite chambers in $\Delta$. The only isometry of $\Delta$ fixing $E_1(c_+) \cup \{c_-\}$ is the identity.
\end{theorem}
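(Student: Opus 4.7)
The plan is to reduce the statement in two stages by repeated application of Lemma~\ref{MRLemma4.10}, isolating a single inductive step as the technical core. Note first that $\phi$ preserves each half, since it fixes $c_+ \in E_1(c_+)$ and $c_-$.

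I would first argue that it suffices to show $\phi(y) = y$ for every $y \in c_-^{\mathrm{op}}$. The restrictions $\phi|_{\C_+ \cup \{c_-\}}$ and $\mathrm{id}|_{\C_+ \cup \{c_-\}}$ are both isometries mapping $c_-$ to itself; if they additionally agree on $c_-^{\mathrm{op}}$, then Lemma~\ref{MRLemma4.10} (with $x_- = c_-$ and $x_-' = y_-' = c_-$) forces them to coincide, giving $\phi|_{\C_+} = \mathrm{id}$. A second application of the same lemma propagates the identity to $\C_-$: for any $x \in \C_-$, the assignment $c \mapsto c$ on $\C_+$ and $x \mapsto \phi(x)$ defines an isometry $\psi_1 \colon \C_+ \cup \{x\} \to \C_+ \cup \{\phi(x)\}$, because $\delta(c, \phi(x)) = \delta(\phi(c), \phi(x)) = \delta(c, x)$ for every $c \in \C_+$. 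Together with $\psi_2 := \mathrm{id}|_{\C_+ \cup \{x\}}$, these two isometries agree on all of $\C_+$, hence on $x^{\mathrm{op}} \subseteq \C_+$, and Lemma~\ref{MRLemma4.10} then forces $\phi(x) = x$.

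The remaining core task is to show $\phi(y) = y$ for every $y \in c_-^{\mathrm{op}}$, which I would prove by induction on $n := \ell_+(c_+, y)$. The base cases $n \in \{0, 1\}$ are immediate, since $y \in E_1(c_+)$. For the inductive step $n \geq 2$, observe that $\phi(y)$ satisfies the following constraints: (i) $\phi(y) \in c_-^{\mathrm{op}}$ because $\phi(c_-) = c_-$; (ii) $\delta_+(c, \phi(y)) = \delta_+(c, y)$ for every $c \in E_1(c_+)$; and (iii) for every panel $P$ containing $c_+$, Lemma~\ref{Mu97Lemma4.2} applied to $\phi|_P = \mathrm{id}_P$ and to the $\phi$-admissible pair $(y, \phi(y))$ yields $\proj_P \phi(y) = \phi(\proj_P y) = \proj_P y$. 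Choosing a minimal gallery $c_+ = y_0, y_1, \ldots, y_n = y$ and combining (i)--(iii) with axioms (Tw2), (Tw3), and Lemma~\ref{AB08Lemma5.139} (which controls how the codistance to $c_-$ varies across a panel) should allow one to pin down $\phi(y) = y$.

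The main obstacle is precisely this inductive step: the panel $\P_{s_n}(y_{n-1})$ at the end of the gallery generically contains several chambers opposite to $c_-$, so neither the codistance datum alone nor the local data at $c_+$ alone suffices to identify $y$ within that panel. The rigidity must be propagated along the gallery by simultaneously using the codistance to $c_-$ together with the preserved projections onto the panels of $c_+$; this careful bookkeeping is the technical heart of the argument.
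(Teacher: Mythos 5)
The paper does not reprove this result: it is cited directly as Theorem~$(3.2)$ of \cite{Ro00}, so there is no in-paper proof against which to compare. Your two-stage reduction via Lemma~\ref{MRLemma4.10} is correct and reduces the problem to showing $\phi(y)=y$ for all $y\in c_-^{\mathrm{op}}$. However, the remaining step --- which you label the ``core task'' and explicitly leave open --- is not a deferred technicality but is essentially the full content of Ronan's theorem.

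Two concrete difficulties keep the proposed induction from going through as written. First, the intermediate chambers $y_1,\dots,y_{n-1}$ on a minimal gallery from $c_+$ to $y$ inside $\C_+$ need not belong to $c_-^{\mathrm{op}}$ (Lemma~\ref{AB08Lemma5.139} only bounds $\ell_*(c_-,y_i)$ by $i$), so the induction hypothesis is unavailable for $y_{n-1}$. Second, the constraints (i)--(iii) you list do not determine $\phi(y)$ uniquely: the terminal panel $\P_{s_n}(y_{n-1})$ will in general contain several chambers opposite to $c_-$ with identical projections onto the panels through $c_+$ --- exactly the obstruction you describe yourself. Replacing the gallery by one staying inside $c_-^{\mathrm{op}}$ would presuppose gallery-connectedness of $c_-^{\mathrm{op}}$, i.e.\ Condition $\co$, which Theorem~\ref{Ro00Theorem1} does not assume and which is precisely the hypothesis this paper aims to weaken. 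Ronan's proof closes the gap by a different mechanism (using twin apartments and retractions based at the opposite pair $(c_+,c_-)$), and without such additional input the proposal does not establish the theorem.
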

\begin{proof}
	This is \cite[Theorem $(3.2)$]{Ro00}.
\end{proof}

\begin{theorem}\label{Mu97Theorem1}
	Let $\Delta, \Delta'$ be $2$-spherical and of rank at least three. Let $(c_+, c_-), (c_+', c_-')$ be two pairs of opposite chambers in $\Delta$ and $\Delta'$, respectively, and let $\phi: E_2(c_+) \cup \{c_-\} \to E_2(c_+') \to \{c_-'\}$ be an isometry. Then $\phi$ extends to an isometry from $\C_+ \cup \{c_-\}$ onto $\C_+' \cup \{c_-'\}$.
\end{theorem}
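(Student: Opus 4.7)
The plan is to reduce the problem to extending the isometry on the positive half alone, and then to glue. By Lemma \ref{Mu97Lemma4.6} combined with Lemma \ref{ZusammengesetzteIsometrie}, it suffices to construct an isometry $\phi_+ \colon \C_+ \to \C_+'$ that extends $\phi|_{E_2(c_+)}$ and satisfies $\phi_+(c_-^{\mathrm{op}}) \subseteq (c_-')^{\mathrm{op}}$: Lemma \ref{Mu97Lemma4.6} then makes $(c_-, c_-')$ a $\phi_+$-admissible pair, and Lemma \ref{ZusammengesetzteIsometrie} (applied with $\mathcal{Y} = \{c_-\}$) glues $\phi_+$ and the map $c_- \mapsto c_-'$ into the desired extension on $\C_+ \cup \{c_-\}$.

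To construct $\phi_+$, I would proceed by induction on $n = \ell_+(c_+, d)$. The case $n \leq 2$ is given by hypothesis. For $n \geq 3$, pick a minimal gallery $c_+ = d_0, \ldots, d_n = d$ and consider the rank $2$ residue $R$ spanned by the panels through $d_{n-2}, d_{n-1}$ and through $d_{n-1}, d$. By the $2$-spherical hypothesis $R$ is spherical, so it is a finite building in which any partial isometry on a sub-residue extends uniquely. Since the induction already defines $\phi_+$ on $d_{n-2}, d_{n-1}$ together with the panel through them, this local isometry extends uniquely to an isometry $R \to R'$ (where $R'$ is the analogous spherical residue around $\phi_+(d_{n-1})$), and that extension supplies the value $\phi_+(d)$. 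The rank $\geq 3$ assumption ensures that $E_2(c_+)$ carries enough data to seed the induction and to carry it forward across rank $2$ residues in its $E_1$-neighborhood.

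The main obstacle is well-definedness of $\phi_+(d)$: the chamber $d$ typically lies in several spherical rank $2$ residues whose remaining chambers are already in the domain, and one must show that all candidate values coincide. This is where Lemma \ref{Mu97Lemma4.3} becomes decisive. For two such residues $R_{+}, R_{-}$ sharing $d$, the 2-spherical plus rank $\geq 3$ hypothesis allows one to embed them into a larger spherical neighbourhood in which they become parallel to their images under the already-constructed piece of $\phi_+$; Lemma \ref{Mu97Lemma4.3} then forces the formula $\phi_+(x) = \proj_{R_\epsilon'} \phi_+(\proj_{R_{-\epsilon}} x)$, pinning down $\phi_+(d)$ independently of which residue is used to compute it. A standard gallery-homotopy argument reduces any two routes from $c_+$ to $d$ to a sequence of such single-residue comparisons, giving global consistency. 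This is the step I expect to be technically the hardest, because it requires a careful bookkeeping of which chambers are already mapped and a systematic use of Lemma \ref{Mu97Lemma4.2} and Lemma \ref{Mu97Lemma4.3} to propagate uniqueness.

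Finally, I would verify the remaining properties. That $\phi_+$ is a bijection preserving $s$-adjacency is immediate from the panel-wise construction, so Lemma \ref{adjazenzerhaltendeBijektionIsometrie} upgrades it to an honest isometry. The admissibility condition $\phi_+(c_-^{\mathrm{op}}) \subseteq (c_-')^{\mathrm{op}}$ follows from Lemma \ref{Lemadjacent}: opposition to $c_-$ is characterized combinatorially, one rank at a time, by $s$-adjacency data inside $\C_+$, and this characterization transports along $\phi_+$ to opposition with $c_-'$ because $\phi_+(c_+) = c_+'$ is opposite to $c_-'$ by hypothesis. Uniqueness of the extension, should it be desired, is then furnished by Lemma \ref{MRLemma4.10}.
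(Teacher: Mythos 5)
The paper does not give an internal argument here: Theorem \ref{Mu97Theorem1} is taken as a black box, citing \cite[Theorem 6.5]{BCM21}. You are attempting a direct proof, so this is a genuinely different route, and it is worth saying clearly that the route you sketch cannot work as written.

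The central flaw is in how you construct $\phi_+$ on $\C_+$. You propose to build $\phi_+$ by induction on $\ell_+(c_+,d)$, propagating across rank-two residues purely inside the positive half, and only afterwards check that $\phi_+(c_-^{\mathrm{op}}) \subseteq (c_-')^{\mathrm{op}}$. But an isometry $E_2(c_+) \to E_2(c_+')$ between $2$-spherical buildings of rank $\geq 3$ does \emph{not} in general extend to an isometry $\C_+ \to \C_+'$; this is precisely why the theorem is stated for twin buildings and why the opposite chamber $c_-$ appears in the hypothesis. The chamber $c_-$ (equivalently, the codistance) has to be used \emph{during} the inductive construction to control which of the several candidate values for $\phi_+(d)$ is correct. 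Separating "extend on $\C_+$ first, then verify the opposition condition" reverses the logical order and removes the only tool that makes well-definedness provable. Concretely, your proposed resolution of the well-definedness problem --- embedding two rank-two residues through $d$ "into a larger spherical neighbourhood in which they become parallel" --- is not available: once $d$ is far from $c_+$, those residues need not lie inside any common spherical residue, and the gallery-homotopy reduction you invoke is exactly the step that fails for non-spherical buildings of rank $\geq 3$ without the twin structure.

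There is a secondary gap at the end. Lemma \ref{Lemadjacent} characterizes when two chambers of $\C_-$ are $s$-adjacent in terms of their opposite sets in $\C_+$; it does not give a characterization of the set $c_-^{\mathrm{op}}$ itself in terms of intrinsic $\C_+$-data, so the asserted transport "$\phi_+(c_-^{\mathrm{op}}) \subseteq (c_-')^{\mathrm{op}}$ follows from Lemma \ref{Lemadjacent}" is not justified. In short, the gluing scaffold (Lemma \ref{Mu97Lemma4.6} plus Lemma \ref{ZusammengesetzteIsometrie}) is sound, but the two load-bearing steps --- building $\phi_+$ on $\C_+$ and verifying admissibility of $(c_-,c_-')$ --- each rest on claims that are false or unsupported, and both require the codistance to $c_-$ to be woven into the induction, as is done in \cite{BCM21}.
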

\begin{proof}
	This is a consequence of \cite[Theorem $6.5$]{BCM21}.
\end{proof}

\subsection*{Isometries and wall-connected twin buildings}

Let $\Delta, \Delta'$ be two twin buildings of type $(W, S)$. Let $(c_+, c_-) \in \C_+ \times \C_-, (c_+', c_-') \in \C_+' \times \C_-'$ be pairs of opposite chambers and let $\phi_+: \C_+ \to \C_+'$ be an isometry such that $\phi_+(c_+) = c_+'$. Furthermore let $(c_-, c_-')$ be $\phi_+$-admissible.

\begin{lemma}\label{Lemma:isometrymapsPcompatiblepathontoPcompatiblepath}
	Let $(P_0, \ldots, P_k)$ be an $\P_s(c_-)$-compatible path. Then $(\phi_+(P_0), \ldots, \phi_+(P_k))$ is an $\P_s(c_-')$-compatible path.
\end{lemma}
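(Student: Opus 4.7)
The plan is to recognize that $(\phi_+(P_0), \ldots, \phi_+(P_k))$ is automatically a compatible path in $\Delta_+'$ and then to invoke Theorem \ref{P-compatiblepath} with $\P_s(c_-')$ playing the role of $\P_s(c_-)$; this reduces matters to two checks, namely that $\phi_+(P_0)$ is opposite $\P_s(c_-')$ and that $\ell_*'(c_-', \proj_{\phi_+(P_k)} c_-') = \ell(\delta(\phi_+(P_0), \phi_+(P_k))) + 1$.

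First I would observe that, being an isometry of the building $\Delta_+$, the map $\phi_+$ carries panels to panels of the same type, maps each rank $2$ residue $R(P_{i-1}, P_i)$ to $R(\phi_+(P_{i-1}), \phi_+(P_i))$, and commutes with projections by Lemma \ref{Mu97Lemma4.2}. The compatibility condition $\proj_{R(P_{i-1}, P_i)} P_0 = P_{i-1}$ therefore transfers to the images, so $(\phi_+(P_0), \ldots, \phi_+(P_k))$ is a compatible path in $\Delta_+'$; in particular $\phi_+(P_0)$ and $\phi_+(P_k)$ are parallel by Lemma \ref{DMVMLemma19}.

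Next I would check opposition. The panels $P_0$ and $\P_s(c_-)$ are parallel opposite $s$-panels by Lemma \ref{AB08Exercise5.168+projbijection+Mu97Lemma3.5}(d), so Lemma \ref{Mu97Lemma3.4} yields $\delta_*(c_-, \proj_{P_0} c_-) = s$. For any chamber $x \in P_0 \setminus \{\proj_{P_0} c_-\}$, axiom (Tw$2$) then gives $\delta_*(c_-, x) = 1_W$, and such an $x$ exists because $\Delta_+$ is thick. Since $(c_-, c_-')$ is $\phi_+$-admissible, $\delta_*'(c_-', \phi_+(x)) = 1_W$, so $\phi_+(P_0)$ is an $s$-panel containing a chamber opposite $c_-' \in \P_s(c_-')$, hence $\phi_+(P_0)$ and $\P_s(c_-')$ are opposite.

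For the length identity, Proposition \ref{projectiontransitive}(c) gives $\ell_*(c_-, \proj_{P_k} c_-) = \ell(\delta(P_0, P_k)) + 1$. Writing $z := \proj_{P_k} c_-$ and using $\phi_+$-admissibility, for any $y_0 \in P_k$ we have $\ell_*'(c_-', \phi_+(y_0)) = \ell_*(c_-, y_0)$ and $\ell_+'(\phi_+(z), \phi_+(y_0)) = \ell_+(z, y_0)$, so $\phi_+(z)$ satisfies the length identity characterizing $\proj_{\phi_+(P_k)} c_-'$; by uniqueness $\proj_{\phi_+(P_k)} c_-' = \phi_+(z)$. Since $\phi_+$ preserves $\delta_+$ and projections, we also have $\delta(\phi_+(P_0), \phi_+(P_k)) = \delta(P_0, P_k)$, whence $\ell_*'(c_-', \proj_{\phi_+(P_k)} c_-') = \ell(\delta(\phi_+(P_0), \phi_+(P_k))) + 1$. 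Theorem \ref{P-compatiblepath} then applies and yields the $\P_s(c_-')$-compatibility. The only delicate point is the opposition check: admissibility is given at the single chamber $c_-$ and not throughout $\P_s(c_-)$, so we must find a chamber of $P_0$ that is opposite to $c_-$ itself; Lemma \ref{Mu97Lemma3.4} together with (Tw$2$) makes this possible, and the remaining steps are routine.
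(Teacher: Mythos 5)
Your proof is correct and follows the same route as the paper: reduce to a compatible path via Lemma \ref{Mu97Lemma4.2}, verify that $\phi_+(P_0)$ and $\P_s(c_-')$ are opposite, verify the length identity $\ell_*'(c_-', \proj_{\phi_+(P_k)} c_-') = \ell(\delta(\phi_+(P_0), \phi_+(P_k))) + 1$, and apply Theorem \ref{P-compatiblepath}. You spell out the opposition check (using Lemma \ref{Mu97Lemma3.4} and (Tw$2$) to exhibit a chamber of $P_0$ opposite $c_-$) which the paper leaves implicit, and you re-derive $\proj_{\phi_+(P_k)} c_-' = \phi_+(\proj_{P_k} c_-)$ from first principles where the paper simply cites Lemma \ref{Mu97Lemma4.2}; both are fine.
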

\begin{proof}
	Clearly, $(\phi_+(P_0), \ldots, \phi_+(P_k))$ is a compatible path by Lemma \ref{Mu97Lemma4.2}. Now $\phi_+(P_0)$ and $\P_s(\phi_+(c_-)) = \P_s(c_-')$ are opposite and by Proposition \ref{projectiontransitive}$(c)$ and Lemma \ref{Mu97Lemma4.2} we have $\ell(c_-', \proj_{\phi_+(P_k)} c_-') = \ell(\phi_+(c_-), \phi_+(\proj_{P_k} c_-)) = \ell(c_-, \proj_{P_k} c_-) = \ell(P_0, P_k) +1 = \ell(\phi_+(P_0), \phi_+(P_k)) +1$. Now the claim follows from Theorem \ref{P-compatiblepath}.
\end{proof}

\begin{lemma}\label{Lemma:Wall-connected survives under isometries}
	Let $\Delta, \Delta'$ be $2$-spherical. Then $\Gamma_s(c_-)$ is connected if and only if $\Gamma_s(c_-')$ is connected.
\end{lemma}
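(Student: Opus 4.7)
The plan is to show that $\phi_+$ induces a graph isomorphism $\Gamma_s(c_-) \to \Gamma_s(c_-')$, which immediately gives the equivalence of connectedness. Two things must be verified: that $\phi_+$ restricts to a bijection on the vertex sets of the two graphs, and that $\phi_+$ preserves wall-adjacency of panels.

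For the vertex bijection, I would first establish the auxiliary fact that an $s$-panel $Q \subseteq \C_+$ is opposite to $\P_s(c_-)$ if and only if $Q \cap c_-^{\mathrm{op}} \neq \emptyset$. The ``if'' direction is immediate since the type coincides and an opposite pair of chambers exists. For the ``only if'' direction, start from any pair $(x,y) \in Q \times \P_s(c_-)$ with $\delta_*(x,y) = 1_W$. If $y = c_-$ we take $z = x$; otherwise $\delta_-(y,c_-) = s$, and Lemma \ref{AB08Lemma5.139} gives $\delta_*(x,c_-) \in \{1_W,s\}$. In the case $\delta_*(x,c_-) = s$, axiom (Tw$3$) yields a chamber $z \in Q$ with $\delta_*(z,c_-) = 1_W$. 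Combining this characterization with the $\phi_+$-admissibility of $(c_-, c_-')$, we get $z \in c_-^{\mathrm{op}} \iff \phi_+(z) \in (c_-')^{\mathrm{op}}$, and since $\phi_+$ maps $s$-panels bijectively to $s$-panels, it bijects the vertex sets.

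For the edges, suppose $Q_1, Q_2 \subseteq \C_+$ are wall-adjacent of type $(c_-, s)$, witnessed by a panel $T \subseteq \C_+$ and $\P_s(c_-)$-compatible paths $(Q_1 = P_0, \ldots, P_k = T)$ and $(Q_2 = P_0', \ldots, P_{k'}' = T)$ of the same length and type. Apply Lemma \ref{Lemma:isometrymapsPcompatiblepathontoPcompatiblepath} to each of these paths: the images $(\phi_+(P_0), \ldots, \phi_+(P_k))$ and $(\phi_+(P_0'), \ldots, \phi_+(P_{k'}'))$ are $\P_s(c_-')$-compatible paths, ending at the common panel $\phi_+(T)$. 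They have the same length (which is preserved by applying $\phi_+$ term-wise) and the same type, because $\phi_+$ is an isometry and therefore sends each rank-$2$ residue $R(P_{i-1}, P_i)$ to a rank-$2$ residue of the same type. Hence $\phi_+(Q_1)$ and $\phi_+(Q_2)$ are wall-adjacent of type $(c_-', s)$, so $\phi_+$ maps edges of $\Gamma_s(c_-)$ to edges of $\Gamma_s(c_-')$.

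Applying the symmetric argument to $\phi_+^{-1}$ (which is an isometry for which $(c_-', c_-)$ is admissible) shows that $\phi_+^{-1}$ is also a graph morphism from $\Gamma_s(c_-')$ to $\Gamma_s(c_-)$, hence $\phi_+$ is in fact a graph isomorphism, and the conclusion follows. I do not anticipate a real obstacle: the only point requiring a small argument is the opposition statement used in the vertex step; the edge step is a direct invocation of Lemma \ref{Lemma:isometrymapsPcompatiblepathontoPcompatiblepath}, and the $2$-sphericity hypothesis is used only implicitly, through that lemma and the definition of compatible paths.
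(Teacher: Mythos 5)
Your proof is correct and follows essentially the same route as the paper: the key step in both is to apply Lemma \ref{Lemma:isometrymapsPcompatiblepathontoPcompatiblepath} to show that $\phi_+$ sends $\P_s(c_-)$-compatible paths to $\P_s(c_-')$-compatible paths, hence preserves wall-adjacency, and then to invoke symmetry via $\phi_+^{-1}$. The only difference is that you spell out explicitly the bijection between the vertex sets (via the characterization of opposition using $c_-^{\mathrm{op}}$ and $\phi_+$-admissibility), whereas the paper leaves that step implicit; your added detail is correct and harmless.
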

\begin{proof}
	It suffices to show that if $Q_1, Q_2$ are wall-adjacent of type $(c_-, s)$, then $\phi_+(Q_1), \phi_+(Q_2)$ are wall-adjacent of type $(c_-', s)$, too. Let $Q_1, Q_2$ be wall-adjacent of type $(c_-, s)$. By definition $Q_1, Q_2 \in \P_s(c_-)^{\mathrm{op}}$ and there exist a panel $T \subseteq \C_+$ and $\P_s(c_-)$-compatible paths $(P_0 = Q_1, \ldots, P_k = T)$ and $(P_0' = Q_2, \ldots, P_k' = T)$ of the same length and type. Lemma \ref{Lemma:isometrymapsPcompatiblepathontoPcompatiblepath} implies that $\phi_+$ maps a $\P_s(c_-)$-compatible to a $\P_s(c_-')$-compatible path. Thus $\phi_+(Q_1), \phi_+(Q_2)$ are wall-adjacent of type $(c_-', s)$ and the claim follows.
\end{proof}

\begin{definition}
	For $x \in c_-^{\mathrm{op}}$ and $s\in S$ we define the mapping
	\begin{align*}
	\phi_s^x: \P_s(c_-) \to \P_s(c_-'), d \mapsto \left( \proj_{\P_s(c_-')}^{\P_s(\phi_+(x))} \circ \phi_+ \circ \proj_{\P_s(x)}^{\P_s(c_-)} \right) (d).
	\end{align*}
	Since $\P_s(c_-), \P_s(x)$ are parallel by Lemma \ref{AB08Exercise5.168+projbijection+Mu97Lemma3.5}$(d)$ for each $x\in c_-^{\mathrm{op}}$ and $\phi_+$ is an isometry it follows again by Lemma \ref{AB08Exercise5.168+projbijection+Mu97Lemma3.5}$(d)$ that $\phi_s^x$ is a bijection and hence an isometry. In particular, $\phi_s^x(c_-) = c_-'$.
\end{definition}

\begin{proposition}\label{Propphiswelldefined}
	Let $s\in S, x, z \in c_-^{\mathrm{op}}$ and let $P = \P_s(x), Q = \P_s(z)$ be wall-adjacent of type $(c_-, s)$. Then we have $\phi_s^x = \phi_s^z$.
\end{proposition}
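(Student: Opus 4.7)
The plan is to show that, for every $d \in \P_s(c_-)$, both $\phi_s^x(d)$ and $\phi_s^z(d)$ collapse to the same expression
$\proj_{\P_s(c_-')}^{\phi_+(T)}\bigl(\phi_+(\proj_T^{\P_s(c_-)}(d))\bigr)$; the common terminal panel $T$ provided by the wall-adjacency is the ``bridge'' through which both factorizations funnel, and this is why $x$ and $z$ disappear from the formula.

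Fix $d\in \P_s(c_-)$ and set $d^* := \proj_T^{\P_s(c_-)}(d)$. First I would unfold $\phi_s^x(d)$. Applying Proposition \ref{projectiontransitive}(a) to the $\P_s(c_-)$-compatible path $(\P_s(x), P_1, \ldots, P_k = T)$ gives $\proj_{\P_s(x)}^{\P_s(c_-)}(d) = \proj_{\P_s(x)}^T(d^*)$. Since $\phi_+$ is a global isometry of $\C_+$ and the projection $\proj_{\P_s(x)}^T$ stays inside $\C_+$, Lemma \ref{Mu97Lemma4.2} allows me to commute $\phi_+$ past it, yielding $\phi_+(\proj_{\P_s(x)}^T(d^*)) = \proj_{\P_s(\phi_+(x))}^{\phi_+(T)}(\phi_+(d^*))$.

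Next, Lemma \ref{Lemma:isometrymapsPcompatiblepathontoPcompatiblepath} tells me that the image path $(\P_s(\phi_+(x)), \phi_+(P_1), \ldots, \phi_+(T))$ is $\P_s(c_-')$-compatible. Applying Proposition \ref{projectiontransitive}(b) to this image path factors $\proj_{\P_s(c_-')}^{\P_s(\phi_+(x))}$ through $\phi_+(T)$; together with Lemma \ref{AB08Exercise5.168+projbijection+Mu97Lemma3.5}(b), which says that the two projections between the parallel panels $\P_s(\phi_+(x))$ and $\phi_+(T)$ are mutually inverse, the intermediate composition collapses to the identity on $\phi_+(T)$. This leaves $\phi_s^x(d) = \proj_{\P_s(c_-')}^{\phi_+(T)}(\phi_+(d^*))$.

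Running the very same argument on the second $\P_s(c_-)$-compatible path $(\P_s(z), P_1', \ldots, P_{k'}' = T)$, which by the definition of wall-adjacency ends at the same panel $T$, delivers $\phi_s^z(d) = \proj_{\P_s(c_-')}^{\phi_+(T)}(\phi_+(d^*))$ as well, and hence $\phi_s^x(d) = \phi_s^z(d)$ for every $d$. The only step requiring care is the commutation of $\phi_+$ with the intra-half projection in the second paragraph, but this is immediate from Lemma \ref{Mu97Lemma4.2}; the ``same length and type'' clause in the wall-adjacency definition turns out not to be needed here, only the shared endpoint $T$.
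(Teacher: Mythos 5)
Your proof is correct and follows essentially the same route as the paper: factor both compositions through the common endpoint $T$ using Proposition \ref{projectiontransitive}(a) and (b), commute $\phi_+$ past the intra-half projection, and cancel the mutually inverse projections between $\P_s(\phi_+(x))$ and $\phi_+(T)$; the paper simply packages the last two steps into a single invocation of Lemma \ref{Mu97Lemma4.3}, which is itself derived from Lemma \ref{Mu97Lemma4.2} and this cancellation. One small citation slip: for the cancellation between the parallel panels $\P_s(\phi_+(x))$ and $\phi_+(T)$, which both lie in $\C_+'$, you should invoke Lemma \ref{parallelresidues} (parallel residues inside one building) rather than Lemma \ref{AB08Exercise5.168+projbijection+Mu97Lemma3.5}(b) (which concerns residues in opposite halves of a twin building).
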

\begin{proof}
	By definition there exist a panel $T \subseteq \C_+$ and $\P_s(c_-)$-compatible path's $(P_0 = P, \ldots, P_k = T)$ and $(Q_0 = Q, \ldots, Q_k = T)$ of the same length and type. By the previous lemma $(\phi_+(P_0), \ldots, \phi_+(P_k))$ and $(\phi_+(Q_0), \ldots, \phi_+(Q_k))$ are $\P_s(c_-')$-compatible paths. Let $Z \in \{P, Q\}$. By Proposition \ref{projectiontransitive}$(a), (b)$ we obtain
	\begin{align*}
		\proj_{Z}^{\P_s(c_-)} &= \proj_{Z}^{T} \circ \proj_{T}^{\P_s(c_-)} \\
		\proj_{\P_s(c_-')}^{\phi_+(Z)} &= \proj_{\P_s(c_-')}^{\phi_+(T)} \circ \proj_{\phi_+(T)}^{\phi_+(Z)} 
	\end{align*}
	By Lemma \ref{Mu97Lemma4.3} we obtain $\proj_{\phi_+(T)}^{\phi_+(Z)} \circ \phi_+ \circ \proj_{Z}^T = \phi_+ \vert_T$, since the panels $Z$ and $T$ are parallel by Lemma \ref{DMVMLemma19}. We have
	\begin{align*}
		\proj_{\P_s(c_-')}^{\phi_+(Z)} \circ \phi_+ \circ \proj_{Z}^{\P_s(c_-)} &= 
		\proj_{\P_s(c_-')}^{\phi_+(T)} \circ \proj_{\phi_+(T)}^{\phi_+(Z)} \circ \phi_+ \circ \proj_{Z}^{T} \circ \proj_{T}^{\P_s(c_-)} \\
		&= \proj_{\P_s(c_-')}^{\phi_+(T)} \circ  \phi_+ \circ \proj_{T}^{\P_s(c_-)}
	\end{align*}
	This finishes the claim.
\end{proof}

\begin{corollary}\label{phiswelldefined}
	Let $x, z \in c_-^{\mathrm{op}}$ and $s\in S$ such that $\Gamma_s(c_-)$ is connected. Then $\phi_s^x = \phi_s^z$.
\end{corollary}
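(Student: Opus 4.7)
The plan is to connect $\P_s(x)$ and $\P_s(z)$ by a path in $\Gamma_s(c_-)$ and to chain applications of Proposition \ref{Propphiswelldefined} along the edges of that path. The crucial preliminary observation is that $\phi_s^y$ depends only on the $s$-panel $\P_s(y)$, not on the choice of $y$ inside it: the defining expression involves $\proj^{\P_s(c_-)}_{\P_s(y)}$ and $\proj^{\P_s(\phi_+(y))}_{\P_s(c_-')}$, both of which depend only on $\P_s(y)$, since $\phi_+$ is an isometry preserving $s$-adjacency, so $\P_s(\phi_+(y)) = \phi_+(\P_s(y))$.

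For every $y \in c_-^{\mathrm{op}}$, the panels $\P_s(y)$ and $\P_s(c_-)$ are opposite: they share type $\{s\}$ and contain the opposite pair $(y, c_-)$. Hence both $\P_s(x)$ and $\P_s(z)$ are vertices of $\Gamma_s(c_-)$, and by the assumed connectedness there is a path $\P_s(x) = Q_0, Q_1, \ldots, Q_n = \P_s(z)$ in $\Gamma_s(c_-)$. The main technical point is to exhibit, for each intermediate $Q_i$, a chamber $y_i \in Q_i$ with $y_i \in c_-^{\mathrm{op}}$, so that $Q_i = \P_s(y_i)$. Since $Q_i$ is opposite to $\P_s(c_-)$, Lemma \ref{AB08Exercise5.168+projbijection+Mu97Lemma3.5}(d) gives parallelism, and part (b) of the same lemma then shows that $\proj^{Q_i}_{\P_s(c_-)}: Q_i \to \P_s(c_-)$ is a bijection. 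Pick $y_i \in Q_i$ with $z_i := \proj_{\P_s(c_-)} y_i \neq c_-$; such a $y_i$ exists because $|Q_i| \geq 2$ and the projection is bijective. By Lemma \ref{Mu97Lemma3.4} applied to the opposite residues $Q_i$ and $\P_s(c_-)$, we have $\delta_*(y_i, z_i) = s$, and the projection formula for twin buildings yields
\[ \ell_*(y_i, c_-) = \ell_*(y_i, z_i) - \ell_-(z_i, c_-) = 1 - 1 = 0, \]
so $\delta_*(y_i, c_-) = 1_W$, i.e.\ $y_i \in c_-^{\mathrm{op}}$ and $Q_i = \P_s(y_i)$. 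At the endpoints we simply set $y_0 := x$ and $y_n := z$, which is consistent because $\phi_s^{y}$ depends only on the panel $\P_s(y)$.

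Each consecutive pair $(Q_i, Q_{i+1}) = (\P_s(y_i), \P_s(y_{i+1}))$ is, by definition of the graph $\Gamma_s(c_-)$, wall-adjacent of type $(c_-, s)$, so Proposition \ref{Propphiswelldefined} yields $\phi_s^{y_i} = \phi_s^{y_{i+1}}$ for every $0 \leq i < n$. Composing these equalities gives $\phi_s^x = \phi_s^{y_0} = \phi_s^{y_n} = \phi_s^z$, as claimed. The only nontrivial step is the panel-chamber identification in the middle paragraph; once that is in place, the conclusion is a one-line chaining of equalities.
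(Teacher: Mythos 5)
Your proof is correct and follows the same route as the paper, which simply invokes induction on the length of a path in $\Gamma_s(c_-)$ together with Proposition \ref{Propphiswelldefined}. You spell out an implicit step that the paper leaves to the reader: the vertices of $\Gamma_s(c_-)$ are panels, while Proposition \ref{Propphiswelldefined} and the definition of $\phi_s^x$ are phrased in terms of chambers $x\in c_-^{\mathrm{op}}$, so one must realize each intermediate panel $Q_i$ as $\P_s(y_i)$ for some $y_i\in c_-^{\mathrm{op}}$. Your argument for this (parallelism from Lemma \ref{AB08Exercise5.168+projbijection+Mu97Lemma3.5}(d), bijectivity of the projection from part (b), Lemma \ref{Mu97Lemma3.4} for the codistance, then the twin projection formula) is correct, though it can be shortened: any $y_i\in Q_i$ with $y_i\neq\proj_{Q_i}c_-$ is already opposite to $c_-$, directly by the gate property of $\proj_{Q_i}c_-$ and the fact that $\delta_*(c_-,\proj_{Q_i}c_-)=s$. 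The observation that $\phi_s^y$ depends only on $\P_s(y)$ is true and clean, but not strictly needed once you set $y_0:=x$ and $y_n:=z$ from the start. In short: same proof as the paper's, with the panel-to-chamber lifting made explicit.
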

\begin{proof}
	This follows by induction on the length of a path in $\Gamma_s(c_-)$ and Proposition \ref{Propphiswelldefined}.
\end{proof}

\subsection*{Extending isometries of wall-connected twin buildings}

Let $\Delta, \Delta'$ be two twin buildings of type $(W, S)$. Let $(c_+, c_-) \in \C_+ \times \C_-, (c_+', c_-') \in \C_+' \times \C_-'$ be pairs of opposite chambers and let $\phi_+: \C_+ \to \C_+'$ be an isometry such that $\phi_+(c_+) = c_+'$. Furthermore let $(c_-, c_-')$ be $\phi_+$-admissible. Assume that $\Delta$ is wall-connected. By Corollary \ref{phiswelldefined} we have $\phi_s^x = \phi_s^z$ for any $x, z \in c_-^{\mathrm{op}}$ and $s\in S$. We denote this mapping by $\phi_s$.

\begin{lemma}\label{panelextension}
	For each $s\in S$ the pair $(\phi_+, \phi_s)$ is admissible.
\end{lemma}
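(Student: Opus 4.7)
The plan is to reduce the statement to a pointwise admissibility check via Lemma \ref{ZusammengesetzteIsometrie} and Lemma \ref{Mu97Lemma4.6}, and then exploit the formula defining $\phi_s$ together with axiom (Tw$2$). Applying Lemma \ref{ZusammengesetzteIsometrie} with $\mathcal{X} = \C_+$ and $\mathcal{Y} = \P_s(c_-)$ (which are disjoint), it suffices to show that the pair $(d, \phi_s(d))$ is $\phi_+$-admissible for every $d \in \P_s(c_-)$. Since $d \in \C_-$ and $\phi_s(d) \in \C_-'$, Lemma \ref{Mu97Lemma4.6} further reduces this to verifying the inclusion $\phi_+(d^{\mathrm{op}}) \subseteq \phi_s(d)^{\mathrm{op}}$ for each such $d$.

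Fix $d \in \P_s(c_-)$ and $y \in d^{\mathrm{op}}$; the goal is to prove $\delta_*(\phi_+(y), \phi_s(d)) = 1_W$. Because $\delta_-(c_-, d) \in \langle s \rangle$, Lemma \ref{Lemadjacent} produces a chamber $x \in c_-^{\mathrm{op}}$ with $\delta_+(y, x) = s$, so in particular $y \in \P_s(x)$. Since $x$ is opposite $c_-$, the panels $\P_s(x) \subseteq \C_+$ and $\P_s(c_-) \subseteq \C_-$ are opposite. Put $y'' := \proj_{\P_s(x)} d$; Lemma \ref{Mu97Lemma3.4} then gives $\delta_*(y'', d) = s$, whereas $\delta_*(y, d) = 1_W$, so $y \neq y''$, and as any two distinct chambers of an $s$-panel are $s$-adjacent we obtain $\delta_+(y'', y) = s$.

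By Corollary \ref{phiswelldefined} I may evaluate $\phi_s$ using this specific $x$, obtaining $\phi_s(d) = \proj_{\P_s(c_-')} \phi_+(y'')$. The $\phi_+$-admissibility of $(c_-, c_-')$ yields $\phi_+(x) \in (c_-')^{\mathrm{op}}$, so the panels $\P_s(\phi_+(x))$ and $\P_s(c_-')$ are opposite; Lemma \ref{Mu97Lemma3.4} therefore gives $\delta_*(\phi_+(y''), \phi_s(d)) = s$. Combining this with $\delta_+(\phi_+(y''), \phi_+(y)) = s$ (which follows because $\phi_+$ is an isometry) and applying axiom (Tw$2$) with $w = s$ and $\ell(ws) = 0 = \ell(w) - 1$, I conclude $\delta_*(\phi_s(d), \phi_+(y)) = ws = 1_W$, as required. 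The main subtlety lies in arranging, via Lemma \ref{Lemadjacent}, that a given opposite chamber $y$ of $d$ actually sits inside a panel $\P_s(x)$ opposite to $\P_s(c_-)$, so that (Tw$2$) can transport the opposition relation through $\phi_+$.
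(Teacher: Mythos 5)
Your proposal is correct and follows essentially the same route as the paper: reduce via Lemma \ref{ZusammengesetzteIsometrie} and Lemma \ref{Mu97Lemma4.6} to the inclusion $\phi_+(d^{\mathrm{op}}) \subseteq \phi_s(d)^{\mathrm{op}}$, then for $y \in d^{\mathrm{op}}$ use Lemma \ref{Lemadjacent} to produce $x \in c_-^{\mathrm{op}}$ with $\delta_+(x,y)=s$, compare $y$ with $\proj_{\P_s(x)} d$, push through $\phi_+$, apply Lemma \ref{Mu97Lemma3.4} to the opposite panels $\P_s(\phi_+(x))$ and $\P_s(c_-')$, and finish with (Tw$2$). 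Your write-up is a bit more explicit than the paper about why $y \neq \proj_{\P_s(x)}d$ and about invoking Corollary \ref{phiswelldefined} to evaluate $\phi_s$ via the chosen $x$, and there is only a cosmetic notational slip (you write $\delta_*$, $\delta_+$ for codistances and distances in $\Delta'$ where the paper would write $\delta_*'$, $\delta_+'$).
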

\begin{proof}
	Let $d_-\in \P_s(c_-)$. We will show that $\phi_+(d_-^{\mathrm{op}}) \subseteq \phi_s(d_-)^{\mathrm{op}}$. Let $y\in d_-^{\mathrm{op}}$. By Lemma \ref{Lemadjacent} there exists $x \in c_-^{\mathrm{op}}$ such that $\delta_+(y, x) = s$. Since $y \in d_-^{\mathrm{op}}$ we have $\proj_{\P_s(x)} d_- \neq y$. This implies $s = \delta_+(\proj_{\P_s(x)} d_-, y) = \delta_+'(\phi_+(\proj_{\P_s(x)} d_-), \phi_+(y))$. Since $\P_s(c_-')$ and $\P_s(\phi_+(x))$ are opposite, Lemma \ref{Mu97Lemma3.4} and the definition of $\phi_s$ yield $\delta_*'( \phi_s(d_-), \phi_+ (\proj_{\P_s(x)} d_-)) = s$ by Lemma \ref{Mu97Lemma3.4}. By (Tw$2$) we have $\delta_*'( \phi_s(d_-), \phi_+(y)) = 1_W$. By Lemma \ref{Mu97Lemma4.6} the pair $(d_-, \phi_s(d_-))$ is $\phi_+$-admissible for all $d_-\in \P_s(c_-)$. The claim follows now by Lemma \ref{ZusammengesetzteIsometrie}.
\end{proof}

\begin{lemma}
	The isometry $\phi_+$ extends uniquely to an isometry $\phi: \C_+ \cup E_1(c_-) \to \C_+' \cup E_1(c_-')$. In particular, for every chamber $x\in \C_-$ there exists a unique chamber $x' \in \C_-'$ such that $(x, x')$ is $\phi_+$-admissible.
\end{lemma}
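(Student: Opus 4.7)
The plan is to glue the isometry $\phi_+$ on $\C_+$ with the panel isometries $\phi_s$ on $\P_s(c_-)$ (for $s \in S$) into a single map $\phi \colon \C_+ \cup E_1(c_-) \to \C_+' \cup E_1(c_-')$, and then to verify the isometry condition case by case. Since $E_1(c_-) = \bigcup_{s \in S} \P_s(c_-)$, any two panels through $c_-$ meet exactly in $\{c_-\}$, each $\phi_s$ satisfies $\phi_s(c_-) = c_-'$ by construction, and $\C_+ \cap E_1(c_-) = \emptyset$, the piecewise prescription is unambiguous; bijectivity onto $\C_+' \cup E_1(c_-')$ is inherited from $\phi_+$ and from the $\phi_s$ (using the analogous panel decomposition of $E_1(c_-')$).

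To verify that $\phi$ preserves distances, I would check the relevant pair types. Distances within $\C_+$, within a fixed $\P_s(c_-)$, or involving $c_-$ are preserved because $\phi_+$ and each $\phi_s$ are isometries. For $y \in \C_+$ and $d \in \P_s(c_-)$, Lemma \ref{panelextension} says that $(\phi_+, \phi_s)$ is admissible, hence $\phi$ restricted to $\C_+ \cup \P_s(c_-)$ coincides with a common isometric extension of $\phi_+$ and $\phi_s$, so that $\delta_*(y, d) = \delta_*'(\phi(y), \phi(d))$. For chambers $d \in \P_s(c_-) \setminus \{c_-\}$ and $d' \in \P_t(c_-) \setminus \{c_-\}$ with $s \neq t$, applying axiom (Bu2) through $c_-$ together with $\ell(st) = 2$ forces $\delta_-(d, d') = st$, and the analogous reasoning on the image side yields $\delta_-'(\phi(d), \phi(d')) = st$.

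For uniqueness, if $\psi$ is any other isometric extension of $\phi_+$ to $\C_+ \cup E_1(c_-)$, then for each $d \in E_1(c_-)$ the restrictions of $\phi$ and $\psi$ to $\C_+ \cup \{d\}$ are two isometries extending $\phi_+$ that in particular agree on $d^{\mathrm{op}} \subseteq \C_+$, so Lemma \ref{MRLemma4.10} forces $\psi(d) = \phi(d)$. For the in-particular assertion, uniqueness of $x'$ with $(x, x')$ being $\phi_+$-admissible is a direct consequence of Lemma \ref{MRLemma4.10} for arbitrary $x \in \C_-$; existence is given by $x' = \phi(x)$ when $x \in E_1(c_-)$, and for $x$ at larger gallery distance from $c_-$ one iterates, applying the same extension statement with $c_-$ replaced by a neighbor along a gallery in $\Delta_-$ and using the already-extended isometry as the new base.

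I expect the only genuinely delicate point to be the coherent invocation of Lemma \ref{panelextension} across all $s \in S$ simultaneously, which is exactly where wall-connectedness (through Corollary \ref{phiswelldefined} and the resulting well-definedness of $\phi_s$) is used; the distance verifications in cases involving two distinct panels and the uniqueness argument both reduce to routine applications of (Bu2) and Lemma \ref{MRLemma4.10} once the gluing is set up.
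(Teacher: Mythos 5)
Your proposal is correct and follows essentially the same route as the paper: assemble $\phi_-$ on $E_1(c_-)$ from the well-defined panel maps $\phi_s$, check distance preservation pairwise (with the (Bu2) computation for chambers in distinct panels and Lemma~\ref{panelextension} for mixed pairs), then get uniqueness from Lemma~\ref{MRLemma4.10} and existence for general $x\in\C_-$ by induction on $\ell_-(c_-,x)$, re-basing the extension at a neighbour of $c_-$. The only cosmetic difference is that the paper packages the gluing step through Lemma~\ref{ZusammengesetzteIsometrie}, whereas you verify the mixed case directly.
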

\begin{proof}
	Let $s, t \in S$. Then $\phi_s(c_-) = c_-' = \phi_t(c_-)$. Therefore the mapping $\phi_-: E_1(c_-) \to E_1(c_-'), x \mapsto \phi_s(x)$, if $x\in \P_s(c_-)$, is well-defined. Moreover, $\phi_-$ is bijective and for all $x, y \in E_1(c_-)$ with $\delta_-(x, y) \in S$ we have $\delta_-(x, y) = \delta_-'(\phi_-(x), \phi_-(y))$. Let $x, y \in E_1(c_-)$ with $\delta_-(x, y) \notin S$. Then there exists $s \neq t \in S$ with $x \in \P_s(c_-)$ and $y\in \P_t(c_-)$ and we have $\delta_-(x, y) = \delta_-(x, c_-) \delta_-(c_-, y) = st$. We deduce $\delta_-'(\phi_-(x), \phi_-(c_-)) = s$ and $\delta_-'(\phi_-(c_-), \phi_-(y)) = t$ and hence $\delta_-'(\phi_-(x), \phi_-(y)) = st$. Thus $\phi_-$ is an isometry and we obtain that $(x, \phi_-(x))$ is $\phi_+$-admissible for all $x\in E_1(c_-)$ by Lemma \ref{panelextension}. By Lemma \ref{ZusammengesetzteIsometrie} the pair $(\phi_+, \psi_-)$ is admissible and hence the mapping $\phi_-$ extends $\phi_+$ to an isometry from $\C_+ \cup E_1(c_-)$ to $\C_+' \cup E_1(c_-')$.
	
	The uniqueness of $x'$ follows from Lemma \ref{MRLemma4.10}. The existence follows by the first assertion of the lemma and induction on $\ell_-(c_-, x)$.
\end{proof}

\begin{theorem}\label{ExtendingIsometry}
	Let $x\in \C_-$ and let $x' \in \C_-'$ be the unique chamber such that $(x, x')$ is $\phi_+$-admissible. Then $\phi_-: \C_- \to \C_-', x \mapsto x'$ is an isometry.
\end{theorem}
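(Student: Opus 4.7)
The plan is to verify that the mapping $\phi_-$ provided by the preceding lemma is a bijection onto $\C_-'$ and preserves the Weyl distance $\delta_-$. The strategy is to first establish local bijectivity at every chamber of $\C_-$ by reapplying the preceding lemma with a new base opposite pair, then derive global distance preservation by induction on length, and finally obtain surjectivity from local surjectivity together with the connectedness of the chamber graph of $\C_-'$.

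\textbf{Local bijectivity.} Fix $x \in \C_-$ and any $z \in x^{\mathrm{op}} \subseteq \C_+$. Since $(x, \phi_-(x))$ is $\phi_+$-admissible, we have $\delta_*'(\phi_+(z), \phi_-(x)) = \delta_*(z, x) = 1_W$, so $\phi_+(z)$ is opposite to $\phi_-(x)$ in $\Delta'$. Applying the preceding lemma with the opposite pairs $(z, x)$ and $(\phi_+(z), \phi_-(x))$ in place of $(c_+, c_-)$ and $(c_+', c_-')$ — the isometry $\phi_+$ and the wall-connectedness of $\Delta$ are unchanged — I obtain an isometry $\tilde\phi_x : \C_+ \cup E_1(x) \to \C_+' \cup E_1(\phi_-(x))$ extending $\phi_+$. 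For each $y \in E_1(x) \cap \C_-$ the pair $(y, \tilde\phi_x(y))$ is $\phi_+$-admissible, so by uniqueness of the $\phi_+$-admissible image (the second assertion of the preceding lemma) we have $\tilde\phi_x(y) = \phi_-(y)$. Thus $\phi_-$ restricts to a bijective isometry $E_1(x) \cap \C_- \to E_1(\phi_-(x)) \cap \C_-'$.

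\textbf{Distance preservation.} I prove $\delta_-'(\phi_-(x), \phi_-(y)) = \delta_-(x, y)$ by induction on $\ell(\delta_-(x, y))$. The case $x = y$ is trivial. Otherwise, put $w := \delta_-(x, y)$ and choose $s \in S$ with $\ell(ws) = \ell(w) - 1$. Let $y' := \proj_{\P_s(y)} x$, so $y' \neq y$ and $\delta_-(x, y') = ws$. By the inductive hypothesis, $\delta_-'(\phi_-(x), \phi_-(y')) = ws$. Local bijectivity yields $\phi_-(y) \neq \phi_-(y')$ and $\delta_-'(\phi_-(y'), \phi_-(y)) = s$. Since $\ell((ws) s) = \ell(w) = \ell(ws) + 1$, axiom (Bu2) applied in $\Delta'$ forces $\delta_-'(\phi_-(x), \phi_-(y)) = (ws) s = w$.

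\textbf{Bijectivity and conclusion.} Injectivity is immediate from distance preservation: $\phi_-(x) = \phi_-(y)$ implies $\delta_-(x, y) = 1_W$, hence $x = y$. For surjectivity, given $z' \in \C_-'$, choose a gallery $c_-' = z_0', z_1', \ldots, z_k' = z'$ and induct on $i$. The base case $z_0' = c_-' = \phi_-(c_-)$ is given; if $z_i' = \phi_-(z_i)$, then $z_{i+1}' \in E_1(\phi_-(z_i)) \cap \C_-' = \phi_-(E_1(z_i) \cap \C_-)$ by local bijectivity, producing $z_{i+1}$ with $\phi_-(z_{i+1}) = z_{i+1}'$. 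Hence $\phi_-$ is a distance-preserving bijection, i.e.\ an isometry. The main technical step is the local bijectivity: one must match the freshly built extension $\tilde\phi_x$ with the globally defined $\phi_-$, and this rests entirely on the uniqueness of $\phi_+$-admissible images (Lemma \ref{MRLemma4.10}); the remaining induction and gallery arguments are routine.
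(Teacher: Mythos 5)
Your proposal is correct, but it takes a genuinely different route from the paper. Where the paper works entirely from the global pair $(c_-, c_-')$ and closes the argument with two tools — Lemma \ref{Lemadjacent} (the characterization of $s$-adjacency via the existence of suitably placed opposites) and Lemma \ref{adjazenzerhaltendeBijektionIsometrie} (an adjacency-preserving bijection is automatically an isometry) — you instead re-anchor the machinery of the preceding lemma at every chamber $x\in\C_-$, choosing a fresh opposite $z\in x^{\mathrm{op}}$ and using wall-connectedness of $\Delta$ a second time to obtain a local isometry $E_1(x) \to E_1(\phi_-(x))$. From there you reconstruct global distance preservation by hand via an induction on $\ell(\delta_-(x,y))$ (essentially reproving the content of Lemma \ref{adjazenzerhaltendeBijektionIsometrie} inline), and you obtain surjectivity by running a gallery from $c_-'$ and pulling it back step by step. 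The paper's proof is shorter and more modular because it exploits Lemma \ref{Lemadjacent} to transfer adjacency through opposite chambers and avoids any local re-anchoring beyond what the preceding lemma already did for existence of the $\phi_+$-admissible image; your version avoids Lemma \ref{Lemadjacent} entirely at the cost of re-verifying the standing hypotheses at every new base pair $(z,x)$ (which does go through, since wall-connectedness of $\Delta$ is a global hypothesis and $(x, \phi_-(x))$ is $\phi_+$-admissible by construction). Both arguments also obtain injectivity and surjectivity differently: the paper deduces them cleanly from $\phi_+^{-1}$-admissibility and the uniqueness clause, whereas you deduce injectivity from distance preservation and surjectivity from local surjectivity plus gallery-connectedness of $\C_-'$. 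Your key steps — that $\tilde\phi_x(y) = \phi_-(y)$ via uniqueness of $\phi_+$-admissible images, and the application of (Bu2) with $\ell((ws)s)=\ell(ws)+1$ — are carried out correctly.
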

\begin{proof}
	Let $(y, x')$ be $\phi_+$-admissible. Then $(x', y)$ and $(x', x)$ are $\phi_+^{-1}$-admissible. The uniqueness part of the previous lemma implies $x=y$ and hence $\phi_-$ is injective. Let $y' \in \C_-'$ and let $y \in \C_-$ be the unique chamber such that $(y', y)$ is $\phi_+^{-1}$-admissible. Then $(y, y')$ is $\phi_+$-admissible and hence $\phi_-$ is surjective. Thus $\phi_-$ is a bijection.
	Now we will show that $\phi_-$ preserves $s$-adjacency. Let $x, y \in \C_-, s\in S$ such that $\delta_-(x, y) = s$. Note that $\phi_+$ is an isometry and that $(x, \phi_-(x)), (y, \phi_-(y))$ are $\phi_+$-admissible. Let $z \in \phi_+(x)^{\mathrm{op}}$. Then $\phi_+^{-1}(z) \in x^{\mathrm{op}}$ and Lemma \ref{Lemadjacent} yields $z' \in y^{\mathrm{op}}$ such that $\delta_+(z, \phi_+(z')) = \delta_+(\phi_+^{-1}(z), z') = s$. Again, Lemma \ref{Lemadjacent} implies $\delta_-'(\phi_-(x), \phi_-(y)) \in \langle s \rangle$. Since $\phi_-$ is injective, we have $\delta_-'(\phi_-(x), \phi_-(y)) = s$. Now Lemma \ref{adjazenzerhaltendeBijektionIsometrie} finishes the claim.
\end{proof}

\begin{corollary}\label{Cor: Main result}
	Let $\Delta$ be $2$-spherical, thick, wall-connected and of rank at least three. Then any isometry $\phi: E_2(c_+) \cup \{c_-\} \to E_2(c_+') \cup \{c_-'\}$ extends uniquely to an isometry from $\C_+ \cup \C_-$ onto $\C_+' \cup \C_-'$.
\end{corollary}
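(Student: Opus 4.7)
The plan is to obtain the extension in two stages: first build the $\C_+$-side using the already established rank-two extension theorem, then use wall-connectedness and \textbf{Theorem \ref{ExtendingIsometry}} to fill in the $\C_-$-side, and finally glue them via Lemma \ref{ZusammengesetzteIsometrie}.

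First I would invoke \textbf{Theorem \ref{Mu97Theorem1}}: since $\Delta$ and $\Delta'$ are $2$-spherical of rank at least three and the given map $\phi\colon E_2(c_+) \cup \{c_-\} \to E_2(c_+') \cup \{c_-'\}$ is an isometry between pairs of opposite chambers, there is an extension $\tilde{\phi}\colon \C_+ \cup \{c_-\} \to \C_+' \cup \{c_-'\}$. Setting $\phi_+ := \tilde{\phi}|_{\C_+}$, this yields an isometry $\phi_+\colon \C_+ \to \C_+'$ with $\phi_+(c_+) = c_+'$, and by construction the pair $(c_-, c_-')$ is $\phi_+$-admissible.

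Next, because $\Delta$ is wall-connected, the hypotheses of \textbf{Theorem \ref{ExtendingIsometry}} are met: there exists an isometry $\phi_-\colon \C_- \to \C_-'$ such that $(x, \phi_-(x))$ is $\phi_+$-admissible for every $x \in \C_-$. In particular $\phi_-(c_-) = c_-'$. Applying \textbf{Lemma \ref{ZusammengesetzteIsometrie}} to $\phi_+$ and $\phi_-$ (which are defined on the disjoint sets $\C_+$ and $\C_-$), the pair $(\phi_+, \phi_-)$ is admissible, i.e.\ there is a common isometry $\Phi\colon \C_+ \cup \C_- \to \C_+' \cup \C_-'$ whose restrictions are $\phi_+$ and $\phi_-$. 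Since $\Phi|_{E_2(c_+) \cup \{c_-\}} = \phi_+|_{E_2(c_+)} \cup \{c_- \mapsto c_-'\}$ agrees with $\tilde{\phi}|_{E_2(c_+) \cup \{c_-\}} = \phi$, we obtain an extension of $\phi$ to an isometry $\C_+ \cup \C_- \to \C_+' \cup \C_-'$.

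For uniqueness, suppose $\Phi, \Psi$ are two such extensions. Then $\Psi^{-1} \circ \Phi$ is an isometry of $\Delta$ that fixes $E_2(c_+) \cup \{c_-\}$ pointwise, hence in particular fixes $E_1(c_+) \cup \{c_-\}$. By \textbf{Theorem \ref{Ro00Theorem1}} this composition is the identity, so $\Phi = \Psi$. The main obstacle in this argument is not logical but conceptual: all the real work is already encoded in Theorem \ref{ExtendingIsometry}, whose proof crucially depends on the well-definedness of the panel-level isometries $\phi_s$ (Corollary \ref{phiswelldefined}) and thus on the wall-connectedness hypothesis. Once that machinery is in place, the corollary is essentially a bookkeeping assembly of Theorems \ref{Mu97Theorem1}, \ref{ExtendingIsometry}, and \ref{Ro00Theorem1}.
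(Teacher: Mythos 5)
Your proof is correct and follows exactly the same route as the paper: invoke Theorem \ref{Mu97Theorem1} to produce $\phi_+$ with a $\phi_+$-admissible pair $(c_-,c_-')$, apply Theorem \ref{ExtendingIsometry} (using wall-connectedness) to get $\phi_-$, glue via Lemma \ref{ZusammengesetzteIsometrie}, and obtain uniqueness from Theorem \ref{Ro00Theorem1}. The paper merely states this more tersely; your write-up spells out the intermediate steps but adds nothing new.
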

\begin{proof}
	By Theorem \ref{Mu97Theorem1}, Theorem \ref{ExtendingIsometry} and Lemma \ref{ZusammengesetzteIsometrie} we obtain an isometry $\Phi: \C \to \C'$ such that $\Phi \vert_{E_2(c_+) \cup \{c_-\}} = \phi$. The uniqueness follows from Theorem \ref{Ro00Theorem1}.
\end{proof}

\section{Wall-connected twin buildings}\label{Section: Wall-connected twin buildings}

\subsection*{Chamber systems}

Let $I$ be a set. A \textit{chamber system} over $I$ is a pair $\Cbf = \left( \C, (\sim_i)_{i\in I} \right)$ where $\C$ is a non-empty set whose elements are called \textit{chambers} and where $\sim_i$ is an equivalence relation on the set of chambers for each $i\in I$. Given $i\in I$ and $c, d \in \C$, then $c$ is called \textit{$i$-adjacent} to $d$ if $c \sim_i d$. The chambers $c, d$ are called \textit{adjacent} if they are $i$-adjacent for some $i \in I$.

A \textit{gallery} in $\Cbf$ is a sequence $(c_0, \ldots, c_k)$ such that $c_{\mu} \in \C$ for all $0 \leq \mu \leq k$ and such that $c_{\mu -1}$ is adjacent to $c_{\mu}$ for all $1 \leq \mu \leq k$. The chamber system $\Cbf$ is said to be \textit{connected}, if for any two chambers $c, d$ there exists a gallery $(c_0 = c, \ldots, c_k = d)$. For a subset $\mathcal{E} \subseteq \C$ the restriction $(\mathcal{E}, (\sim_i \vert_{\mathcal{E} \times \mathcal{E}})_{i\in I})$ is again a chamber system over $I$.

Let $\Delta = (\C, \delta)$ be a building of type $(W, S)$. Then we define the chamber system $\Cbf(\Delta)$ as follows: The set of chambers is given by the set of chambers $\C$ of $\Delta$ and two chambers $x, y$ are defined to be $s$-adjacent if $\delta(x, y) \in \langle s \rangle$.

\subsection*{$3$-spherical twin buildings}

Let $\Delta = (\Delta_+, \Delta_-, \delta_*)$ be a twin building of type $(W, S)$ and let $\epsilon \in \{+,-\}$. For each pair $(c, k) \in \Cp \times \NNN$ we put $c^{\mathrm{op}(k)} := \{ d\in \Cm \mid \ell_*(c, d) \leq k \}$. We remark that $c^{\mathrm{op}} = c^{\mathrm{op}(0)}$ for any $c\in \C$. We say that the twin building $\Delta$ satisfies Condition $\co_k$, if for any $\epsilon \in \{+, -\}$ and any chamber $c\in \Cp$ the chamber system given by the restriction of $\Cbf(\Delta_{-\epsilon})$ to $c^{\mathrm{op}(k)}$ is connected. We say for short that $\Delta$ satisfies Condition $\co$ if it satisfies Condition $\co_0$.

For buildings $\Delta = (\C, \delta)$ of spherical type $(W, S)$ we have also a notion of $c^{\mathrm{op}(k)}$, i.e. $c^{\mathrm{op}(k)} = \{ d\in \C \mid \ell(c, d) \geq \ell(r_S) -k \}$. We say that a spherical building satisfies Condition $\co_k$ if $c^{\mathrm{op}(k)}$ is connected for any chamber $c\in \C$.

\begin{proposition}\label{3sphco1}
	Let $(W, S)$ be a spherical Coxeter system of rank $3$ such that $m_{st} \leq 4$ for all $s, t \in S$ and let $\Delta = (\Delta_+, \Delta_-, \delta_*)$ be a thick twin building of type $(W, S)$. Then $\Delta_+, \Delta_-$ and $\Delta$ satisfy Condition $\co_1$.
\end{proposition}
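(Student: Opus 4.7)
My plan is to prove all three assertions in parallel by first establishing Condition $\co_1$ for the spherical buildings $\Delta_\pm$ and then transporting the result to the twin building $\Delta$ via twin apartments. Two simple observations, valid for any thick spherical building, streamline the task. Firstly, if $d \in c^{\mathrm{op}(1)}$ satisfies $\ell(c, d) = \ell(r_S) - 1$, writing $\delta(c, d) = r_S s$ one obtains $\proj_{P_s(d)} c = d$ from the projection formula, and thickness provides at least two chambers in $P_s(d) \cap c^{\mathrm{op}(0)}$. Secondly, if $d \in c^{\mathrm{op}(0)}$ and $s \in S$, then the entire panel $P_s(d)$ lies in $c^{\mathrm{op}(1)}$, since $\proj_{P_s(d)} c$ has codistance exactly $\ell(r_S) - 1$. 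Together these observations reduce the whole statement to showing that $c^{\mathrm{op}(0)}$ is contained in a single connected component of $c^{\mathrm{op}(1)}$.

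For the building part, I would argue by induction on $\ell(d_1, d_2)$ for $d_1, d_2 \in c^{\mathrm{op}(0)}$. Examining the first step $d_1 \sim_s e_1$ of a minimal gallery to $d_2$, either $e_1 \in c^{\mathrm{op}(0)}$ and induction carries through, or $\proj_{P_s(d_1)} d_2 = \proj_{P_s(d_1)} c$, which is the obstructing case. In that bad case I would enlarge the picture to a rank-$2$ residue $R$ of type $\{s, t\}$, where $t \in S$ is chosen so that a length-$2$ prefix $st$ of a minimal gallery from $d_1$ to $d_2$ leaves the panel $P_s(d_1)$. Inside $R$, which is a generalized $m_{st}$-gon with $m_{st} \in \{2, 3, 4\}$, the set $R \cap c^{\mathrm{op}(1)}$ is explicitly described via $\proj_R c$ and the projection formula, and a direct case analysis produces a gallery in $R \cap c^{\mathrm{op}(1)}$ from $d_1$ to some $d_1' \in c^{\mathrm{op}(0)}$ with $\ell(d_1', d_2) < \ell(d_1, d_2)$; this completes the induction.

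For the twin-building assertion, fix $c \in \Cp$ and $d_1, d_2 \in c^{\mathrm{op}(1)} \cap \Cm$. Working inside a twin apartment $A(c, c^*)$ with $c^* \in c^{\mathrm{op}}$, the codistance $\delta_*(c, \cdot)$ agrees with the building distance $\delta_-(c^*, \cdot)$, so the rank-$2$ residue analysis transports to this setting through Lemma \ref{AB08Lemma5.139}, which controls how $\ell_*(c, \cdot)$ changes along $s$-adjacencies in $\Delta_-$. The main obstacle lies precisely in the rank-$2$ case analysis: Condition $\co_0$ itself is known to fail in the exceptional small generalized polygons $B_2(2)$, $G_2(2)$, $G_2(3)$ and ${}^2F_4(2)$ treated in \cite{MR95}, so the argument must genuinely exploit the additional freedom supplied by $c^{\mathrm{op}(1)}$; the assumption $m_{st} \leq 4$ is the quantitative hypothesis that makes the required detours available in each of the admissible types $A_3$, $B_3$, $A_1 \times A_2$ and $A_1 \times A_1 \times A_1$.
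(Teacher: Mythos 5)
Your reduction at the outset is correct and worth keeping: if $d$ is opposite $c$ then every panel on $d$ lies in $c^{\mathrm{op}(1)}$, and if $d \in c^{\mathrm{op}(1)}\setminus c^{\mathrm{op}(0)}$ then $d$ is the gate of a panel whose other chambers are all opposite $c$; so connectedness of $c^{\mathrm{op}(1)}$ is equivalent to $c^{\mathrm{op}(0)}$ lying in a single component of $c^{\mathrm{op}(1)}$. Your approach from that point on, however, is genuinely different from the paper's: the paper does not run a fresh induction at all, but simply invokes \cite[Lemma $6.1$, Theorem $1.5$]{MR95} to get the stronger Condition $\co$ when no $\{s,t\}$-residue is $C_2(2)$, and then asserts that the one remaining case (a $C_2(2)$ residue) is an elementary computation with $\co_1$. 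Your plan is to prove connectedness from scratch by descent on $\ell(d_1,d_2)$; that is more self-contained, but it re-derives part of \cite{MR95} and leaves the genuinely hard content exactly where the paper locates it.

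The inductive step as you have stated it has a real gap. After you route the bad case into the rank-$2$ residue $R=R_{\{s,t\}}(d_1)$, you need a chamber $d_1'\in R\cap c^{\mathrm{op}(0)}$ with $\ell(d_1',d_2)<\ell(d_1,d_2)$ and a gallery from $d_1$ to $d_1'$ inside $R\cap c^{\mathrm{op}(1)}$. By the gate property, $\ell(z,d_2)=\ell(z,\proj_R d_2)+\ell(\proj_R d_2,d_2)$ for $z\in R$, so the requirement is $\ell(d_1',\proj_R d_2)<\ell(d_1,\proj_R d_2)$. But $R\cap c^{\mathrm{op}(0)}$ is precisely the set of chambers of $R$ opposite $\proj_R c$ in $R$ (because $\ell(c,z)=\ell(c,\proj_R c)+\ell(\proj_R c,z)$ and $\ell(c,\proj_R c)=\ell(r_S)-m_{st}$ once $R$ meets $c^{\mathrm{op}(0)}$). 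If it happens that $\proj_R c=\proj_R d_2$, then every $d_1'$ opposite $\proj_R c$ in $R$ satisfies $\ell(d_1',\proj_R d_2)=m_{st}=\ell(d_1,\proj_R d_2)$, and no progress is made. You do not exclude this coincidence; it is compatible with all the constraints you have (for instance in type $A_3$ with $\ell(d_1,d_2)=\ell(r_S)=2m_{st}$), and one would have to argue either that some choice of the initial two types $s,t$ avoids it, or replace the decrease of $\ell(d_1,d_2)$ by a different well-founded quantity --- the paper's Corollary \ref{Corollary: 3sph co1} uses descent on the worst chamber along the gallery rather than on $\ell(d_1,d_2)$ for precisely this kind of reason. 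Beyond that structural issue, the ``direct case analysis'' in the $m_{st}=4$ generalized quadrangle isomorphic to $C_2(2)$ is exactly what the paper calls the elementary calculation and is the entire nontrivial content of the statement; your sketch acknowledges but does not perform it. Finally, two minor points: you omit $A_1\times B_2$ from the list of admissible rank-$3$ spherical types with all $m_{st}\le 4$, and the passage to the twin-building version via a twin apartment only controls codistances for chambers inside that apartment, so it does not by itself reduce Condition $\co_1$ for $\Delta$ to the spherical case --- the twin argument has to be carried out in parallel using the projection and codistance calculus, as the paper does in Corollary \ref{Corollary: 3sph co1}.
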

\begin{proof}
	If $m_{st} \leq 3$ for all $s, t \in S$ the assertion follows from \cite[Lemma $6.1$ and Theorem $1.5$]{MR95}. Suppose now that $m_{st} = 4$ for some $s, t \in S$. Again by \cite{MR95} the assertion holds if the $\{s, t\}$-residues are not isomorphic to the building associated with the group $C_2(2)$. In all the cases we mentioned so far, $\Delta_+, \Delta_-, \Delta_*$ even satisfies the stronger condition $\co$. If the $\{s, t\}$-residue is isomorphic to the building associated to the group $C_2(2)$, then the verification of Condition $\co_1$ boils down to an elementary calculation.
\end{proof}

A \textit{twin residue} is a pair $(R, T)$ of opposite residues in a twin building. It is a basic fact that a twin residue is again a twin building. The \textit{type} (resp. \textit{rank}) of a twin residue is defined to be the type (resp. rank) of the residues. Note that if $P$ is a panel contained in $R$ and if $(P_0, \ldots, P_k)$ is a $P$-compatible path in the twin residue $(R, T)$, then it is also a $P$-compatible path in the twin building $\Delta$. In particular, if $c\in R$ and if $s \in S$ is contained in the type of $(R, T)$ and if $Q_1, Q_2 \subseteq T$ are wall-adjacent of type $(c, s)$ in $(R, T)$, then $Q_1, Q_2$ are wall-adjacent of type $(c, s)$ in $\Delta$.

\begin{corollary}\label{Corollary: 3sph co1}
	Any $3$-spherical thick twin building $\Delta$ satisfies Condition $\co_1$.
\end{corollary}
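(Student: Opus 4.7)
The plan is to apply Proposition \ref{3sphco1} to every rank-$3$ twin residue of $\Delta$, and then to promote that local result to the whole twin building by a gallery-rewriting argument.

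For the local step, every rank-$3$ twin residue has spherical type $J$ of rank $3$. If $\langle J\rangle$ is irreducible, then by the classification of finite irreducible Coxeter groups $J$ is of type $A_3$, $B_3$, or $H_3$; thickness excludes $H_3$ since no thick building of type $H_3$ exists, and in the remaining cases $m_{st}\leq 4$, so Proposition \ref{3sphco1} applies directly. If $\langle J\rangle$ is reducible, then $J=\{s\}\sqcup\{t,u\}$ with $s$ commuting with both $t$ and $u$, so the twin residue decomposes as a product of a rank-$1$ and a rank-$2$ thick twin building. Codistance is additive over the product, so $c^{\mathrm{op}(1)}$ in the product equals $\{\bar c_1\}\times c_2^{\mathrm{op}(1)} \cup (T_1\setminus\{\bar c_1\})\times c_2^{\mathrm{op}}$, and an elementary check shows that, using the rank-$1$ factor as an adjacency bridge via $s$-moves, $\co_1$ for the product reduces to $\co_1$ for the rank-$2$ factor. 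The latter is a thick generalized $m_{tu}$-gon; by \cite{MR95} it satisfies $\co$ (hence $\co_1$) except in the four small exceptions $B_2(2)$, $G_2(2)$, $G_2(3)$, $^2F_4(2)$, and in these four cases $\co_1$ follows from a direct finite computation analogous to the one invoked at the end of the proof of Proposition \ref{3sphco1}.

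For the global step, fix $\epsilon\in\{+,-\}$, $c\in\Cp$, and $d_1,d_2\in c^{\mathrm{op}(1)}$. By (Tw$3$) every chamber in $c^{\mathrm{op}(1)}\setminus c^{\mathrm{op}}$ is adjacent to a chamber of $c^{\mathrm{op}}$, so it suffices to assume $d_1,d_2\in c^{\mathrm{op}}$. I would induct on $\ell_{-\epsilon}(d_1,d_2)$: choose a minimal gallery from $d_1$ to $d_2$ in $\Delta_{-\epsilon}$, fix a rank-$3$ residue $T\subseteq\C_{-\epsilon}$ containing the first few chambers of this gallery, and consider the opposite rank-$3$ residue $R\subseteq\Cp$ (so that $(R,T)$ is a rank-$3$ twin residue). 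By Lemma \ref{Mu97Lemma3.4} together with the projection formula $\delta_*(x,y)=\delta_*(x,\proj_T x)\delta_{-\epsilon}(\proj_T x,y)$, the set $T\cap c^{\mathrm{op}(1)}$ coincides with the $\co_1$-set of the twin residue $(R,T)$ about $\proj_R c$; the local step then produces an adjacency path inside $T\cap c^{\mathrm{op}(1)}$ from $d_1$ to some $d_1'\in T$ with $\ell_{-\epsilon}(d_1',d_2)<\ell_{-\epsilon}(d_1,d_2)$, and induction closes the argument.

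The main obstacle is precisely this global-to-local reduction: one must choose the rank-$3$ residue $T$ so that the local path inside $T\cap c^{\mathrm{op}(1)}$ strictly decreases the remaining gallery distance to $d_2$, and one must cleanly translate the global $\co_1$-question into a $\co_1$-question for the twin residue $(R,T)$ via the projection formula. This is the rank-$3$ analogue of the reduction used in \cite{MR95} to reduce Condition $\co$ to rank-$2$ residues; once the scaffolding is in place, the local step furnished by Proposition \ref{3sphco1} finishes the proof.
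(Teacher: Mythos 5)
Your two-step structure — establish $\co_1$ on rank-$3$ twin residues, then globalize by gallery-rewriting — matches the paper, but the global step you sketch is genuinely different from the paper's and, as you flag yourself, it is not closed.

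For the local step you are in fact more scrupulous than the paper's own text: Proposition \ref{3sphco1} carries the hypothesis $m_{st}\leq 4$, and your case split (irreducible $A_3$, $B_3$, $H_3$ with thick $H_3$ excluded; reducible product whose rank-$2$ factor may be a hexagon or octagon) pinpoints exactly where that hypothesis can fail, whereas the paper simply forms the twin residue $(R_J(y),R)$ and invokes Proposition \ref{3sphco1} without comment. One small slip: if $n_1$ denotes the unique chamber on the other side that is \emph{not} opposite $c_1$, the decomposition reads $c^{\mathrm{op}(1)}=\{n_1\}\times c_2^{\mathrm{op}}\,\cup\,(T_1\setminus\{n_1\})\times c_2^{\mathrm{op}(1)}$; you have the two pieces interchanged. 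The reduction to $\co_1$ of the rank-$2$ factor is nevertheless the right move.

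The global step is where the two arguments part ways, and where your version has a real gap. The paper follows \cite[Theorem~$1.5$]{MR95}: start from an \emph{arbitrary} gallery $(x=c_0,\ldots,c_k=y)$, take the first index $i$ at which $\ell(c,c_i)$ is maximal, so $\ell(c,c_{i-1})<\ell(c,c_i)\geq\ell(c,c_{i+1})$, and choose the rank-$3$ residue through $c_i$ to contain the types $s=\delta(c_{i-1},c_i)$, $t=\delta(c_i,c_{i+1})$ \emph{and} a descending letter $r$ with $\ell(\delta(c,c_{i-1})r)=\ell(c,c_{i-1})-1$. It is the presence of a descending direction in the residue that lets the local $\co_1$ argument (as in \cite[Lemma~$6.1$]{MR95}) replace $(c_{i-1},c_i,c_{i+1})$ by a detour with strictly smaller codistance; iterating flattens the gallery into $c^{\mathrm{op}(1)}$. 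Your ``march forward along a minimal gallery'' induction does not obviously produce a replacement $d_1'\in T\cap c^{\mathrm{op}(1)}$ strictly closer to $d_2$: the first step $e_1$ of a minimal gallery may already have $\ell_*(c,e_1)=2$, and nothing forces the connected set $T\cap c^{\mathrm{op}(1)}$ to contain a chamber nearer to $d_2$ than $d_1$ is. Moreover, after reducing to $d_1,d_2\in c^{\mathrm{op}}$, an inductive call at a $d_1'\in c^{\mathrm{op}(1)}\setminus c^{\mathrm{op}}$ falls outside your stated hypothesis. This is exactly the obstacle you name at the end; the paper's max-codistance descent avoids it by rewriting the gallery at its worst excursion rather than at its starting point.
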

\begin{proof}
	At first we convince ourselves that any rank $3$ residue (which is spherical by definition) satisfies $\co_1$. Let $R$ be a $J$-residue of rank $3$ and $x\in R$. For $y\in x^{\mathrm{op}}$ the residues $R_J(y), R$ are opposite in $\Delta$, i.e. $(R_J(y), R)$ is a thick spherical twin building. Hence the previous proposition implies that $R$ satisfies Condition $\co_1$.
	
	The proof is similar to the proof of \cite[Theorem $1.5$]{MR95}. Let $c$ be a chamber of $\Delta$ and let $x \neq y \in c^{\mathrm{op}(1)}$. Let $G = (x = c_0, \ldots, c_k = y)$ be a gallery. We can assume that $c_i \neq c_{i+1}$. Let $i$ be minimal such that $\ell(c, c_i) = \max\{ \ell(c, c_j) \mid 0 \leq j \leq k \}$. If $\ell(c, c_i) \leq 1$, we are done. Thus we can assume $\ell(c, c_i) >1$. Then $\ell(c, c_{i-1}) < \ell(c, c_i) \geq \ell(c, c_{i+1})$. Let $\delta(c_{i-1}, c_i) = s$ and $\delta(c_i, c_{i+1}) = t$ ($s=t$ is possible). As $\ell(c, c_{i-1}) \geq 1$, there exists $r\in S$ such that $\ell(\delta(c, c_{i-1})r) = \ell(c, c_{i-1}) -1$. Let $R$ be a $J$-residue containing $c_i$, where $\vert J \vert = 3$ and $\{r, s, t\} \subseteq J$. Using similar arguments as in \cite[Lemma $6.1$ and Theorem $1.5$]{MR95} we obtain a gallery $(c_0, \ldots, c_{i-1} = d_0, \ldots, d_m = c_{i+1}, \ldots, c_k)$ with $\ell(c, d_j) < \ell(c, c_i)$ for any $0 \leq j \leq m-1$. Iterating this procedure we get a gallery from $x$ to $y$ which is contained in $c^{\mathrm{op}(1)}$.
\end{proof}

\begin{theorem}\label{Theorem: rank 3 Condition wc}
	Let $\Delta$ be a $2$-spherical, thick twin building of type $(W, S)$ satisfying Condition $\co_1$. If any rank $3$ twin residue is wall-connected, then $\Delta$ is wall-connected.
\end{theorem}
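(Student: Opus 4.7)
The plan is to show wall-connectedness of $\Delta$ by using Condition $\co_1$ to produce a gallery in $c_-^{\mathrm{op}(1)}$ joining appropriate chambers of the two given panels, and then to lift each elementary step of that gallery to a path in $\Gamma_s(c_-)$ by working inside a suitable rank $3$ twin residue.

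Fix $(c_-, s) \in \C_- \times S$; the case $c \in \C_+$ is symmetric. Write $P := \P_s(c_-)$ and let $Q_1, Q_2$ be two vertices of $\Gamma_s(c_-)$. By definition of opposition of panels each $Q_i$ contains a chamber $x_i$ opposite to some $y_i \in P$, so $Q_i = \P_s(x_i)$. Since $\delta_-(c_-, y_i) \in \{1_W, s\}$ and $\delta_*(x_i, y_i) = 1_W$, Lemma \ref{AB08Lemma5.139} gives $\delta_*(x_i, c_-) \in \{1_W, s\}$, whence $x_i \in c_-^{\mathrm{op}(1)}$. Condition $\co_1$ then produces a gallery $(x_1 = z_0, z_1, \ldots, z_n = x_2)$ in $\Cbf(\Delta_+)$ with $z_j \in c_-^{\mathrm{op}(1)}$ for every $j$, and it is enough to connect $\P_s(z_j)$ to $\P_s(z_{j+1})$ in $\Gamma_s(c_-)$ for each $0 \leq j < n$.

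Fix such a $j$ and let $t \in S$ be the type of the adjacency $z_j \sim z_{j+1}$. If $t = s$ then $\P_s(z_j) = \P_s(z_{j+1})$ and there is nothing to do. Otherwise $t \neq s$; assuming $\vert S \vert \geq 3$ (the only case in which the rank $3$ hypothesis is non-vacuous), pick any $r \in S \setminus \{s, t\}$ and set $J := \{s, t, r\}$. Consider the $J$-residues $R_+ := R_J(z_j) = R_J(z_{j+1})$ and $R_- := R_J(c_-)$. Because $z_j \in c_-^{\mathrm{op}(1)}$, the chamber $z_j$ is opposite either $c_-$ itself or its $s$-neighbour in $P$, and both of these lie in $R_-$; hence $(R_+, R_-)$ is a rank $3$ twin residue in which $\P_s(z_j), \P_s(z_{j+1}) \subseteq R_+$ are $s$-panels opposite $P \subseteq R_-$.

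By the hypothesis the rank $3$ twin residue $(R_+, R_-)$ is wall-connected, so $\P_s(z_j)$ and $\P_s(z_{j+1})$ lie in the same connected component of the graph $\Gamma_s(c_-)$ computed inside $(R_+, R_-)$. The remark recorded immediately before the theorem asserts that a $P$-compatible path inside a twin residue is also a $P$-compatible path in $\Delta$, so wall-adjacencies inside $(R_+, R_-)$ remain wall-adjacencies in $\Delta$; this transports the internal connecting path to a path in the $\Gamma_s(c_-)$ of $\Delta$. Concatenating over $j$ yields a path from $Q_1 = \P_s(z_0)$ to $Q_2 = \P_s(z_n)$ in $\Gamma_s(c_-)$, completing the proof. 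The main technical point is verifying that $(R_+, R_-)$ is genuinely an opposite twin residue, and this is precisely where the strengthening from $\co$ to $\co_1$ is used: the $\co_1$-gallery keeps every intermediate chamber opposite to some chamber of $P$, which is exactly what is required to place each $z_j$ inside a rank $3$ twin residue opposite to $R_J(c_-)$.
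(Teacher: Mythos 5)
There is a genuine gap in your argument, and it is exactly the point that forces the paper's proof to be subtler. You claim that because $z_j \in c_-^{\mathrm{op}(1)}$, the chamber $z_j$ ``is opposite either $c_-$ itself or its $s$-neighbour in $P$,'' and you use this to conclude that $\P_s(z_j)$ is opposite to $P=\P_s(c_-)$. This is false. Condition $\co_1$ only guarantees $\delta_*(c_-,z_j)\in\{1_W\}\cup S$, so the intermediate chambers of your gallery can satisfy $\delta_*(c_-,z_j)=t'$ for \emph{any} $t'\in S$, not merely $t'\in\{1_W,s\}$. When $t'\notin\{1_W,s\}$, the chamber of $\C_-$ opposite $z_j$ that is adjacent to $c_-$ is $t'$-adjacent to $c_-$, hence does not lie in $P$; a short computation with (Tw2) and Lemma \ref{AB08Lemma5.139} then shows that \emph{no} chamber of $\P_s(z_j)$ is opposite to any chamber of $P$. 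Thus $\P_s(z_j)$ is not a vertex of $\Gamma_s(c_-)$, so ``connecting $\P_s(z_j)$ to $\P_s(z_{j+1})$ in $\Gamma_s(c_-)$'' is not even well-posed, and the residues $\P_s(z_j)$, $\P_s(z_{j+1})$ need not be opposite to $P$ inside the rank $3$ twin residue $(R_+,R_-)$ you construct.

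The paper avoids this by never treating the $\co_1$-gallery one step at a time. Starting from $x,y\in c^{\mathrm{op}}$ (not merely in $c^{\mathrm{op}(1)}$) and a $\co_1$-gallery $(c_0=x,c_1,c_2,\ldots,c_k=y)$, it sets $J=\{\delta(c_0,c_1),\delta(c_1,c_2)\}$ and shows, using that $\ell_*(c,c_i)\le 1$ throughout, that $\delta_*(c,c_2)$ lies in $\{1_W\}\cup J$; by (Tw3) one can then pick $x'\in R_J(c_0)\cap E_1(c_2)$ that is genuinely opposite to $c$. Wall-connectedness of a rank $3$ twin residue $(R_K(c),R_K(c_0))$ with $J\cup\{s\}\subseteq K$ connects $\P_s(x)$ to $\P_s(x')$, and one recurses on the strictly shorter gallery $(x',c_2,\ldots,c_k)$, keeping both endpoints in $c^{\mathrm{op}}$ as an invariant. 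This two-step replacement, which produces a new endpoint opposite to $c$ at every stage, is precisely the idea your step-by-step decomposition is missing; with Condition $\co$ in place of $\co_1$ your one-step approach would work, but under $\co_1$ alone it does not.
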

\begin{proof}
	Let $\epsilon \in \{+,-\}, c\in \Cp$ and $s\in S$. We have to show that $\Gamma_s(c)$ is connected. Let $x, y \in c^{\mathrm{op}}$. By assumption there exists a gallery $(c_0 = x, \ldots, c_k = y)$ such that $\ell_*(c, c_i) \leq 1$ for all $0 \leq i \leq k$. Let $J = \{ \delta_{-\epsilon}(c_0, c_1), \delta_{-\epsilon}(c_1, c_2) \}$. Let $x' \in R_J(c_0) \cap E_1(c_2)$ be opposite to $c$ and let $J' = J \cup \{s\}$. Then $\vert J' \vert \leq 3$. Let $K \subseteq S$ with $\vert K \vert = 3$ and $J' \subseteq K$. By assumption the twin residue $(R_K(c), R_K(c_0))$ is wall-connected. Thus there exist $P_0 = \P_s(x), \ldots, P_m = \P_s(x')$ such that $P_{i-1}, P_i$ are wall-adjacent of type $(c, s)$ for all $1 \leq i \leq m$. Applying induction to the shorter gallery $(x', c_2, \ldots, c_k)$ the claim follows.
\end{proof}

\begin{corollary}\label{3sphwc}
	Every $3$-spherical, thick twin building is wall-connected.
\end{corollary}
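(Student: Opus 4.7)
The plan is to reduce the claim to Theorem \ref{Theorem: rank 3 Condition wc} by verifying both of its hypotheses. Since a $3$-spherical twin building is trivially $2$-spherical, and Corollary \ref{Corollary: 3sph co1} already provides that every such $\Delta$ satisfies Condition $\co_1$, the only remaining obligation is to show that every rank $3$ twin residue of $\Delta$ is wall-connected. This is the content I would actually have to establish.

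Because $\Delta$ is $3$-spherical, any rank $3$ twin residue $(R, T)$ of $\Delta$ has spherical type, so $(R, T)$ is itself a thick spherical twin building of rank $3$. The problem therefore reduces to: every thick spherical twin building of rank $3$ is wall-connected. To prove this, I would split by whether Condition $\co$ holds. In the generic case, where the rank $2$ residues avoid the exceptional list $\{B_2(2), G_2(2), G_2(3), {}^2F_4(2)\}$, Condition $\co$ holds by \cite{MR95}; the introduction of the present paper asserts that $\co$ implies wall-connectedness, and one can verify this by observing that any two panels opposite to $\P_s(c)$ are of the form $\P_s(x), \P_s(y)$ for $x, y \in c^{\mathrm{op}}$, and an edge $x\sim_t y$ in $\Cbf(\Delta_{-\epsilon})$ restricted to $c^{\mathrm{op}}$ produces, via Lemma \ref{AB08Exercise5.168+projbijection+Mu97Lemma3.5} and Theorem \ref{parallelPcomppath}, a common panel $T$ with $\P_s(c)$-compatible paths of equal length and type from $\P_s(x)$ and $\P_s(y)$.

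For the exceptional rank $3$ spherical twin buildings (those having a rank $2$ residue in the list above), Condition $\co$ can fail but Condition $\co_1$ still holds by Proposition \ref{3sphco1}. Here I would need to refine the above argument: for $x, y \in c^{\mathrm{op}(1)}$ joined by a gallery within $c^{\mathrm{op}(1)}$, each adjacency step takes place inside a rank $2$ or rank $3$ residue, and one verifies wall-adjacency of $\P_s(x)$ and $\P_s(y)$ by explicit construction of a common panel $T$ together with matching $\P_s(c)$-compatible paths, using the finite structure of the small exceptional residues.

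The main obstacle is precisely this last step, namely handling the four exceptional rank $2$ residues where $\co$ fails. Once that case-by-case verification (or a uniform lemma producing wall-adjacency from the $\co_1$ neighborhood of $c$) is in place, applying Theorem \ref{Theorem: rank 3 Condition wc} immediately yields the corollary.
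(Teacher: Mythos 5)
Your setup matches the paper's: invoke Corollary \ref{Corollary: 3sph co1} for Condition $\co_1$, then feed Theorem \ref{Theorem: rank 3 Condition wc} by showing that every rank $3$ twin residue $(R,Z)$ is wall-connected. But you then diverge, and the divergence is where your argument breaks down.

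You split on whether $(R,Z)$ satisfies Condition $\co$ and leave the exceptional cases (residues involving $B_2(2)$, $G_2(2)$, $G_2(3)$, ${}^2F_4(2)$) as an unresolved obstacle, to be handled by ``case-by-case verification'' or some unstated uniform lemma. This is a genuine gap: you have not proved the statement, you have identified that it would follow if you could also prove something else. Moreover, the generic case as you sketch it risks circularity --- the paper's Corollary \ref{coimplieswc} (``$\co$ implies wall-connected'') explicitly points back to Corollary \ref{3sphwc} for its argument, so you cannot quote it here; and the one-step-of-a-gallery-to-wall-adjacency transfer you gesture at is not given by the lemmas you cite without further work.

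The paper avoids the dichotomy entirely. Inside a rank $3$ twin residue $(R,Z)$ it never appeals to $\co$ or $\co_1$; instead it uses the sphericity of $(R,Z)$ directly. Setting $T := \proj_Z \P_s(c)$, one shows via Lemma \ref{AB08Exercise5.168+projbijection+Mu97Lemma3.5} and Lemma \ref{GeneralizationDMVM11Lemma17} that $T$ is a single panel in $Z$ parallel to $\P_s(c)$ and to \emph{every} $Q_i \in \P_s(c)^{\mathrm{op}}$ contained in $Z$, with $\delta(Q_1,T) = \delta(Q_2,T)$. Lemma \ref{DMVMLemma19} gives a compatible path from $Q_1$ to $T$, Lemma \ref{DMVM11Lemma26} transports it to a compatible path from $Q_2$ to $T$ of the same length and type, and the length condition $\ell(\delta(Q_i,T))+1 = \ell_*(c,\proj_T c)$ together with Theorem \ref{P-compatiblepath} upgrades both to $\P_s(c)$-compatible paths. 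Thus \emph{any two} vertices of $\Gamma_s(c)$ in $Z$ are wall-adjacent: the graph restricted to the twin residue is complete, hence trivially connected, with no case analysis and no appeal to $\co$. That uniformity is exactly what your approach is missing, and it is why the paper's argument also handles the exceptional small residues without any extra work.
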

\begin{proof}
	Let $\Delta$ be a $3$-spherical thick twin building. By Corollary \ref{Corollary: 3sph co1} $\Delta$ satisfies Condition $\co_1$. Let $(R, Z)$ be a twin residue of $\Delta$ of type $J$ and rank $3$. Let $c\in R$ and $s\in J$. If $(R, Z)$ is wall-connected, the claim follows from the previous theorem. Thus let $Q_1, Q_2 \subseteq Z$ such that $Q_1, Q_2 \in \P_s(c)^{\mathrm{op}}$. Then $T := \proj_Z \P_s(c)$ is a panel which is parallel to $\P_s(c), Q_1, Q_2$ by Lemma \ref{AB08Exercise5.168+projbijection+Mu97Lemma3.5} and Lemma \ref{GeneralizationDMVM11Lemma17}. Let $w := \delta(Q_1, T) = \delta(Q_2, T)$. Then $\ell(sw) = \ell(w) +1$, since $Q_1, Q_2$ are $s$-panels. Let $t$ be the type of $T$. By Lemma \ref{DMVMLemma14} we have $t = w^{-1}sw$. By Lemma \ref{DMVMLemma19} there exists a compatible path $(P_0 = Q_1, \ldots, P_k = T)$. Using Lemma \ref{DMVM11Lemma26} there exists a compatible path $(P_0' = Q_2, \ldots, P_k' = T)$ of the same length and type. Since $\ell(\delta(Q_1, T)) +1 = \ell(\delta(Q_2, T)) +1 = \ell_*(c, \proj_T c)$, both compatible paths are $\P_s(c)$-compatible by Theorem \ref{P-compatiblepath} and $(R, Z)$ is wall-connected.
\end{proof}

\begin{corollary}\label{coimplieswc}
	Every $2$-spherical, thick twin building which satisfies Condition $\co$ is wall-connected.
\end{corollary}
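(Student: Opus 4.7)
The plan is to adapt the proof of Corollary~\ref{3sphwc} to the present setting by replacing Condition~$\co_1$ with the stronger Condition~$\co$ and rank-$3$ residues with rank-$2$ residues (available because $\Delta$ is $2$-spherical).

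Fix $(c,s)\in\C\times S$, say with $c\in\C_{-\epsilon}$, and let $Q_1,Q_2$ be two vertices of $\Gamma_s(c)$. Since $\P_s(c)$ and $Q_i$ are opposite panels of type $\{s\}$, Lemma~\ref{Mu97Lemma3.4} gives $\delta_*(c,\proj_{Q_i}c)=s$, and Lemma~\ref{AB08Lemma5.139} then forces the other chamber $y_i\in Q_i$ to satisfy $\delta_*(c,y_i)=1_W$; thus $y_i\in c^{\mathrm{op}}$ and $Q_i=\P_s(y_i)$. Condition~$\co$ now supplies a gallery $y_1=z_0,z_1,\ldots,z_k=y_2$ entirely inside $c^{\mathrm{op}}$, and it suffices to prove that consecutive panels $\P_s(z_{j-1})$ and $\P_s(z_j)$ are wall-adjacent of type $(c,s)$.

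If $\delta_+(z_{j-1},z_j)=s$ the two panels coincide and there is nothing to do. Otherwise set $t:=\delta_+(z_{j-1},z_j)\neq s$; by $2$-sphericality $\{s,t\}$ is spherical, so $R:=R_{\{s,t\}}(z_{j-1})$ is a spherical rank-$2$ residue containing both $\P_s(z_{j-1})$ and $\P_s(z_j)$. The common endpoint will be $T:=\proj_R\P_s(c)$; by Lemma~\ref{GeneralizationDMVM11Lemma17} this is a panel parallel to $\P_s(c)$, $\P_s(z_{j-1})$, and $\P_s(z_j)$. Lemma~\ref{DMVMLemma19} produces a compatible path from $\P_s(z_{j-1})$ to $T$, and Lemma~\ref{DMVM11Lemma26} transports it to a compatible path from $\P_s(z_j)$ to $T$ of the same length and type; the required equality $\delta(\P_s(z_{j-1}),T)=\delta(\P_s(z_j),T)$ follows from Lemma~\ref{GeneralizationDMVM11Lemma14}, since the element $w\in W$ satisfying $\operatorname{type}(T)=w^{-1}sw$ and $\ell(sw)=\ell(w)+1$ is uniquely determined in the dihedral group $\langle s,t\rangle$.

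To upgrade both compatible paths to $\P_s(c)$-compatible ones, I apply Theorem~\ref{P-compatiblepath}. Its numerical hypothesis $\ell(\delta(\P_s(z_j),T))+1=\ell_*(c,\proj_T c)$ is verified as follows: because $T=\proj_R\P_s(c)$ and $T\subseteq R$, Lemma~\ref{residueprojectionabsorbtion} yields $\proj_T c=\proj_R c$, and Lemma~\ref{Mu97Lemma3.4} applied to the opposite rank-$2$ twin residue $(R,R_{\{s,t\}}(c))$ then gives $\ell_*(c,\proj_T c)=\ell(r_{\{s,t\}})=m_{st}$; meanwhile $\P_s(z_j)$ and $T$ are two distinct parallel panels inside the spherical residue $R$, so Lemma~\ref{DMVMLemma18} gives $\ell(\delta(\P_s(z_j),T))=m_{st}-1$, and the identity holds. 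Concatenating the resulting wall-adjacencies along the gallery provides a path in $\Gamma_s(c)$ from $Q_1$ to $Q_2$. The main obstacle I expect is the bookkeeping needed to verify that $T$ is genuinely a panel (not a single chamber), that it is parallel to the requisite panels, and that $\delta(\P_s(z_{j-1}),T)$ and $\delta(\P_s(z_j),T)$ coincide on the nose so that Lemma~\ref{DMVM11Lemma26} can be applied.
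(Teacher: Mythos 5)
Your proof is correct and takes essentially the paper's approach: the paper disposes of this corollary with a one-line pointer to ``similar arguments'' as in Theorem \ref{Theorem: rank 3 Condition wc} and Corollary \ref{3sphwc}, and you carry out exactly that adaptation, replacing $\co_1$ by $\co$ and rank-$3$ twin residues by rank-$2$ ones, and reusing the $T:=\proj_R\P_s(c)$ construction. The bookkeeping you flag is routine to complete: $c\in\P_s(c)$ gives $\proj_R c\in T$, and the projection formula for the opposite rank-$2$ twin residue shows $\proj_R c$ is at gallery distance $m_{st}\geq 2$ from both $z_{j-1}$ and $z_j$, so $T$ is distinct from $\P_s(z_{j-1})$ and $\P_s(z_j)$ and Lemma \ref{DMVMLemma18} applies; also note that the reference justifying $\delta(\P_s(z_{j-1}),T)=\delta(\P_s(z_j),T)$ should be Lemma \ref{DMVMLemma14} rather than its twin analogue, since both panels lie in the same half.
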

\begin{proof}
	Using similar arguments as in Theorem \ref{Theorem: rank 3 Condition wc} and Corollary \ref{3sphwc} the claim follows.
\end{proof}

\subsection*{Wall-connected RGD-systems}

A \textit{reflection} is an element of $W$ that is conjugate to an element of $S$. For $s\in S$ we let $\alpha_s := \{ w\in W \mid \ell(sw) > \ell(w) \}$ be the \textit{simple root} corresponding to $s$. A \textit{root} is a subset $\alpha \subseteq W$ such that $\alpha = v\alpha_s$ for some $v\in W$ and $s\in S$. We denote the set of all roots by $\Phi$. A root $\alpha \in \Phi$ is called \textit{positive} (resp. \textit{negative}), if $1_W \in \alpha$ (resp. $1_W \notin \alpha$). We let $\Phi_+$ (resp. $\Phi_-$) be the set of all positive (resp. negative) roots. For each root $\alpha \in \Phi$ we denote the \textit{opposite root} by $-\alpha$ and we denote the unique reflection which interchanges these two roots by $r_{\alpha}$. Two roots $\alpha \neq \beta \in \Phi$ are called \textit{prenilpotent} (or $\{\alpha, \beta\}$ is called a \textit{prenilpotent pair}) if $\alpha \cap \beta \neq \emptyset \neq (-\alpha) \cap (-\beta)$. For a prenilpotent pair $\{ \alpha, \beta \}$ we define $[\alpha, \beta] := \{ \gamma \in \Phi \mid \alpha \cap \beta \subseteq \gamma \text{ and } (-\alpha) \cap (-\beta) \subseteq -\gamma \}$ and $(\alpha, \beta) := [\alpha, \beta] \backslash \{ \alpha, \beta \}$.

An \textit{RGD-system of type $(W, S)$} is a pair $\mathcal{D} = \left( G, \left( U_{\alpha} \right)_{\alpha \in \Phi}\right)$ consisting of a group $G$ together with a family of subgroups $U_{\alpha}$ (called \textit{root groups}) indexed by the set of roots $\Phi$, which satisfies the following axioms, where $H := \bigcap_{\alpha \in \Phi} N_G(U_{\alpha}), U_{\pm} := \langle U_{\alpha} \mid \alpha \in \Phi_{\pm} \rangle$:
\begin{enumerate}[label=(RGD\arabic*), leftmargin=*] \setcounter{enumi}{-1}
	\item For each $\alpha \in \Phi$, we have $U_{\alpha} \neq \{1\}$.
	
	\item For each prenilpotent pair $\{ \alpha, \beta \} \subseteq \Phi$, the commutator group $[U_{\alpha}, U_{\beta}]$ is contained in the group $U_{(\alpha, \beta)} := \langle U_{\gamma} \mid \gamma \in (\alpha, \beta) \rangle$.
	
	\item For every $s\in S$ and each $u\in U_{\alpha_s} \backslash \{1\}$, there exists $u', u'' \in U_{-\alpha_s}$ such that the product $m(u) := u' u u''$ conjugates $U_{\beta}$ onto $U_{s\beta}$ for each $\beta \in \Phi$.
	
	\item For each $s\in S$, the group $U_{-\alpha_s}$ is not contained in $U_+$.
	
	\item $G = H \langle U_{\alpha} \mid \alpha \in \Phi \rangle$.
\end{enumerate}

It is well-known that any RGD-system $\mathcal{D}$ acts on a twin building, which is denoted by $\Delta(\mathcal{D})$ (cf. \cite[Section $8.9$]{AB08}). This twin building is a so-called \textit{Moufang twin building} (cf. \cite[Section $8.3$]{AB08}). There is a distinguished pair of opposite chambers in $\Delta(\mathcal{D})$, which we will denote by $(c_+, c_-)$.

\begin{lemma}\label{AB08Cor8.32}
	For $\epsilon \in \{+, -\}$ the group $U_{\epsilon}$ acts simply transitively on the set of chambers opposite $c_{\epsilon}$.
\end{lemma}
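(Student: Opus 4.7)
The plan is to deduce the claim from the construction of the twin building $\Delta(\mathcal{D})$ attached to an RGD-system and the Birkhoff decomposition; the assertion is ultimately \cite[Corollary 8.32]{AB08}, so I would present a short sketch using the standard setup of \cite[Chapter 8]{AB08}. Fix $\epsilon \in \{+,-\}$; the two signs are symmetric, so I will think of $\epsilon = +$. By the very definition of $\Delta(\mathcal{D})$, the group $G$ acts on $\C$ and the stabiliser of $c_\epsilon$ is $B_\epsilon := HU_\epsilon$. In particular $U_\epsilon$ fixes $c_\epsilon$, hence $U_\epsilon$ acts on the set $c_\epsilon^{\mathrm{op}}$ of chambers opposite to $c_\epsilon$, and the action is well defined.

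The key ingredient that I would invoke is the Birkhoff decomposition $G = \bigsqcup_{w \in W} B_\epsilon \, w \, B_{-\epsilon}$, which follows from axioms (RGD0)--(RGD4) by the argument given in \cite[Section 8.9]{AB08}. Under the identification of chambers of $\Delta_{-\epsilon}$ with cosets $gB_{-\epsilon}$, the Weyl codistance satisfies $\delta_*(c_\epsilon, gB_{-\epsilon}) = w$ precisely when $g$ lies in the double coset $B_\epsilon\, w\, B_{-\epsilon}$. Consequently a chamber $d = gB_{-\epsilon}$ is opposite to $c_\epsilon$ if and only if $g \in B_\epsilon B_{-\epsilon} = HU_\epsilon \cdot HU_{-\epsilon} = U_\epsilon B_{-\epsilon}$. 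Writing $g = u b$ with $u \in U_\epsilon$ and $b \in B_{-\epsilon}$, one gets $d = g B_{-\epsilon} = u \cdot c_{-\epsilon}$, which establishes transitivity of the $U_\epsilon$-action on $c_\epsilon^{\mathrm{op}}$.

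For freeness, suppose $u \in U_\epsilon$ satisfies $u \cdot c_{-\epsilon} = c_{-\epsilon}$. Then $u \in U_\epsilon \cap B_{-\epsilon}$, and so $u$ lies in the $w = 1_W$ component of the Birkhoff decomposition interpreted from the opposite side. The disjointness of the double cosets, combined with the $m(u)$-trick from axiom (RGD2) (which implements the reflections $r_{\alpha_s}$ and controls how positive and negative root groups intersect), forces $U_\epsilon \cap B_{-\epsilon} = \{1\}$, giving $u = 1$. The main substantive step in a self-contained write-up is the Birkhoff decomposition itself together with the intersection $U_\epsilon \cap B_{-\epsilon} = \{1\}$; both are classical but nontrivial consequences of the RGD axioms, and since they are proved in detail in \cite[Chapter 8]{AB08}, I would simply cite Corollary 8.32 there for the final statement.
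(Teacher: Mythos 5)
Your proposal is correct and ultimately rests on the same source as the paper: the paper's proof is simply the citation of \cite[Corollary~8.32]{AB08}, and you also conclude by citing that corollary. Your additional sketch of the Birkhoff decomposition and the intersection $U_\epsilon \cap B_{-\epsilon} = \{1\}$ is accurate (though the triviality of that intersection relies on (RGD3), not only the $m(u)$-trick from (RGD2)), but it is exposition of the cited result rather than a different route.
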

\begin{proof}
	This is \cite[Corollary $8.32$]{AB08}.
\end{proof}

We say that an RGD-system $\mathcal{D} = \left( G, \left( U_{\alpha} \right)_{\alpha \in \Phi}\right)$ is \textit{wall-connected}, if the following condition is satisfied
\begin{equation}
\forall (\epsilon, s)\in \{+, -\} \times S: U_{\epsilon} = \langle U_{\beta} \mid \beta \in \Phi_{\epsilon}, o(r_{\beta}s) < \infty \rangle \tag{$\mathrm{wc}$}
\end{equation}

For the notion of \textit{twin roots} we refer to \cite[Section $5.8.5$]{AB08}. Let $\alpha$ be a twin root. Then we define the \textit{wall} associated to $\alpha$ by the set of all panels $P$ such that $P$ is stabilized by $r_{\alpha}$.

\begin{lemma}\label{parallelpanelssamewall}
	Let $\epsilon \in \{+,-\}, P \subseteq \Cp, Q \subseteq \Cm$ be two parallel panels and let $s\in S$ be the type of $P$. Then the reflection $s$ stabilizes $P$ and $Q$.
\end{lemma}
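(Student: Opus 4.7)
The plan is to exhibit, via a carefully chosen twin apartment $\Sigma$, a reflection of $W$ that simultaneously swaps the two chambers of $P \cap \Sigma$ and of $Q \cap \Sigma$, and then to promote it to a building automorphism stabilizing $P$ and $Q$ via the RGD/Moufang structure of $\Delta(\mathcal{D})$.

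First I would invoke Lemma \ref{GeneralizationDMVM11Lemma14} to record that $Q$ has type $t := w^{-1}sw$, where $w := \delta_*(x, \proj_Q x)$ is independent of $x \in P$, and $\ell(sw) = \ell(w)-1$. I pick $x \in P$, set $y := \proj_Q x$, and choose a twin apartment $\Sigma = A(c_+, c_-)$ containing both $x$ and $y$ (such a twin apartment exists by \cite[Proposition $5.179(3)$]{AB08}). Convexity of $\Sigma$ together with the thinness of its halves forces $P \cap \Sigma = \{x, x''\}$ and $Q \cap \Sigma = \{y, y''\}$; since $\proj_Q x'' \in Q \cap \Sigma$ by convexity and $\proj_Q$ restricts to a bijection $P \to Q$ (Lemma \ref{parallelresidues} combined with Lemma \ref{AB08Exercise5.168+projbijection+Mu97Lemma3.5}$(b)$), one has $y'' = \proj_Q x''$, and consequently $\delta_*(x'', y'') = w$ by Lemma \ref{GeneralizationDMVM11Lemma14} again.

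Next I would identify $\Sigma$ with the twin Coxeter complex of $(W, S)$ in the standard way via the basepoints $c_\pm$, so that chambers of $\Sigma_\pm$ correspond to elements of $W$ and the codistance reads $\delta_*(v, u) = v^{-1}u$. Under this identification, $x \leftrightarrow v$, $x'' \leftrightarrow vs$, $y \leftrightarrow vw$, and $y'' \leftrightarrow vsw$ for some $v \in W$. The reflection $r := vsv^{-1}$ (a conjugate of $s$) acts on $\Sigma$ by left multiplication and satisfies $r \cdot v = vs$ and $r \cdot vw = vsv^{-1} \cdot vw = vsw$; hence $r$ swaps the two chambers of $P \cap \Sigma$ and those of $Q \cap \Sigma$.

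Finally I would invoke the Moufang/RGD structure of $\Delta(\mathcal{D})$: the reflection $r$ is the restriction to $\Sigma$ of the reflection $r_\alpha$ associated to the twin root $\alpha$ of $\Sigma$ separating $x$ from $x''$, and $r_\alpha$ lifts to a type-preserving automorphism of the full twin building. Since $r_\alpha$ sends $x \in P$ to $x'' \in P$, it stabilizes the $s$-panel $P$, and since it sends $y \in Q$ to $y'' \in Q$, it stabilizes $Q$. This yields the reflection claimed by the statement. The main obstacle I expect is this final step, i.e., promoting the apartment-level reflection to a global building automorphism stabilizing the thick panels $P$ and $Q$ rather than merely their thin traces $P \cap \Sigma$ and $Q \cap \Sigma$; this is precisely where the RGD/Moufang structure of this section is essential.
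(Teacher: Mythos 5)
Your argument takes a genuinely different route from the paper's. The paper invokes Theorem \ref{parallelPcomppath} to produce a $P$-compatible path $(Q_0,\ldots,Q_k=Q)$ with $Q_0$ opposite $P$, then inducts along it using the fact that opposite panels in a rank-$2$ residue are stabilized by the same reflection; this is entirely combinatorial and re-uses the machinery built earlier in the section. You instead work inside a single twin apartment $\Sigma$ containing $x$ and $\proj_Q x$ and show directly that $P\cap\Sigma$ and $Q\cap\Sigma$ lie on a common wall of $\Sigma$. That is a clean, direct alternative, and your apartment-level computation ($x\leftrightarrow v$, $x''\leftrightarrow vs$, $y\leftrightarrow vw$, $y''\leftrightarrow vsw$, with the wall reflection acting as claimed) is correct. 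Two points, however. First, the reflection you exhibit is $vsv^{-1}$, which equals $s$ only after you normalize the identification by choosing the basepoint $c_+:=x$ (so $v=1_W$); without that normalization you have proved that some conjugate of $s$ stabilizes both panels, not that $s$ does, so the choice should be made explicit. Second, your anticipated "main obstacle" — that one must invoke the Moufang/RGD structure to promote the apartment reflection to a global building automorphism — is a false alarm: the paper's proof makes no use of the RGD structure at all, and your own apartment-level computation already establishes the content of the statement (that $P$ and $Q$ lie on the same wall in the sense of the definition immediately preceding the lemma). The Moufang lift is harmless in this subsection's context but is not needed, and dropping it makes the argument match the paper's level of generality. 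A minor bookkeeping remark: the bijection $\proj^P_Q$ for panels in opposite halves comes from Lemma \ref{AB08Exercise5.168+projbijection+Mu97Lemma3.5}$(b)$, not from Lemma \ref{parallelresidues} (which is the one-sided version).
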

\begin{proof}
	By Theorem \ref{parallelPcomppath} there exists a $P$-compatible path $(Q_0, \ldots, Q_k = Q)$. Since $P$ and $Q_0$ are opposite, both panels are stabilized by the reflection $s$. The claim follows by induction and the fact, that opposite panels in a rank $2$ residue are stabilized by the same reflection.
\end{proof}

\begin{comment}
	\begin{lemma}\label{EinschraenkungWurzelgruppe}
	Let $\Delta$ be a Moufang twin building and let $R$ be a residue of $\Delta$. Let $\alpha_R$ be a root of $\Delta$ and let $\alpha$ be a root such that $\alpha_R = \alpha \cap R$ Let $V_{\alpha_R}$ be the root group of $R$. Then we have $V_{\alpha_R} = \{ g \vert_R \mid g\in U_{\alpha} \}$.
	\end{lemma}
	\begin{proof}
	Let $U_{\alpha} \vert_R := \{ g \vert_R \mid g\in U_{\alpha} \}$. First of all we notice that $U_{\alpha} \vert_R \subseteq V_{\alpha_R}$. Now let $h\in V_{\alpha_R}$. Since $\Delta$ is Moufang, we have $g\in U_{\alpha}$ such that $(g\vert_R)^{-1}h = \text{id}_{\C(P, \alpha)}$ for every boundary panel $P$. Since the action of $V_{\alpha_R}$ on $\C(P, \alpha)$ is free for every boundary panel $P$, we have $h = g\vert_R \in U_{\alpha} \vert_R$.
	\end{proof}
\end{comment}

\subsubsection*{A criterion for wall-connectedness}

\begin{comment}
	For $s\in S$ we put $W_s := \langle U_{\beta} \mid \beta \in \Phi_+, o(r_{\beta}s) < \infty \rangle$.
	
	\begin{lemma}
	We have $W_s \cong U_s \ltimes \langle U_{\alpha} \mid \alpha \in \Phi_+ \backslash \{ \alpha_s \}, o(r_{\alpha_s} r_{\alpha})<\infty \rangle$ for any $s\in S$.
	\end{lemma}
	\begin{proof}
	Let $\alpha \in \Phi_+ \backslash \{ \alpha_s \}$ such that $o(r_{\alpha_s} r_{\alpha}) < \infty$ and let $u_s \in U_{\alpha_s}, u_{\alpha} \in U_{\alpha}$. Then we have $u_{\alpha}^{u_s} = [ u_s, u_{\alpha}^{-1} ] u_{\alpha} \in N_s := \langle U_{\gamma} \mid \gamma \in \Phi_+ \backslash \{ \alpha_s \}, o(r_{\alpha_s} r_{\gamma}) < \infty \rangle$ and hence $\langle U_{\alpha} \mid \alpha \in \Phi_+ \backslash \{ \alpha_s \}, o(r_{\alpha_s} r_{\alpha})<\infty \rangle \trianglelefteq W_s$. Since $W_s$ acts on the twin buildings we obtain that $N_s$ acts trivially on the $s$-panel of $c_+$ and $U_s$ acts regular on it. Thus $U_s \cap N_s = \{ 1 \}$ and hence the claim follows.
	\end{proof}
\end{comment}

\begin{lemma}\label{Wstransitive}
	Let $s\in S, \epsilon \in \{+, -\}$, let $P := \P_s(c_{\epsilon}) \subseteq \C_{\epsilon}$ and let $P_0, \ldots, P_k \subseteq \C_{-\epsilon}$ be panels such that $(P_0, \ldots, P_k)$ is a $P$-compatible path. Then the group $\langle U_{\beta} \mid \beta \in \Phi_{\epsilon}, o(r_{\beta}s) < \infty \rangle$ acts transitively on the set of panels opposite $P_k$ in $R(P_{k-1}, P_k)$.
\end{lemma}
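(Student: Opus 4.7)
The plan is to exploit the Moufang structure of the rank $2$ spherical residue $R := R(P_{k-1}, P_k)$ and to realize the required transitivity via root groups of $R$ which arise as restrictions of root groups $U_\beta$ of $\Delta$ satisfying the conditions defining the subgroup in question.

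First, I would set up the geometric picture. By Proposition \ref{projectiontransitive}(a) applied to the $P$-compatible path $(P_0, \ldots, P_k)$, together with Lemma \ref{AB08Exercise5.168+projbijection+Mu97Lemma3.5}(b) and (d), both $P_{k-1}$ and $P_k$ are parallel to $P = \P_s(c_\epsilon)$. Then Lemma \ref{parallelpanelssamewall} says that the reflection $s$ stabilizes each of $P_{k-1}$ and $P_k$. Consequently $s$ lies in the setwise stabilizer of $R$ in $W$, which is conjugate to the finite parabolic $W_J$ (where $J$ is the type of $R$).

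Second, since $\Delta$ is Moufang and $R$ is a spherical residue, $R$ inherits a Moufang structure as a rank $2$ spherical building. Each root group of $R$ arises as the restriction $U_\beta|_R$ for a root $\beta \in \Phi$ of $\Delta$ such that $r_\beta$ restricts to a reflection of $R$. The Moufang property of $R$ guarantees that the root groups $V_\gamma = U_\beta|_R$ associated to roots $\gamma$ of $R$ with $P_k \subseteq \gamma$ (so that $V_\gamma$ fixes $P_k$ pointwise) jointly generate a subgroup of the stabilizer of $P_k$ that acts transitively on the set of panels of $R$ opposite $P_k$. Since $P_{k-1}$ is itself one such panel, this gives the desired transitivity once we show that each such $\beta$ lies inside $\langle U_\beta \mid \beta \in \Phi_\epsilon, o(r_\beta s) < \infty \rangle$.

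Third, I would verify the two defining conditions for each such $\beta$. For the finiteness condition, $r_\beta$ acts on $R$ as a reflection of $R$, hence $r_\beta \in \textup{Stab}_W(R)$; combined with $s \in \textup{Stab}_W(R)$ from the first step, the product $r_\beta s$ lies in the finite group $\textup{Stab}_W(R)$, so $o(r_\beta s) < \infty$. For the positivity $\beta \in \Phi_\epsilon$, I would argue that among the two roots of $\Delta$ restricting to $\pm \gamma$ on $R$, we may consistently pick the one containing $c_\epsilon$. The compatibility with the correct root group action is seen by using that $P = \P_s(c_\epsilon)$ and $P_k$ are parallel along the wall of $r_s$, allowing one to place $c_\epsilon$ on the same side of the wall of $r_\beta$ as $P_k$.

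The main obstacle is the positivity verification in the third step: checking that the sign of $\beta$ dictated by the needed root-group action on $R$ coincides with the sign that makes $c_\epsilon \in \beta$. This requires a careful tracking of the wall of $r_\beta$ through a twin apartment containing $c_\epsilon$ and $P_{k-1}$, using the parallelism of $P, P_{k-1}, P_k$ (all contained in the wall of $r_s$) to relate the half-spaces defined by $r_\beta$ in $R$ and in the twin apartment.
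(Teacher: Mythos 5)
Your proposal is, in substance, the same strategy the paper uses: realize the required transitivity via root groups $U_\beta$ whose walls pass through $R := R(P_{k-1},P_k)$, with the finiteness $o(r_\beta s)<\infty$ coming from the fact that $s$ also has its wall through $R$ (via Lemma~\ref{parallelpanelssamewall}), and with the positivity $\beta\in\Phi_\epsilon$ as the delicate point. The organizational difference is that you phrase the transitivity at the panel level through the Moufang polygon structure of $R$, while the paper works at the chamber level: it reduces to moving a single chamber $y\in Q$ (opposite $c_0:=\proj_{P_k}c_\epsilon$ in $R$) to $x\in P_{k-1}$ (also opposite $c_0$), fixes minimal galleries from $c_0$ to $x$ and from $c_0$ to $y$, and inducts on the length of their common prefix, at each step applying the Moufang property of $\Delta$ itself (not of the residue $R$) to a single twin root $\beta$ separating $c_i$ from $c_{i+1}$. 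This avoids having to invoke that the root groups of $R$ as a Moufang polygon are exactly the restrictions of root groups of $\Delta$, which your argument tacitly requires; that fact is true but is an extra ingredient you did not justify.

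The real issue, however, is the one you flag yourself: the positivity $\beta\in\Phi_\epsilon$ is not verified but left as a "plan". In your setup you would need, for each root $\gamma$ of $R$ with $P_k\subseteq\gamma$, that the lift $\beta$ of $\gamma$ with $c_\epsilon\in\beta$ also satisfies $P_k\subseteq\beta\cap R$, and you only sketch how one might track this through a twin apartment. This is precisely the crux, and it is where the paper's gallery device pays off: by starting every gallery at $c_0=\proj_{P_k}c_\epsilon=\proj_R c_\epsilon$, the codistance $\ell_*(c_\epsilon,\cdot)$ strictly decreases along the gallery, so for the twin root $\beta$ with $c_i\in\beta,\ c_{i+1}\notin\beta$ one automatically has $c_\epsilon\in\beta$. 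Your approach would also work once this is carried out (one checks that $c_0\in\gamma$ forces the correct sign for $\beta$, since a minimal gallery from $c_0$ to a chamber across the wall of $\gamma$ crosses that wall exactly once), but as written the positivity step is a genuine gap: you name the difficulty correctly but do not close it, whereas the paper's choice of base chamber $\proj_{P_k}c_\epsilon$ closes it in one line.
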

\begin{proof}
	For $s\in S$ we abbreviate $W_s := \langle U_{\beta} \mid \beta \in \Phi_{\epsilon}, o(r_{\beta}s) < \infty \rangle$. Since $P_{k-1}, P_k$ are opposite in $R(P_{k-1}, P_k)$, it suffices to show that for any panel $Q \subseteq R(P_{k-1}, P_k)$ which is opposite to $P_k$ in $R(P_{k-1}, P_k)$ there exists $g\in W_s$ such that $g.Q = P_{k-1}$. Let $Q$ be such a panel. Then there exists $y\in Q$ such that $\proj_{P_k} c_{\epsilon}, y$ are opposite in $R(P_{k-1}, P_k)$. Let $x\in P_{k-1}$ be opposite to $\proj_{P_k} c_{\epsilon}$ in $R(P_{k-1}, P_k)$. We will show that there exists $g\in W_s$ such that $g.y = x$. Let $(c_0 = \proj_{P_k} c_{\epsilon}, \ldots, c_k = x)$ and $(d_0 = \proj_{P_k} c_{\epsilon}, \ldots, d_k = y)$ be minimal galleries and let $i = \max\{ 0 \leq j \leq k \mid \forall 0 \leq k \leq j: c_k = d_k \}$. We will show the hypothesis by induction on $k-i$. If $k-i = 0$ there is nothing to show. Now let $k-i >0$. Let $\beta$ be the twin root such that $c_i \in \beta, c_{i+1} \notin \beta$. Then $c_{\epsilon} \in \beta$. Since the twin building is a Moufang twin building, there exists $g\in U_{\beta}$ such that $g.d_{i+1} = c_{i+1}$ (cf. \cite[Example $8.47$]{AB08}). Since $o(r_{\beta} s) < \infty$ by Lemma \ref{parallelpanelssamewall} we have $g\in W_s$. By induction we obtain $h\in W_s$ such that $hg.y = h.(g.y) = x$. This finishes the claim.
\end{proof}

\begin{theorem}\label{Theorem1}
	Let $\mathcal{D} = \left( G, \left( U_{\alpha} \right)_{\alpha \in \Phi}\right)$ be an RGD-system of type $(W, S)$ and let $(\epsilon, s) \in \{+, -\} \times S$. Then the following are equivalent:
	\begin{enumerate}[label=(\roman*)]
		\item $U_{\epsilon} = \langle U_{\beta} \mid \beta \in \Phi_{\epsilon}, o(r_{\beta}s) < \infty \rangle$;
		\item $\Gamma_s(c_{\epsilon})$ is connected.
	\end{enumerate}
\end{theorem}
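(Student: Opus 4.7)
The strategy is to interpret both conditions in terms of the $U_\epsilon$-action on $V(\Gamma_s(c_\epsilon))$ and its subgroup $W_s := \langle U_\beta \mid \beta \in \Phi_\epsilon,\ o(r_\beta s)<\infty\rangle$. By Lemma \ref{AB08Cor8.32}, $U_\epsilon$ acts simply transitively on $c_\epsilon^{\mathrm{op}}$; using Lemma \ref{Lemadjacent}, every vertex of $\Gamma_s(c_\epsilon)$ has the form $\P_s(y)$ for some $y \in c_\epsilon^{\mathrm{op}}$, so $u \mapsto u.\P_s(c_{-\epsilon})$ induces a transitive action of $U_\epsilon$ on $V(\Gamma_s(c_\epsilon))$, whose stabilizer $K := \mathrm{Stab}_{U_\epsilon}(\P_s(c_{-\epsilon}))$ coincides with the simple root group $U_{\alpha_s}$ by standard Moufang theory and is therefore contained in $W_s$ (since $o(r_{\alpha_s}s) = 1 < \infty$). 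Moreover $U_\epsilon$, and hence $W_s$, fixes $\P_s(c_\epsilon)$ and acts by isometries preserving $\P_s(c_\epsilon)$-compatible paths, so it acts on $\Gamma_s(c_\epsilon)$ by graph automorphisms.

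For (ii) $\Rightarrow$ (i), I would prove by induction on the length $k$ of the common $\P_s(c_\epsilon)$-compatible paths witnessing wall-adjacency that wall-adjacent vertices lie in the same $W_s$-orbit. The case $k = 0$ is immediate. For $k \geq 1$, given paths $(P_0 = Q_1, \ldots, P_k = T)$ and $(P_0' = Q_2, \ldots, P_k' = T)$ of the same type, the two terminal residues $R(P_{k-1}, T)$ and $R(P_{k-1}', T)$ coincide (same type, both containing $T$), so Lemma \ref{Wstransitive} provides $g \in W_s$ with $g.P_{k-1} = P_{k-1}'$. Because $g$ fixes $\P_s(c_\epsilon)$ and preserves this residue, it fixes $T = \proj_{R(P_{k-1}, T)}\P_s(c_\epsilon)$, and applying $g$ to the first path yields (after truncating the last edge) a length-$(k-1)$ instance ending at $P_{k-1}'$; by induction some element of $W_s$ sends $g.Q_1$ to $Q_2$. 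Chaining wall-adjacencies along any path in the connected graph shows $W_s$ acts transitively on $V(\Gamma_s(c_\epsilon))$, and combined with $K \subseteq W_s$ this forces $U_\epsilon = W_s K = W_s$.

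For (i) $\Rightarrow$ (ii), let $C$ be the connected component of $\P_s(c_{-\epsilon})$ in $\Gamma_s(c_\epsilon)$ and set $H := \{u \in U_\epsilon : u.\P_s(c_{-\epsilon}) \in C\}$. Since $U_\epsilon$ acts by graph automorphisms, pushing and pulling paths along $u^{\pm 1}$ shows that $H$ is a subgroup of $U_\epsilon$; by (i) it therefore suffices to verify $U_\beta \subseteq H$ for every $\beta \in \Phi_\epsilon$ with $o(r_\beta s) < \infty$. This is the main obstacle. For $u \in U_\beta$, my approach would be to produce a panel $T \subseteq \C_{-\epsilon}$ that is parallel to $\P_s(c_\epsilon)$ and fixed pointwise by $u$: then by Theorem \ref{parallelPcomppath} there exists a $\P_s(c_\epsilon)$-compatible path from $\P_s(c_{-\epsilon})$ to $T$, and applying the isometry $u$, which fixes both $\P_s(c_\epsilon)$ and $T$, yields a $\P_s(c_\epsilon)$-compatible path from $u.\P_s(c_{-\epsilon})$ to $T$ of the same length and type, directly exhibiting wall-adjacency. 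To construct such a $T$, one exploits that $U_\beta$ fixes the half-apartment associated with $\beta$ pointwise, while $o(r_\beta s) < \infty$ forces the walls of $\beta$ and of $s$ to cross inside a twin apartment containing $c_\epsilon$ and $c_{-\epsilon}$: a panel on the $s$-wall lying entirely on the $\beta$-side is $s$-stabilized by Lemma \ref{parallelpanelssamewall}, and its parallelism to $\P_s(c_\epsilon)$ can be verified via the codistance calculus of Lemma \ref{GeneralizationDMVM11Lemma14} inside the twin apartment.
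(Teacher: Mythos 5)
Your (ii)$\Rightarrow$(i) argument is essentially the paper's, just phrased via the orbit--stabilizer relation on $V(\Gamma_s(c_\epsilon))$ instead of directly on $c_\epsilon^{\mathrm{op}}$; the induction on path length using Lemma \ref{Wstransitive}, the fact that $g$ fixes $T$, and the absorption of the stabilizer into $W_s$ via $U_{\alpha_s}$ (or $U_{-\alpha_s}$ when $\epsilon = -$) all match. One cosmetic slip: the identification $V(\Gamma_s(c_\epsilon)) = \{\P_s(y) : y \in c_\epsilon^{\mathrm{op}}\}$ follows from (Tw3)/Lemma \ref{AB08Lemma5.139}, not from Lemma \ref{Lemadjacent}.

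The (i)$\Rightarrow$(ii) direction has a real gap. You invoke Theorem \ref{parallelPcomppath} to obtain ``a $\P_s(c_\epsilon)$-compatible path from $\P_s(c_{-\epsilon})$ to $T$,'' but that theorem only yields a $\P_s(c_\epsilon)$-compatible path $(Q_0,\dots,Q_k=T)$ whose initial panel $Q_0$ is \emph{some} panel opposite $\P_s(c_\epsilon)$, with no control over which one. If $Q_0 \neq \P_s(c_{-\epsilon})$, applying $u$ only shows $Q_0$ and $u.Q_0$ are wall-adjacent, which does not place $u$ in your subgroup $H$ (it would only place a conjugate $w^{-1}uw$ in $H$). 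What is actually needed is a compatible path anchored at $\P_s(c_{-\epsilon})$: the paper gets this from Lemma \ref{DMVMLemma19} inside $\Delta_{-\epsilon}$ (giving a compatible path $(\P_s(c_{-\epsilon}),\dots,T)$ between two panels on the $s$-wall), and then promotes it to a $\P_s(c_\epsilon)$-compatible path via Theorem \ref{P-compatiblepath}, whose codistance hypothesis $\ell(\delta(\P_s(c_{-\epsilon}),T))+1 = \ell_*(c_\epsilon,\proj_T c_\epsilon)$ must be verified from the specific placement of $T$ in the twin apartment $A(c_+,c_-)$. Your sketch of constructing $T$ on the $s$-wall on the $\beta$-side is the right geometric picture, and such a $T$ does satisfy the required codistance identity provided the near chamber of $T$ lies in $\alpha_s$, but this verification is exactly what is missing; Lemma \ref{parallelpanelssamewall} is also irrelevant at the point you cite it (being on the $s$-wall is the \emph{hypothesis}, not a consequence). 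So the idea is the same as the paper's, but the path-anchoring step and the codistance check are not supplied.
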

\begin{proof}
	Again, we abbreviate $W_s := \langle U_{\beta} \mid \beta \in \Phi_{\epsilon}, o(r_{\beta}s) < \infty \rangle$. At first we assume that $\Gamma_s(c_{\epsilon})$ is connected. Let $x, y \in c_{\epsilon}^{\mathrm{op}}$ such that $\P_s(x), \P_s(y)$ are wall-adjacent of type $(c_{\epsilon}, s)$. It suffices to show that there exists $g\in W_s$ such that $g.x = y$ (the general case follows by induction and Lemma \ref{AB08Cor8.32} applied to $y = h.x$ for $h \in U_{\epsilon}$). Let $(P_0 = \P_s(x), \ldots, P_k = T)$ and $(Q_0 = \P_s(y), \ldots Q_k = T)$ be $\P_s(c_{\epsilon})$-compatible paths of the same length and type. We show the hypothesis via induction on $k$. If $k=0$ we have $\P_s(x) = \P_s(y)$ and therefore $\delta_{-\epsilon}(x, y) \in \langle s \rangle$. Then there exists $g\in U_{\alpha_s}$ ($\alpha_s$ is the twin root containing $c_{\epsilon}$ but not any $s$-adjacent chamber) with $g.x = y$. Now let $k>0$. By Lemma \ref{Wstransitive} there exists $g\in W_s$ such that $g.P_{k-1} = Q_{k-1}$. We obtain the $\P_s(c_{\epsilon})$-compatible paths $(g.P_0, \ldots, g.P_{k-1} = Q_{k-1})$ and $(Q_0, \ldots, Q_{k-1})$. Using induction we obtain $h\in W_s$ such that $hg.x = h.(g.x) = y$.
	
	Now we assume that $U_{\epsilon} = W_s$. Let $\beta \in \Phi_{\epsilon}$ such that $o(r_{\beta}s)<\infty$ and let $g \in U_{\beta}$. Since $o(r_{\beta}s) < \infty$, there exists $1 \leq k\in \NN$ such that $(r_{\beta}s)^k =1$. Then $(r_{\beta}s)^{k-1}r_{\beta} = s$ and hence we have $v^{-1} tv = s$ for some $v\in W$ and $t\in \{r_{\beta}, s\}$ Since $r_{\beta}$ is a reflection, we have $r_{\beta} = w^{-1}uw$ for some $w\in W$ and $u\in S$. In particular, we have $v^{-1}sv = r$ for some $v\in W, r\in S$. Note that $(sv)^{-1}s(sv) = r$. Thus let $v' \in \{ v, sv \}$ be such that $\ell(sv') = \ell(v')+1$. Let $z\in A_{-\epsilon}(c_+, c_-)$ be such that $\delta_{-\epsilon}(c_{-\epsilon}, z) = v'$. Then $\P_s(c_{-\epsilon})$ and $\P_r(z)$ are parallel by Lemma \ref{DMVMLemma14}. Since $\ell(v'r) = \ell(sv') = \ell(v') +1$, we deduce $z = \proj_{\P_r(z)} c_{-\epsilon}$. By Lemma \ref{DMVMLemma19} there exists a compatible path $(P_0 = \P_s(c_{-\epsilon}), \ldots, P_n = \P_r(z))$. This compatible path is $\P_s(c_{\epsilon})$-compatible by Theorem \ref{P-compatiblepath}. Since $\delta(\P_s(c_{-\epsilon}), \P_r(z)) = \delta(\P_s(g.c_{-\epsilon}), \P_r(z))$ we have also a $\P_s(c_{\epsilon})$-compatible path $(Q_0 = \P_s(g.x), \ldots, Q_n = \P_r(z))$ by Lemma \ref{DMVM11Lemma26} of the same length and type. Hence $\P_s(c_{-\epsilon})$ and $\P_s(g.c_{-\epsilon})$ are wall-adjacent of type $(c_{\epsilon}, s)$. Using induction the claim follows from Lemma \ref{AB08Cor8.32}.
\end{proof}

\begin{corollary}\label{Corollary: D wc iff Delta(D) wc}
	Let $\mathcal{D}$ be an RGD-system of type $(W, S)$. Then the following are equivalent:
	\begin{enumerate}[label=(\roman*)]
		\item $\mathcal{D}$ is wall-connected.
		
		\item $\Delta(\mathcal{D})$ is wall-connected.
	\end{enumerate}
\end{corollary}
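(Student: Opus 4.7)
The plan is to reduce the corollary to Theorem \ref{Theorem1} by transporting connectedness of the graphs $\Gamma_s(c_{\epsilon})$ along the natural action of $G$ on $\Delta(\mathcal{D})$.

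One direction is essentially a restatement: if $\Delta(\mathcal{D})$ is wall-connected then, in particular, $\Gamma_s(c_{\epsilon})$ is connected for every $(\epsilon, s) \in \{+,-\} \times S$, and Theorem \ref{Theorem1} applied at $(c_{\epsilon}, s)$ then yields $U_{\epsilon} = \langle U_{\beta} \mid \beta \in \Phi_{\epsilon},\, o(r_{\beta}s) < \infty \rangle$; that is, condition (wc) holds for every $(\epsilon, s)$, so $\mathcal{D}$ is wall-connected.

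For the converse, I would assume $\mathcal{D}$ is wall-connected. Theorem \ref{Theorem1} then provides, for each $(\epsilon, s) \in \{+,-\} \times S$, connectedness of $\Gamma_s(c_{\epsilon})$, and it remains to propagate this to every chamber. The key input is the standard fact that for an RGD-system the group $G$ acts chamber-transitively on each half $\C_{\epsilon}$ of $\Delta(\mathcal{D})$ (cf. the construction in \cite[Section 8.9]{AB08}), by isometries of the twin building. Given $c \in \C_{\epsilon}$, choose $g \in G$ with $g.c_{\epsilon} = c$. Then $g.\P_s(c_{\epsilon}) = \P_s(c)$, and $g$ induces a bijection between panels opposite $\P_s(c_{\epsilon})$ and panels opposite $\P_s(c)$.

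Since opposition, projections and rank-$2$ residues are preserved by isometries, the image under $g$ of any $\P_s(c_{\epsilon})$-compatible path is a $\P_s(c)$-compatible path of the same length and type; this is essentially the content of Lemma \ref{Lemma:isometrymapsPcompatiblepathontoPcompatiblepath} applied to the isometry induced by $g$ (and only the definitions are used, so the $2$-sphericity hypothesis appearing there plays no role). Consequently $g$ realises a graph isomorphism $\Gamma_s(c_{\epsilon}) \to \Gamma_s(c)$, and connectedness is transferred. The same argument applied on the other half settles the remaining case, so $\Gamma_s(c)$ is connected for every $(c, s) \in \C \times S$ and $\Delta(\mathcal{D})$ is wall-connected.

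The only genuinely non-formal step is the appeal to chamber-transitivity of $G$ on each half of $\Delta(\mathcal{D})$; everything else is bookkeeping around Theorem \ref{Theorem1} and the isometry-invariance of the notions in play.
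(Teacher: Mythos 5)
Your proof is correct and takes essentially the same route as the paper: one direction is immediate from Theorem \ref{Theorem1}, and the other uses the chamber-transitivity of $G$ to transport the connectedness of $\Gamma_s(c_{\epsilon})$ to $\Gamma_s(c)$ for arbitrary $c$. The paper packages the transport step as an appeal to Lemma \ref{Lemma:Wall-connected survives under isometries} rather than unfolding Lemma \ref{Lemma:isometrymapsPcompatiblepathontoPcompatiblepath} directly, but the substance is identical; your parenthetical remark that the $2$-sphericity hypothesis in that lemma is not actually used in its proof is a reasonable observation.
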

\begin{proof}
	Using the fact that $G$ acts transitive on the set of chambers in one half, the claim follows from Lemma \ref{Lemma:Wall-connected survives under isometries} and the previous theorem.
\end{proof}

\subsubsection*{A result about Moufang polygons}

We need a result about Moufang polygons. The proof communicated to us by Richard Weiss relies on the basic theory of Moufang polygons as developed in the first chapters of \cite{TW02}. In this subsection we use the notation of \cite{TW02}.

For $i < k < j, a_i \in U_i, a_j \in U_j$ there exists $a_l \in U_l$, $i < l < j$, such that $[a_i, a_j] = a_{i+1} \cdots a_{j-1}$. We define $[a_i, a_j]_k := a_k$ as well as $[U_i, U_j]_k := \{ [a_i, a_j]_k \mid a_i \in U_i, a_j \in U_j \}$.

\begin{proposition}\label{Wurzelgruppen}
	For each $i+1 \leq k \leq i+n-2$ we have $[U_i, U_{i+n-1}]_k = U_k$.
\end{proposition}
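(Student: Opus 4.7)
My plan is to exploit the commutator identities of a Moufang $n$-gon, together with the $\mu$-map machinery of \cite{TW02}, to show that for each intermediate index $k$ the ``$k$-th component map'' $(a,b) \mapsto [a,b]_k$ on $U_i \times U_{i+n-1}$ is surjective onto $U_k$. First let me unwind the setup. In a Moufang $n$-gon the root groups attached to one root form a sequence $U_1, \ldots, U_n$, and for any $j < l$ with $l - j < n$ we have $[U_j, U_l] \subseteq U_{j+1} \cdots U_{l-1}$; moreover the product map $U_{j+1} \times \cdots \times U_{l-1} \to U_{j+1} \cdots U_{l-1}$ is a bijection. This gives each $[a,b]$ with $a \in U_i, b \in U_{i+n-1}$ a unique factorisation $a_{i+1} \cdots a_{i+n-2}$, which defines $[a,b]_k := a_k \in U_k$.

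The first step of the proof is to record how $[a,b]_k$ behaves under multiplication in either slot. The standard identity $[a a', b] = [a,b]^{a'} [a',b]$, combined with the ``short'' commutator relations $[U_i, U_l] \subseteq U_{i+1} \cdots U_{l-1}$, shows that conjugation by $a' \in U_i$ modifies an element of $U_{i+1} \cdots U_{i+n-2}$ only by factors sitting in $U_{i+2} \cdots U_{i+n-2}$. Reading the $k$-th component gives an inductive formula of the form $[a a', b]_k = [a,b]_k \cdot [a',b]_k \cdot (\text{correction involving $[\cdot,\cdot]_l$ for $l<k$})$, and symmetrically for the second argument. In particular, after quotienting out the image $A_l = \{[a,b]_l : a \in U_i, b \in U_{i+n-1}\}$ for $l < k$, the $k$-th component becomes biadditive in $(a,b)$, so the image $A_k \subseteq U_k$ is a subgroup.

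The second step is to bring in the $\mu$-maps: for each non-trivial $u \in U_j^*$ the element $\mu(u) \in U_{-j} u U_{-j}$ acts by conjugation as the Weyl reflection $s_j$ on the indices of the root groups. Conjugating the long commutator by $\mu$-elements attached to $U_i$, $U_{i+n-1}$ and the intermediate $U_k$ transports the whole commutator relation to itself, permuting the component indices; this shows that each $A_k$ is normalised by the $\mu$-maps and hence by a subgroup of the full Moufang group that acts transitively on $U_k \setminus \{1\}$. Combined with step one, this forces $A_k = U_k$ as soon as $A_k \neq \{1\}$, and nontriviality of $A_k$ is guaranteed by the thickness and non-degeneracy of the polygon (if $A_k$ were trivial then the commutator relation would degenerate, contradicting the Moufang classification).

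The main obstacle is the bookkeeping in step one: the ``lower-order corrections'' make the recursion on $k$ rather intricate, and one must be careful about whether one inducts on $k - i$ or on $i + n - 1 - k$, since the correction terms go in different directions in the two slots. In the worst case the cleanest route is to appeal to the Moufang polygon classification \cite[Chapters 16-19]{TW02}: in each type (Moufang triangles over alternative division rings; the eight families of Moufang quadrangles; Moufang hexagons over cubic Jordan algebras; Ree-Tits octagons) the commutator maps $[a,b]_k$ are given by explicit algebraic expressions, and the surjectivity of each component reduces to a direct algebraic fact (surjectivity of the multiplication in a division ring, non-degeneracy of a form, surjectivity of the Jordan cross product, etc.), which one verifies type by type.
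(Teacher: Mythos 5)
Your proposal diverges substantially from the paper's argument, and the route you sketch has a genuine gap.

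The paper's proof is short and direct, built on two specific facts from \cite{TW02}. For the base case $k = i+1$, it invokes $(6.4)(\mathrm{i})$, which gives the first component of the commutator explicitly as $[a_i, a_{i+n-1}^{-1}]_{i+1} = a_{i+n-1}^{\mu(a_i)}$; since by $(6.1)$ conjugation by $\mu(a_i)$ is a \emph{bijection} from $U_{i+n-1}$ onto $U_{i+1}$, surjectivity onto $U_{i+1}$ is immediate with no subgroup or transitivity considerations needed. For $k-i > 1$ the paper uses the shift identity $(6.4)(\mathrm{iii})$, which says $[U_i, U_{i+n-1}]_k = [U_{i+1}, U_{i+n}]_k$ for all $i+2 \le k \le i+n-2$, so induction on $k-i$ finishes. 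In other words, the first component is \emph{already} a single $\mu$-twist of the top root group, and the remaining components are reduced to first components by reindexing. None of your three steps is invoked.

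The gap in your approach is in Step 2. You assert that the $\mu$-elements normalise $A_k$ ``and hence \ldots a subgroup of the full Moufang group that acts transitively on $U_k \setminus \{1\}$.'' The group generated by the $\mu$-elements together with the torus $H$ does \emph{not}, in general, act transitively on $U_k \setminus \{1\}$. This already fails for many Moufang quadrangles: the root groups there carry the structure of a quadratic space, a pseudo-quadratic module, etc., and the induced $H$-action preserves that structure, so the orbits on $U_k \setminus \{1\}$ are typically level sets of a form rather than all of $U_k \setminus \{1\}$. So the conclusion ``$A_k$ is a $\mu$-invariant subgroup, hence all of $U_k$ or trivial'' does not follow. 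The nontriviality argument in Step 3 is also not a proof as stated: ``if $A_k$ were trivial the commutator relation would degenerate, contradicting the classification'' is circular unless you actually appeal to the case-by-case description, which is precisely your fallback. Step 1 (biadditivity of the $k$-th component modulo lower components) is essentially correct and is a standard computation, but without transitivity it does not close the argument.

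Your classification fallback would work, but it is a far heavier hammer: one has to verify component surjectivity in each of the families of Moufang triangles, quadrangles, hexagons and octagons. The paper's argument via $(6.1)$ and $(6.4)$ of \cite{TW02} avoids this entirely, works uniformly for all $n$-gons, and is the intended proof. If you want to salvage your Step 2, you would need to replace ``transitive on $U_k \setminus \{1\}$'' by something that actually holds; but at that point you have essentially rediscovered the content of $(6.4)(\mathrm{i})$, so the paper's route is the one to take.
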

\begin{proof}
	By definition it suffices to show that $U_k \subseteq [U_i, U_{i+n-1}]_k$. We notice that \cite[$(6.4)$]{TW02} is also correct if we shift the indices. We will show the hypothesis by induction on $k-i$. Let $k-i = 1$ and let $a_{i+1} \in U_{i+1}$. By \cite[$(6.1)$]{TW02} we have $U_{i+n-1}^{\mu(a_i)} = U_{i+1}$, where $\mu$ is the mapping defined in \cite[$(6.1)$]{TW02}. Thus there exists $a_{i+n-1} \in U_{i+n-1}$ such that $a_{i+1} = a_{i+n-1}^{\mu(a_i)}$. For each $i < j < i+n-1$ let $b_j \in U_j$ such that $[a_i, a_{i+n-1}^{-1}] = b_{i+1} \cdots b_{i+n-2}$. By \cite[$(6.4)(i)$]{TW02} we have $b_{i+1} = a_{i+n-1}^{\mu(a_i)} = a_{i+1}$ and therefore $[a_i, a_{i+n-1}]_{i+1} = a_{i+1}$. Now let $k-i>1$. Using \cite[$(6.4)(iii)$]{TW02} we obtain $[U_i, U_{i+n-1}]_k = [U_{i+1}, U_{i+n}]_k$ for each $i+2 \leq k \leq i+n-2$. Using induction the claim follows.
	%	By definition it suffices to show that $U_k \subseteq [U_i, U_{i+n-1}]_k$. We notice that \cite[$(6.4)$]{TW02} is also correct if we shift the indices. We will show the hypothesis by induction on $k-i$. Let $k-i = 1$ and let $a_{i+1} \in U_{i+1}$. By \cite[$(6.1)$]{TW02} we have $U_{i+n-1}^{\mu(a_i)} = U_{i+1}$, where $\mu$ is the mapping defined in \cite[$(6.1)$]{TW02}. Thus there exists $a_{i+n-1} \in U_{i+n-1}$ such that $a_{i+1} = a_{i+n-1}^{\mu(a_i)}$. For each $i < j < i+n-1$ let $b_j \in U_j$ such that $[a_i, a_{i+n-1}^{-1}] = b_{i+1} \cdots b_{i+n-2}$. By \cite[$(6.4)$]{TW02} we have $b_{i+1} = a_{i+n-1}^{\mu(a_i)} = a_{i+1}$ and therefore $[a_i, a_{i+n-1}]_{i+1} = a_{i+1}$. Using \cite[$(6.3)$ and $(6.4)$]{TW02} we obtain $[U_i, U_{i+n-1}]_k = [U_{i+1}, U_{i+n}]_k$ for each $i+2 \leq k \leq i+n-2$. This finishes the claim.
	%Also the mappings $\lambda, \kappa$ defined in $(6.1)$ of \cite{TW02} are surjective and we have $\kappa(\lambda(a_i)) = a_i$ (cf. $(6.3)$ in \cite{TW02}).
\end{proof}

\begin{corollary}
	Let $i+1 \leq k \leq i+n-2$. Then $U_k \leq \langle U_i, U_{i+1}, \ldots, U_{k-1}, U_{k+1}, \ldots, U_{i+n-1} \rangle$.
\end{corollary}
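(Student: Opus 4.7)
The plan is to deduce the corollary directly from Proposition \ref{Wurzelgruppen} by unpacking the definition of $[U_i,U_{i+n-1}]_k$. Fix an element $u_k \in U_k$. By the proposition, $U_k = [U_i,U_{i+n-1}]_k$, so there exist $a_i \in U_i$ and $a_{i+n-1} \in U_{i+n-1}$ together with elements $a_l \in U_l$ for each $i < l < i+n-1$ such that
\[
[a_i,a_{i+n-1}] \;=\; a_{i+1}a_{i+2}\cdots a_{i+n-2},
\]
and moreover $a_k = u_k$.

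Next I would solve this identity for $a_k$. Rearranging gives
\[
u_k \;=\; a_k \;=\; \bigl(a_{i+1}\cdots a_{k-1}\bigr)^{-1}\bigl[a_i,a_{i+n-1}\bigr]\bigl(a_{k+1}\cdots a_{i+n-2}\bigr)^{-1}.
\]
The commutator $[a_i,a_{i+n-1}] = a_i^{-1}a_{i+n-1}^{-1}a_i a_{i+n-1}$ lies in $\langle U_i, U_{i+n-1}\rangle$, and every factor $a_l$ appearing on the right with $l \neq k$ lies in one of the groups $U_{i+1},\ldots,U_{k-1},U_{k+1},\ldots,U_{i+n-2}$. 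Thus $u_k$ is a product of elements lying in $U_i \cup U_{i+1} \cup \cdots \cup U_{k-1} \cup U_{k+1} \cup \cdots \cup U_{i+n-1}$, which shows
\[
u_k \;\in\; \langle U_i,U_{i+1},\ldots,U_{k-1},U_{k+1},\ldots,U_{i+n-1}\rangle.
\]

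Since $u_k$ was arbitrary, the desired inclusion follows. There is no real obstacle here: the work is entirely carried by Proposition \ref{Wurzelgruppen}, and the corollary amounts only to rearranging the commutator relation so as to isolate $a_k$. I would keep the argument to a few lines, explicitly writing out the rearrangement so the reader sees that the commutator contributes the missing indices $i$ and $i+n-1$ while the remaining letters in the product lie in the groups indexed by $\{i+1,\ldots,i+n-2\}\setminus\{k\}$.
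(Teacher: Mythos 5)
Your proof is correct and is exactly the argument the paper has in mind: the paper simply says the corollary is ``a direct consequence of the previous proposition,'' and your rearrangement of the commutator identity to isolate $a_k$ is the obvious unpacking of that claim.
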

\begin{proof}
	This is a direct consequence of the previous proposition.
\end{proof}

\subsection*{Affine twin buildings of rank $3$}

\begin{proposition}\label{affinerank3}
	Let $\mathcal{D} = \left( G, (U_{\alpha})_{\alpha \in \Phi} \right)$ be an RGD-system of irreducible affine type and of rank $3$. Then $\mathcal{D}$ is wall-connected.
\end{proposition}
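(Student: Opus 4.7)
The plan is to verify condition (wc) directly for $\mathcal{D}$, namely that for every $\epsilon \in \{+,-\}$ and every $s \in S$,
\[
U_\epsilon \;=\; V_s \;:=\; \langle U_\beta \mid \beta \in \Phi_\epsilon,\ o(r_\beta s)<\infty \rangle.
\]
Since $U_\epsilon$ is generated by its root groups, this reduces to showing $U_\alpha \leq V_s$ for every $\alpha \in \Phi_\epsilon$, and the case $o(r_\alpha s)<\infty$ is tautological. So the only content is the case where the walls of $r_\alpha$ and $s$ are parallel in the affine Coxeter complex $\Sigma$, which I would handle using the Moufang polygon identity from the Corollary to Proposition~\ref{Wurzelgruppen} inside a carefully chosen rank~$2$ residue $R \subseteq \mathcal{C}_\epsilon$.

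Concretely, I would pick a vertex $v$ of $\Sigma$ on the wall of $\alpha$ whose rank~$2$ residue $R$ has type $\{t,u\}$ with $n := m_{tu} \geq 3$. Such a $v$ exists on every wall of $\Sigma$ in each of the three irreducible affine rank-$3$ diagrams $\tilde{A}_2,\tilde{C}_2,\tilde{G}_2$: in $\tilde{A}_2$ this is automatic, and in $\tilde{C}_2$ and $\tilde{G}_2$ a direct case inspection shows that every wall of $\Sigma$ meets some vertex whose type has $m \geq 3$. At such a $v$ the $n$ walls through $v$ lie in $n$ pairwise distinct directions, and by the parallelism assumption only the wall of $\alpha$ itself is parallel to the wall of $s$; the remaining $n-1$ walls carry positive roots $\beta$ with $o(r_\beta s)<\infty$, so their root groups lie in $V_s$.

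Setting the base chamber of $R$ to be $c := \proj_R c_\epsilon$ makes the $n$ positive root groups $U_1,\dots,U_n$ of the Moufang $n$-gon $R$ (those whose roots contain $c$) coincide with the $n$ ambient-positive root groups whose walls pass through $v$. Writing $\alpha = U_k$ in this cyclic indexing, I arrange $2 \leq k \leq n-1$ by sliding $v$ along the wall of $\alpha$ if necessary: the position $k$ is controlled by the sector of $R \cap \Sigma$ in which $\proj_R c_\epsilon$ lies, and this shifts as $v$ moves. The Corollary to Proposition~\ref{Wurzelgruppen} with $i = 1$ then delivers
\[
U_\alpha \;=\; U_k \;\leq\; \langle U_1,\dots,U_{k-1},U_{k+1},\dots,U_n \rangle \;\leq\; V_s,
\]
as required.

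The main obstacle will be this last geometric step: showing that the cyclic position $k$ can always be brought into the middle range $\{2,\dots,n-1\}$ by choosing the vertex $v$ on the wall of $\alpha$ appropriately. This needs a case-by-case analysis of the tessellations of the apartments of $\tilde{A}_2,\tilde{C}_2,\tilde{G}_2$, and is the real geometric content of the proposition; once it is secured, the Moufang polygon identity of Proposition~\ref{Wurzelgruppen} finishes the argument.
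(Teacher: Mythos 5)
Your strategy is the same as the paper's: reduce $(\mathrm{wc})$ to showing $U_\gamma \leq V_s$ for each positive root $\gamma$ whose wall is parallel to the wall of $s$, find a rank-$2$ residue $R$ of gonality $n \geq 3$ with vertex $v$ on the wall of $\gamma$ in which the other $n-1$ positive root groups already lie in $V_s$, identify the ambient root groups at $v$ with the positive root groups of the Moufang $n$-gon $R$, and invoke the Corollary to Proposition~\ref{Wurzelgruppen}. The step you correctly flag as the obstacle --- forcing the index of $\gamma$ into the middle range $\{2,\ldots,n-1\}$ of the cyclic order determined by $\proj_R c_\epsilon$ --- is exactly where the gap is, and your proposal to ``slide $v$ along the wall and do a case-by-case inspection of $\tilde A_2,\tilde C_2,\tilde G_2$'' is left unexecuted. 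It is also not obviously compatible with simultaneously keeping the gonality at $v$ at least $3$, which is a second, independent constraint on the choice of $v$ in $\tilde C_2$ and $\tilde G_2$.

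The paper eliminates the case analysis by choosing $v$ differently. Take a gallery $(c=c_0,\ldots,c_{k-1},c_k)$ in the Coxeter complex $\Sigma$ starting at the base chamber $c$ such that $r_\gamma$ interchanges $c_{k-1}$ and $c_k$, with $k$ minimal. Since $o(sr_\gamma)=\infty$, $\gamma$ cannot be a simple root, so $k \geq 2$, and $c_{k-2},c_{k-1},c_k$ lie in a unique rank-$2$ residue $R$ whose vertex $v$ is on the wall of $\gamma$. Put $d := \proj_R c$. Minimality of $k$ forces the wall of $\gamma$ not to be a wall of $d$, since otherwise one could cross it after at most $k-1$ steps. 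This single observation produces both of the constraints you need at once: the gonality $m$ of $R$ is $\geq 3$ (in a gonality-$2$ residue every wall is a wall of every chamber), and if $(\alpha_1,\ldots,\alpha_m)$ is the natural ordering of the positive roots at $v$ with $\{d\}=\alpha_1\cap\alpha_m\cap R$, then $\gamma=\alpha_\ell$ with $1<\ell<m$. After that your Moufang-polygon step finishes the argument exactly as you describe; replacing the sliding step by this choice of $v$ closes the gap.
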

\begin{proof}
	We argue in the geometric realization of the Coxeter complex associated with $(W,S)$ in the euclidean plane (as visualized in \cite[Figures $2.1-2.3$]{We09}. Thus we think of the Coxeter complex $\Sigma$ as tessellation of the euclidean plane by chambers (i.e. the triangles). The walls of $\Sigma$ correspond to the lines and each wall determines two halfplanes and these correspond to  the roots of $\Sigma$. We choose a chamber $c$ and identify the set of fundamental reflections with the reflection of the euclidean plane whose wall is a wall of c. Moreover, the set of positive roots $\Phi_+$ is identified with the set of halfspaces that contain $c$. Let $s \in S$. By definition it suffices to show that $U_{\gamma} \subseteq U'$ for each $\gamma \in \Phi_+$, where $U':= \langle U_{\beta} \mid \beta \in \Phi_+, o(sr_{\beta}) <\infty \rangle$. Let $\gamma$ be a root in $\Phi_+$. If $o(sr_{\gamma}) < \infty$, then $U_{\gamma} \subseteq U'$ by the definition of $U'$. Thus, it remains to consider the case where $o(sr_{\gamma}) = \infty$. We consider a gallery $(c=c_0,...,c_{k-1},c_k)$ in $\Sigma$ such that $r_{\gamma}$ switches $c_{k-1}$ and $c_k$ and such that $k$ is minimal for this property. As $o(sr_{\gamma}) = \infty$, we have $k \geq 2$ and therefore a unique rank $2$ residue $R$ containing the chambers $c_{k-2},c_{k-1}$ and $c_k$. We put $d := \proj_R c$ and remark that the wall of $\gamma$ is not a wall of $d$ by the minimality of $k$. In particular, the gonality $m$ of the residue $R$ is at least $3$. The residue $R$ corresponds to a vertex $v$ in the geometric realization of $\Sigma$ and we let $\Phi_+^v$ denote the set of all positive roots having $v$ on their boundary. Let $\alpha$ and $\beta$ be the two roots in $\Phi_+^v$ such that $\{d\} = \alpha \cap \beta \cap R$. Then $\alpha \neq \gamma \neq \beta$ and we have a natural numbering $(\alpha = \alpha_1, \alpha_2, \ldots, \alpha_m = \beta)$ of $\Phi_+^v$ and $1 < \ell < m$ such that $\gamma = \alpha_{\ell}$. Furthermore, we have $o(sr_{\alpha_i}) < \infty$ for all $1 \leq i \leq m$ with $i \neq \ell$. Therefore we have $U_{\alpha_i} \subseteq U'$ for all $1 \leq i \leq m$ with $i \neq \ell$ by the previous case. Thus, it follows from the previous corollary that $U_{\gamma} \subseteq U'$. To show that $U_- = \langle U_{\beta} \mid \beta \in \Phi_-, o(sr_{\beta}) < \infty \rangle$ follows in a similar fashion.
\end{proof}

\begin{lemma}\label{Lemma: Isomorphic foundations implies wc iff wc}
	Let $\Delta = (\Delta_+, \Delta_-, \delta_*), \Delta' = (\Delta_+', \Delta_-', \delta_*')$ be two thick, $2$-spherical twin buildings of rank $\geq 3$, let $c\in \C_+, c' \in \C_+'$ and let $\phi: E_2(c) \to E_2(c')$ be an isometry. Then $\Delta$ is wall-connected if and only if $\Delta'$ is wall-connected.
\end{lemma}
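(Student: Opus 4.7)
By symmetry it suffices to prove the forward implication: if $\Delta$ is wall-connected, then $\Delta'$ is wall-connected. My plan is to extend $\phi$ to a full isomorphism $\Phi : \C \to \C'$ of twin buildings; once this is done, the fact that wall-connectedness is defined purely in terms of the distance function and the induced notion of projection means it transfers to $\Delta'$ without further work.

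First I would fix any chamber $c_- \in c^{\mathrm{op}}$ and produce a matching chamber $c_-' \in (c')^{\mathrm{op}}$ so that sending $c_- \mapsto c_-'$ extends $\phi$ to an isometry $\tilde\phi \colon E_2(c) \cup \{c_-\} \to E_2(c') \cup \{c_-'\}$. For every spherical rank-$2$ residue $R$ containing $c$, the chamber $z_R := \proj_R c_-$ is opposite to $c$ inside $R$ and lies in the twin apartment $A(c,c_-)$; the twin projection formula then gives $\delta_*(c_-, x) = r_{J_R}\,\delta(z_R, x)$ for every $x \in R$, where $J_R$ is the type of $R$. The family $\{z_R\}_R$ is coherent in the sense that for a shared panel $P \subseteq R_1 \cap R_2$ at $c$ one has $\proj_P z_{R_1} = \proj_P z_{R_2}$ (both being the unique non-$c$ chamber of $P \cap A(c,c_-)$). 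This coherence is entirely visible inside $E_2(c)$, so the image family $\{\phi(z_R)\}_R$ is analogously coherent at $c'$, and the existence/uniqueness of twin apartments through a pair of opposite chambers (\cite[Exercise~5.187]{AB08}) then produces a unique $c_-' \in (c')^{\mathrm{op}}$ with $\proj_{\phi(R)} c_-' = \phi(z_R)$ for every $R$. The projection formula applied on both sides forces $\tilde\phi$ to preserve $\delta$.

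With $\tilde\phi$ in hand, the hypothesis that $\Delta$ is wall-connected lets me invoke Corollary \ref{Cor: Main result} to extend $\tilde\phi$ to an isometry $\Phi \colon \C \to \C'$. Since $\Phi$ is a bijection preserving $\delta$, it induces an isomorphism of graphs $\Gamma_s(d) \cong \Gamma_s(\Phi(d))$ for every $(d,s) \in \C \times S$, so connectedness of every $\Gamma_s(d)$ in $\Delta$ transfers to connectedness of every $\Gamma_s(d')$ in $\Delta'$. (Equivalently, Lemma \ref{Lemma:Wall-connected survives under isometries} can be applied to $\Phi|_{\C_+}$ and each admissible pair $(d_-, \Phi(d_-))$, together with the symmetric statement for $\Phi|_{\C_-}$.)

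The main obstacle is the first step, namely the construction of $c_-'$. The delicate point is to verify that the image under the local isometry $\phi$ of the rank-$2$ trace of a twin apartment of $\Delta$ at $c$ is again the rank-$2$ trace of some twin apartment of $\Delta'$ at $c'$. Once this is isolated as the coherence identity at shared panels displayed above, the remainder is a routine application of the machinery of Section~5 together with the extension theorem encapsulated in Corollary \ref{Cor: Main result}.
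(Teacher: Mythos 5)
Your overall strategy matches the paper's: find an admissible chamber $c_-'$ so that $\phi$ extends to an isometry $E_2(c)\cup\{c_-\}\to E_2(c')\cup\{c_-'\}$, invoke Corollary \ref{Cor: Main result} to get a global isometry, and then transfer wall-connectedness via Lemma \ref{Lemma:Wall-connected survives under isometries}. The difference --- and the gap --- is in the first step. You attempt to produce $c_-'$ directly from the coherent family $\{\phi(z_R)\}_R$, but the reference you give for its existence (\cite[Exercise~5.187]{AB08}) only gives the unique twin apartment \emph{once a pair of opposite chambers is already in hand}; it does not say that a coherent collection of projections on rank-$2$ residues through $c'$ is realized by some chamber opposite $c'$. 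Coherence on shared panels is a necessary condition, but its sufficiency is a genuine theorem (it needs rank $\geq 3$, $2$-sphericity and thickness, and is close in spirit to the ``local approach'' rigidity of Tits). That is exactly what the paper outsources to \cite[Proposition~7.1.6]{WeDiss21}, and it is the load-bearing input of the whole lemma. As written, your argument replaces the citation with a restatement of what needs to be proved, so this step is not justified.

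The rest is fine. ``By symmetry'' is a legitimate reduction: $\phi^{-1}$ is again a local isometry, and Corollary \ref{Cor: Main result} can be applied from whichever side is assumed wall-connected, which is why the paper writes out the two directions (each direction invokes the corollary on the side where wall-connectedness holds). Once a global isometry $\Phi$ is available, either of your two ways of transferring wall-connectedness --- directly through the induced graph isomorphisms $\Gamma_s(d)\cong\Gamma_s(\Phi(d))$, or through Lemma \ref{Lemma:Wall-connected survives under isometries} applied to the restrictions of $\Phi$ with suitable admissible pairs --- is valid. So the gap is isolated to the construction of $c_-'$; supplying the reference \cite[Proposition~7.1.6]{WeDiss21} (or an actual proof of that statement) would close it.
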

\begin{proof}
	By \cite[Proposition $7.1.6$]{WeDiss21} there exist chambers $d \in c^{\mathrm{op}}$ and $d' \in c'^{\mathrm{op}}$ such that the mapping $d \to d'$ extends the isometry $\phi$ to an isometry $\psi: E_2(c) \cup \{d\} \to E_2(c') \cup \{d'\}$. If $\Delta$ is wall-connected, then this isometry extends to an isometry of the whole twin buildings by Corollary \ref{Cor: Main result}. Now the claim follows from Lemma \ref{Lemma:Wall-connected survives under isometries}. If $\Delta'$ is wall-connected, then the isometry $\psi^{-1}$ extends to an isometry of the whole twin buildings. Again, Lemma \ref{Lemma:Wall-connected survives under isometries} implies that $\Delta$ is wall-connected, too.
\end{proof}

\begin{convention}
	We label the diagrams $\tilde{C}_2$ and $\tilde{G}_2$ in a linear order by $1, 2, 3$ such that $o(s_1 s_2) = 3$ in the case of $\tilde{G}_2$.
\end{convention}

\begin{lemma}\label{Lemma: RtoR extends to foundation}
	Let $\Delta, \Delta'$ be two twin buildings of the same type $\tilde{C}_2$ (resp. $\tilde{G}_2$). Suppose that the $\{s_1, s_2\}$-residues of $\Delta$ and $\Delta'$ are isomorphic to the building associated with $C_2(2)$ (resp. $A_2(2)$ or $A_2(3)$). Let $c\in \Delta, c' \in \Delta'$ be chambers and let $R$ and $R'$ denote the $\{s_2, s_3\}$-residues containing $c$ and $c'$ respectively. Then each isometry from $R$ to $R'$ extends to an isometry from $E_2(c)$ to $E_2(c')$.
\end{lemma}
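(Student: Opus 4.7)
The plan is to build $\Psi: E_2(c) \to E_2(c')$ (with $\phi(c) = c'$, as the statement presupposes) piece by piece on the three rank $2$ residues $R_{23}(c) = R$, $R_{12}(c)$, and $R_{13}(c)$ containing $c$, then glue them along the three panels $\P_{s_i}(c)$ for $i \in \{1,2,3\}$. On $R$ the map is already $\phi$, so I only need to construct isometries $\Phi_{12}: R_{12}(c) \to R_{12}(c')$ extending $\phi|_{\P_{s_2}(c)}$ and $\Phi_{13}: R_{13}(c) \to R_{13}(c')$ extending $\phi|_{\P_{s_3}(c)}$, subject to the compatibility $\Phi_{12}|_{\P_{s_1}(c)} = \Phi_{13}|_{\P_{s_1}(c)}$.

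For $\Phi_{12}$, the hypothesis gives $R_{12}(c) \cong R_{12}(c') \cong B$ for some small polygon $B \in \{C_2(2), A_2(2), A_2(3)\}$, so the task reduces to showing that any isometry $\sigma: P \to P'$ between $s_2$-panels $P \subset R_{12}(c)$ and $P' \subset R_{12}(c')$ extends to an isometry $R_{12}(c) \to R_{12}(c')$. Fixing an isomorphism between these two copies of $B$, the question reduces to two facts about $\Aut(B)$: transitivity on $s_2$-panels (flag-transitivity), and that the setwise stabilizer of an $s_2$-panel induces the full symmetric group on its chambers. For $A_2(q)$ with $q \in \{2,3\}$, the stabilizer of a line in $\mathrm{PGL}(3,q)$ induces $\mathrm{PGL}(2,q) \in \{S_3,S_4\}$ on the $q+1$ chambers of that line. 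For $C_2(2)$, under the exceptional isomorphism $\mathrm{PSp}(4,2) \cong S_6$, the stabilizer of a line has order $48$ and its image in the symmetric group on the three chambers of the line is directly verified to be $S_3$.

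For $\Phi_{13}$, since $m_{s_1 s_3} = 2$ in both $\tilde{C}_2$ and $\tilde{G}_2$, the residue $R_{13}(c)$ is a generalized digon, so the map $z \mapsto (\proj_{\P_{s_1}(c)} z, \proj_{\P_{s_3}(c)} z)$ yields an adjacency-respecting bijection $R_{13}(c) \to \P_{s_1}(c) \times \P_{s_3}(c)$, with $s_1$-adjacency corresponding to equality of the second coordinate and symmetrically for $s_3$. The isometries $\Phi_{12}|_{\P_{s_1}(c)}$ (from step 1) and $\phi|_{\P_{s_3}(c)}$ both send $c$ to $c'$; their product under the digon identification defines $\Phi_{13}$, which is automatically an isometry. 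Gluing $\phi$, $\Phi_{12}$, $\Phi_{13}$ along the three panels gives a well-defined bijection $\Psi: E_2(c) \to E_2(c')$; an appeal to Lemma \ref{adjazenzerhaltendeBijektionIsometrie} within each residue, together with the observation that any two adjacent chambers of $E_2(c)$ lie in a common rank $2$ residue, upgrades this to an isometry. The main obstacle is the panel-stabilizer claim for the three small polygons; this is classical but genuinely depends on the specific identification of the $\{s_1, s_2\}$-residues, which explains why the hypothesis pinpoints exactly these cases.
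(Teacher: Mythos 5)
Your proposal is correct and follows essentially the same route as the paper: the paper's proof consists of exactly the ``Observation'' you identify (panel stabilizers in $A_2(2)$, $A_2(3)$, $C_2(2)$ induce the full symmetric group on the panel), the extension of $\phi|_{\P_{s_2}(c)}$ to an isometry of the $\{s_1,s_2\}$-residue, and then ``the claim follows.'' You have merely spelled out the two steps the paper compresses into that last phrase — the automatic extension across the generalized digon $R_{\{s_1,s_3\}}(c)$ (using $m_{s_1s_3}=2$) and the gluing along panels via Lemma~\ref{adjazenzerhaltendeBijektionIsometrie} — together with a brief justification of the Observation itself.
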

\begin{proof}
	We shall need the following elementary observation: 
	
	\noindent \textbf{Observarion:} Let $\Gamma$ be one of the buildings associated with $A_2(2), A_2(3)$ or $C_2(2)$ and let $P$ be a panel of $\Gamma$. Then the stabilizer of $P$ in the full automorphism group of $\Gamma$ induces all permutations on the set of chambers in $P$.
	
	The isometry $\phi: R\to R'$ induces an isometry $\P_{s_2}(c) \to \P_{s_2}(c')$. By the observation there exists an isometry $\psi: R_{\{s_1, s_2\}}(c) \to R_{\{s_1, s_2\}}(c')$ as both residues are isomorphic to the building associated to one of the groups $A_2(2), A_2(3), C_2(2)$. The claim follows.
\end{proof}

\begin{lemma}\label{Lemma: C2tilde wall-connected}
	Let $\Delta$ be a twin building of type $\tilde{C}_2$ such that the $\{ s_1, s_2 \}$-residues are isomorphic to the buildings associated with $C_2(2)$. Then $\Delta$ is wall-connected.
\end{lemma}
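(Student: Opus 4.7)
The plan is to transfer wall-connectedness from an explicit affine Moufang model, using the foundation-extension lemmas established immediately above.

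First I would exhibit an RGD-system $\mathcal{D}_0$ of type $\tilde{C}_2$ whose associated twin building $\Delta_0 := \Delta(\mathcal{D}_0)$ has $\{s_1, s_2\}$-residues isomorphic to $C_2(2)$. A natural candidate is the affine Kac--Moody RGD-system of type $\tilde{C}_2$ over $\mathbb{F}_2$ (i.e.\ associated with $\widetilde{\mathrm{Sp}}_4(\mathbb{F}_2)$), where all rank-$2$ residues are generalized quadrangles of order $(2,2)$. Since $\mathcal{D}_0$ is an RGD-system of irreducible affine type and rank $3$, Proposition~\ref{affinerank3} shows that $\mathcal{D}_0$ is wall-connected, and Corollary~\ref{Corollary: D wc iff Delta(D) wc} then yields that $\Delta_0$ is wall-connected.

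Next I would reduce to an isometry of rank-$2$ residues. Pick any chambers $c \in \C$ and $c_0 \in \C_0$, and let $R \ni c$ and $R_0 \ni c_0$ denote the $\{s_2, s_3\}$-residues. Since $\Delta$ is a thick, irreducible, $2$-spherical twin building of rank $\geq 3$, it is Moufang by the standard classification, so $R$ is a Moufang generalized quadrangle. The $s_2$-panels of $R$ are also $s_2$-panels of the $\{s_1, s_2\}$-residue of $c$ and hence have exactly $3$ chambers; by the Tits--Weiss classification of Moufang quadrangles with a side of order $2$, this forces $R$ to be isomorphic to $C_2(2)$. The same conclusion holds for $R_0$, so there exists a building isomorphism $\phi\colon R \to R_0$, which is in particular an isometry between residues of the two twin buildings.

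Now I would invoke the two preparatory lemmas in order. By Lemma~\ref{Lemma: RtoR extends to foundation}, applied to $\Delta, \Delta_0$ of type $\tilde{C}_2$ (both of whose $\{s_1, s_2\}$-residues are $C_2(2)$ by construction), the isometry $\phi\colon R \to R_0$ extends to an isometry $\psi\colon E_2(c) \to E_2(c_0)$. Finally, Lemma~\ref{Lemma: Isomorphic foundations implies wc iff wc} applied to $\psi$ shows that $\Delta$ is wall-connected if and only if $\Delta_0$ is; since the model $\Delta_0$ is wall-connected, so is $\Delta$.

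The main obstacle is the classification input in the second paragraph: one must know both that every thick $2$-spherical twin building of rank $\geq 3$ is Moufang (so that the classification of Moufang quadrangles applies at all) and that a Moufang quadrangle with one side of order $2$ must be $C_2(2)$, which rules out any mixed-type quadrangle arising as an $\{s_2, s_3\}$-residue of $\Delta$. Once both rank-$2$ residues of $\Delta$ (and of $\Delta_0$) are identified as $C_2(2)$, the rest of the argument is a mechanical application of the transfer machinery already in the paper.
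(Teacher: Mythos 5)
Your overall strategy is the same as the paper's: produce an affine RGD-model, transport wall-connectedness via Proposition~\ref{affinerank3} and Corollary~\ref{Corollary: D wc iff Delta(D) wc}, extend a rank-$2$ isometry to an $E_2$-isometry via Lemma~\ref{Lemma: RtoR extends to foundation}, and finish with Lemma~\ref{Lemma: Isomorphic foundations implies wc iff wc}. However, there is a concrete factual gap in your second paragraph.

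You claim that the Tits--Weiss classification forces the $\{s_2, s_3\}$-residue $R$ to be isomorphic to $C_2(2)$ once it is Moufang with $s_2$-panels of size $3$. This is false: a thick Moufang quadrangle with one side of order $2$ has order $(2,2)$ \emph{or} $(2,4)$, the latter being the unique Moufang quadrangle of order $(2,4)$ (associated with $U_4(2)$). The hypothesis only pins down the $s_2$-panels, since only those are shared with the $\{s_1, s_2\}$-residue; the $s_3$-panels are unconstrained. The paper therefore explicitly distinguishes the two cases, and in the $(2,4)$ case your chosen model $\mathcal{D}_0$ over $\mathbb{F}_2$ is of the wrong local type: there is no isometry $R \to R_0$ to feed into Lemma~\ref{Lemma: RtoR extends to foundation}. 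The paper avoids this by appealing to (the proof of) \cite[Proposition~$4$]{MvM98}, which supplies an RGD-system of type $\tilde{C}_2$ with the correct rank-$2$ residues in \emph{either} case, together with Lemma~\ref{Lemma: RtoR extends to foundation} to upgrade the residue isomorphism to an $E_2$-isometry. Your proof would become correct if, in the $(2,4)$ case, you replaced $\mathcal{D}_0$ by an RGD-system whose $\{s_2,s_3\}$-residues are the $(2,4)$ quadrangle (e.g.\ as furnished by \cite{MvM98}), and then ran the same transfer argument; as written, the case is simply missed.
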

\begin{proof}
	The $\{ s_2, s_3 \}$-residues are Moufang quadrangles by \cite[$(8.3)$ Theorem $4$]{Ro00} and since the $s_2$-panels have to contain precisely $3$ chambers, the $\{s_2, s_3\}$-residues are all isomorphic to $C_2(2)$ or they are all isomorphic to the unique Moufang quadrangle of order $(2, 4)$. Let $c$ be a chamber of $\Delta$. By (the proof of) \cite[Proposition $4$]{MvM98} and Lemma \ref{Lemma: RtoR extends to foundation}, there exists in both cases an RGD-system $\mathcal{D}$ of type $\tilde{C}_2$, a chamber $c'$ of $\Delta(\mathcal{D})$ and an isometry $\varphi: E_2(c) \to E_2(c')$. Now the claim follows from Proposition \ref{affinerank3}, Corollary \ref{Corollary: D wc iff Delta(D) wc} and Lemma \ref{Lemma: Isomorphic foundations implies wc iff wc}.
\end{proof}

\begin{lemma}\label{Lemma: G2tilde wall-connected}
	Let $\Delta$ be a twin building of type $\tilde{G}_2$ such that the $\{ s_2, s_3 \}$-residues are isomorphic to the building associated with $G_2(2)$ or $G_2(3)$. Then $\Delta$ is wall-connected.
\end{lemma}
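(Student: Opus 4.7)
The plan is to run exactly the argument of Lemma \ref{Lemma: C2tilde wall-connected} in the $\tilde{G}_2$ setting. First I would pin down the possible rank-$2$ residues of $\Delta$. By \cite[$(8.3)$ Theorem $4$]{Ro00} every rank-$2$ residue of $\Delta$ is a Moufang polygon. Under our labelling convention, the $\{s_2,s_3\}$-residues are Moufang hexagons and are isomorphic to $G_2(2)$ or $G_2(3)$ by hypothesis; in both cases the $s_2$-panels carry $3$ or $4$ chambers respectively. Since $o(s_1s_2)=3$, the $\{s_1,s_2\}$-residues are Moufang projective planes whose $s_2$-panels have the same cardinality, and hence they must be isomorphic to $A_2(2)$ or $A_2(3)$. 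This matches the hypothesis of Lemma \ref{Lemma: RtoR extends to foundation} for type $\tilde{G}_2$.

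Next, fix a chamber $c\in\Delta$ and let $R$ be the $\{s_2,s_3\}$-residue through $c$. As in the $\tilde{C}_2$ case, I would invoke (the proof of) \cite[Proposition $4$]{MvM98} to produce an RGD-system $\mathcal{D}$ of type $\tilde{G}_2$ and a chamber $c'\in\Delta(\mathcal{D})$ together with an isometry from $R$ to the $\{s_2,s_3\}$-residue $R'$ through $c'$; the point is that foundations of affine twin buildings of rank $3$ can always be realized by a Moufang (hence RGD) twin building. Lemma \ref{Lemma: RtoR extends to foundation} then extends this isometry to an isometry $\varphi\colon E_2(c)\to E_2(c')$.

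Finally, since $\mathcal{D}$ has irreducible affine type and rank $3$, Proposition \ref{affinerank3} gives that $\mathcal{D}$ is wall-connected, and Corollary \ref{Corollary: D wc iff Delta(D) wc} translates this into wall-connectedness of $\Delta(\mathcal{D})$. The isometry $\varphi$, combined with Lemma \ref{Lemma: Isomorphic foundations implies wc iff wc}, then transfers the property to $\Delta$.

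The main obstacle is, as in the $\tilde{C}_2$ case, the appeal to \cite[Proposition $4$]{MvM98}: one has to confirm that its conclusion applies here, i.e.\ that any foundation of type $\tilde{G}_2$ consisting of the Moufang hexagons $G_2(2)$ or $G_2(3)$ together with the planes $A_2(2)$ or $A_2(3)$ (glued compatibly along the common $s_2$-panel) is realized by an RGD-system. Spiritually this is identical to the $\tilde{C}_2$ argument of the previous lemma and reduces to a finite check of foundations, but it is the only non-formal step in the chain.
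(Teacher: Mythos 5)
Your proposal follows essentially the same route as the paper: deduce via \cite[$(8.3)$ Theorem $4$]{Ro00} that the $\{s_1,s_2\}$-residues are $A_2(2)$ or $A_2(3)$, invoke (the proof of) \cite[Proposition $4$]{MvM98} together with Lemma \ref{Lemma: RtoR extends to foundation} to obtain an RGD-system $\mathcal{D}$ of type $\tilde{G}_2$ and an isometry $E_2(c) \to E_2(c')$, and then conclude via Proposition \ref{affinerank3}, Corollary \ref{Corollary: D wc iff Delta(D) wc} and Lemma \ref{Lemma: Isomorphic foundations implies wc iff wc}. This is exactly the paper's argument transposed from the $\tilde{C}_2$ case, as intended.
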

\begin{proof}
	The $\{ s_1, s_2 \}$-residues are Moufang planes by \cite[$(8.3)$ Theorem $4$]{Ro00} and since the panel contain precisely $3$ (resp. $4$) chambers, the $\{ s_1, s_2 \}$-residues are all isomorphic to the building associated with $A_2(2)$ (resp. $A_2(3)$). Let $c$ be a chamber in $\Delta$. By (the proof of) \cite[Proposition $4$]{MvM98} and Lemma \ref{Lemma: RtoR extends to foundation} there exists an RGD-system $\mathcal{D}$ of type $\tilde{G}_2$, a chamber $c'$ in $\Delta(\mathcal{D})$ and an isometry $\varphi: E_2(c) \to E_2(c')$. Now the claim follows from Proposition \ref{affinerank3}, Corollary \ref{Corollary: D wc iff Delta(D) wc} and Lemma \ref{Lemma: Isomorphic foundations implies wc iff wc}.
\end{proof}

\begin{theorem}
	Let $\Delta$ be a thick irreducible twin building of affine type $(W, S)$ and of rank $3$. Then $\Delta$ is wall-connected.
\end{theorem}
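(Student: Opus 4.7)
The plan is to carry out a case analysis over the three irreducible affine Coxeter diagrams of rank $3$, namely $\tilde{A}_2$, $\tilde{C}_2$, and $\tilde{G}_2$. In each case I will either verify that Condition $\co$ holds, so that Corollary \ref{coimplieswc} immediately gives wall-connectedness, or I will invoke one of the dedicated Lemmas \ref{Lemma: C2tilde wall-connected} or \ref{Lemma: G2tilde wall-connected} that precisely cover the problematic affine rank $3$ situations.

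For type $\tilde{A}_2$, every rank $2$ residue is a Moufang plane (a generalized $3$-gon), and no such building is isomorphic to $B_2(2)$, $G_2(2)$, $G_2(3)$, or ${}^2F_4(2)$. By \cite[Corollary $1.7$]{MR95}, Condition $\co$ therefore holds, and Corollary \ref{coimplieswc} yields wall-connectedness. For type $\tilde{C}_2$ the rank $2$ residues are generalized quadrangles; if none is isomorphic to $B_2(2) = C_2(2)$ the same argument via Condition $\co$ and Corollary \ref{coimplieswc} applies. Otherwise, using the diagram automorphism $s_1 \leftrightarrow s_3$ of $\tilde{C}_2$, I may assume that the $\{s_1, s_2\}$-residues are isomorphic to $C_2(2)$ and invoke Lemma \ref{Lemma: C2tilde wall-connected}. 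For type $\tilde{G}_2$, with the convention $o(s_1 s_2) = 3$, the $\{s_1, s_2\}$-residues are Moufang planes (never exceptional) and the $\{s_2, s_3\}$-residues are Moufang hexagons. If these hexagons are not isomorphic to $G_2(2)$ or $G_2(3)$, then Condition $\co$ holds and Corollary \ref{coimplieswc} applies; otherwise, Lemma \ref{Lemma: G2tilde wall-connected} does.

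The main technical obstacle is not really present in this final theorem, since the combinatorial case split is straightforward and the hard work has already been done in the preparatory lemmas (in particular the reduction via \cite[Proposition $4$]{MvM98} and Lemma \ref{Lemma: RtoR extends to foundation} to a Moufang twin building coming from an RGD-system, together with Proposition \ref{affinerank3} and Corollary \ref{Corollary: D wc iff Delta(D) wc}). What must be checked explicitly is only that the list of exceptional rank $2$ residues causing $\co$ to fail is compatible with the gonalities available in each affine diagram, and that the symmetry $s_1 \leftrightarrow s_3$ of $\tilde{C}_2$ legitimately lets one apply the asymmetrically stated Lemma \ref{Lemma: C2tilde wall-connected}.
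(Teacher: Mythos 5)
Your proposal follows essentially the same route as the paper's proof: the case analysis reduces to checking whether an exceptional rank $2$ residue ($C_2(2)$, $G_2(2)$, or $G_2(3)$) occurs, invoking Corollary \ref{coimplieswc} via Condition $\co$ when none does, and falling back on Lemmas \ref{Lemma: C2tilde wall-connected} and \ref{Lemma: G2tilde wall-connected} otherwise. The only organizational difference is that you walk through the three affine diagrams $\tilde{A}_2$, $\tilde{C}_2$, $\tilde{G}_2$ explicitly and spell out the $s_1 \leftrightarrow s_3$ diagram symmetry of $\tilde{C}_2$, whereas the paper packages that normalization into a citation of \cite[Corollary $5.157$]{AB08}; also note that the relevant statement in \cite{MR95} yielding Condition $\co$ itself (rather than merely the extension theorem) is found in Section $1$ there, not in Corollary $1.7$ per se.
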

\begin{proof}
	If there is no rank $2$ residue of $\Delta$ which is isomorphic to $C_2(2), G_2(2)$ or $G_2(3)$, then $\Delta$ satisfies Condition $\co$ by \cite[Section $1$]{MR95} and is therefore wall-connected by Corollary \ref{coimplieswc}. If there is a residue isomorphic to $C_2(2)$, then $\Delta$ is wall-connected by \cite[Corollary $5.157$]{AB08} and Lemma \ref{Lemma: C2tilde wall-connected} and if there is a residue isomorphic to $G_2(2)$ or $G_2(3)$, then $\Delta$ is wall-connected by \cite[Corollary $5.157$]{AB08} and Lemma \ref{Lemma: G2tilde wall-connected}.
\end{proof}

\bibliography{references}
\bibliographystyle{abbrv}

\end{document}